\theoremstyle{definition}
\newtheorem{definition}{Definition}[section]
\newtheorem{remark}[definition]{Remark}
\newtheorem{example}[definition]{Example}
\theoremstyle{plain}
\newtheorem{lemma}[definition]{Lemma}
\newtheorem{theorem}[definition]{Theorem}
\newtheorem{conjecture}[definition]{Conjecture}
\newtheorem{problem}[definition]{Problem}
\begin{document}

\title{Algebras, Dialgebras, and Polynomial Identities}

\author{Murray R. Bremner}

\address{Department of Mathematics and Statistics, University of Saskatchewan, Canada}

\email{bremner@math.usask.ca}

\dedicatory{Dedicated to Yuri Bahturin on his 65th birthday}

\subjclass[2010]{Primary 17A30. Secondary 16R10, 17-08, 17A32, 17A40, 17A50, 17B60, 17C05, 17D05, 17D10.}

\keywords{Algebras, triple systems, dialgebras, triple disystems,
polynomial identities, multilinear operations, computer algebra}

\thanks{This paper is an expanded version of the lecture notes from the author's talk at
the Second International Workshop on Polynomial Identities, 2--6 September 2011,
which took place at the Atlantic Algebra Centre, Memorial University, St.~John's,
Newfoundland, Canada.}

\maketitle

\begin{abstract}
This is a survey of some recent developments in the theory of associative and nonassociative dialgebras,
with an emphasis on polynomial identities and multilinear operations.
We discuss associative, Lie, Jordan, and alternative algebras, and the corresponding dialgebras;
the KP algorithm for converting identities for algebras into identities for dialgebras;
the BSO algorithm for converting operations in algebras into operations in dialgebras; Lie and
Jordan triple systems, and the corresponding disystems; and a noncommutative version of Lie triple
systems based on the trilinear operation $abc-bca$.  The paper concludes with
a conjecture relating the KP and BSO algorithms, and some suggestions for further research.
Most of the original results are joint work with Ra\'ul Felipe,
Luiz A. Peresi, and Juana S\'anchez-Ortega.
\end{abstract}


\section{Algebras}

Throughout this talk the base field $\mathbb{F}$ will be arbitrary, but we usually exclude low characteristics,
especially $p \le n$ where $n$ is the degree of the polynomial identities under consideration.
The assumption $p > n$ allows us to assume that all polynomial identities are multilinear and that the group
algebra $\mathbb{F} S_n$ is semisimple.

\begin{definition}
An \textbf{algebra} is a vector space $A$ with a bilinear operation
  \[
  \mu\colon A \times A \to A.
  \]
Unless otherwise specified, we write $ab = \mu(a,b)$ for $a, b \in A$. We say
that $A$ is \textbf{associative} if it satisfies the polynomial identity
  \[
  (ab)c \equiv a(bc).
  \]
Throughout this paper we will use the symbol $\equiv$ to indicate an equation that holds for all values of the
arguments; in this case, all $a, b, c \in A$.
\end{definition}

\begin{theorem}
The free unital associative algebra on a set $X$ of generators has basis consisting of all words of degree $n \ge 0$,
  \[
  x = x_1 x_2 \cdots x_n,
  \quad \text{where} \quad
  x_1, x_2, \dots, x_n \in X,
  \]
with the product defined on basis elements by concatenation and extended bilinearly,
  \[
  ( x_1 x_2 \cdots x_m ) ( y_1 y_2 \cdots y_n )
  =
  x_1 x_2 \cdots x_m y_1 y_2 \cdots y_n.
  \]
\end{theorem}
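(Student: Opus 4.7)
The plan is to construct explicitly the candidate algebra $W(X)$ whose underlying vector space is freely generated by the set of all finite words in the alphabet $X$ (including the empty word, written $1$), to equip it with the concatenation product, and then to verify that this $W(X)$ enjoys the universal property characterizing the free unital associative algebra on $X$. Uniqueness of the free object (up to unique isomorphism in the category of unital associative algebras) will then identify the free algebra with $W(X)$ and yield the claimed basis.

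First I would check that $W(X)$ is a unital associative algebra. For any three basis words $u = x_1 \cdots x_m$, $v = y_1 \cdots y_n$, and $w = z_1 \cdots z_p$, both $(uv)w$ and $u(vw)$ are literally the single word $x_1 \cdots x_m y_1 \cdots y_n z_1 \cdots z_p$, so associativity holds on the basis and extends to arbitrary elements by bilinearity. The empty word $1$ clearly satisfies $1 \cdot u = u \cdot 1 = u$ on basis elements, hence it is a two-sided identity.

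Next I would verify the universal property. Given any unital associative $\mathbb{F}$-algebra $A$ and any set map $f\colon X \to A$, define $\tilde f\colon W(X) \to A$ on basis elements by $\tilde f(1) = 1_A$ and $\tilde f(x_1 \cdots x_n) = f(x_1) \cdots f(x_n)$, extended by linearity. Associativity in $A$ ensures that $\tilde f$ carries concatenation of words to multiplication in $A$, so $\tilde f$ is an algebra homomorphism extending $f$. Any such homomorphism must send a word to the corresponding product of its letters in $A$, which forces uniqueness of $\tilde f$.

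I do not anticipate a serious obstacle: the only potentially delicate issue is the linear independence of distinct words, but this is built into the construction, since $W(X)$ is defined as the vector space \emph{freely generated by} the set of words. The universal property just verified then pins down the free unital associative algebra as isomorphic to $W(X)$ under the map that is the identity on $X$, and the words form a basis of the target. The real content of the argument is the observation that concatenation of strings is strictly associative on the nose; everything else is bookkeeping.
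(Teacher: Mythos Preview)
Your proposal is correct and is the standard argument: construct the word algebra with concatenation, verify associativity and the unit directly on basis elements, and then check the universal property to identify it with the free object. The paper itself states this theorem without proof, treating it as a well-known foundational fact, so there is no ``paper's own proof'' to compare against; your write-up would serve perfectly well as the omitted justification.
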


\begin{definition}
The \textbf{commutator} in an algebra is the bilinear operation
  \[
  [a,b] = ab - ba.
  \]
This operation is \textbf{anticommutative}: it satisfies $[a,b] + [b,a] \equiv 0$.
\end{definition}

\begin{lemma}
In an associative algebra, the commutator satisfies the identity
  \[
  [[a,b],c] + [[b,c],a] + [[c,a],b] \equiv 0
  \qquad
  \emph{(Jacobi)}
  \]
\end{lemma}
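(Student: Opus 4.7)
The plan is to expand each double commutator into a sum of four associative products, then exploit associativity to drop parentheses and observe that the resulting twelve monomials cancel in pairs. Since all the brackets in the Jacobi expression are defined purely in terms of $\mu$, nothing beyond the definition of $[\,,\,]$ and the associative law is needed.

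Concretely, I would first expand $[[a,b],c] = (ab-ba)c - c(ab-ba)$ and rewrite the right-hand side using associativity as $abc - bac - cab + cba$ (here I am using the theorem above which allows me to treat associative products as unambiguous words). Then I would perform the analogous expansion for $[[b,c],a]$ and $[[c,a],b]$ by cyclic substitution $a \mapsto b \mapsto c \mapsto a$. At this point the Jacobi sum becomes a formal $\mathbb{Z}$-linear combination of the six permutations of the word $abc$.

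Finally I would tabulate the coefficient of each of the six monomials $abc$, $acb$, $bac$, $bca$, $cab$, $cba$. Each monomial appears exactly twice among the twelve terms, once with coefficient $+1$ and once with coefficient $-1$; for example $abc$ comes from $(ab)c$ in the first bracket and from $-a(bc)$ in the second, and these are equal by associativity. Hence the sum is zero in the free associative algebra, and therefore in every associative algebra.

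There is no real obstacle here: the identity is a formal consequence of associativity via a short bookkeeping computation. The only thing to be slightly careful about is the tacit appeal to the previous theorem, which guarantees that once parentheses are removed on the strength of the associative law, the resulting words can legitimately be compared coefficient by coefficient.
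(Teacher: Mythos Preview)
Your proof is correct and is the standard direct computation: expand each double commutator into four associative monomials, use associativity to write all terms as words in the free monoid, and observe that each of the six permutations of $abc$ occurs once with each sign. The paper itself states this lemma without proof (it is treated as elementary background), so there is no alternative argument to compare against; your expansion is exactly what any reader would supply.
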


\begin{definition}
A \textbf{Lie algebra} is an algebra which satisfies anticommutativity and the Jacobi identity.
\end{definition}

\begin{theorem} \emph{\textbf{Poincar\'e-Birkhoff-Witt.}}
Every Lie algebra $L$ has a universal associative enveloping algebra $U(L)$ for which the canonical map $L \to U(L)$
is injective. It follows that every polynomial identity satisfied by the commutator in every associative algebra is
a consequence of anticommutativity and the Jacobi identity.
\end{theorem}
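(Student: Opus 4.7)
The plan is to prove the theorem in two stages: first construct the universal enveloping algebra and establish the PBW basis, then deduce the identity-theoretic consequence.

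First I would define $U(L) = T(L)/I$, where $T(L) = \bigoplus_{n \ge 0} L^{\otimes n}$ is the tensor algebra on the underlying vector space of $L$ and $I$ is the two-sided ideal generated by all elements $x \otimes y - y \otimes x - [x,y]$ for $x, y \in L$. A routine verification of the universal property shows that every linear map $\varphi\colon L \to A$ into an associative algebra $A$ satisfying $\varphi([x,y]) = \varphi(x)\varphi(y) - \varphi(y)\varphi(x)$ extends uniquely to an algebra homomorphism $U(L) \to A$; in particular this pins down $U(L)$ up to isomorphism.

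The core technical step is the \emph{PBW basis theorem}: after fixing a totally ordered basis $\{e_i\}_{i \in I}$ of $L$, the ordered monomials $e_{i_1} e_{i_2} \cdots e_{i_n}$ with $i_1 \le i_2 \le \cdots \le i_n$, together with the empty monomial $1$, form a basis of $U(L)$. Spanning is immediate by induction on length, using the defining relations to swap out-of-order adjacent factors at the cost of strictly shorter monomials. Linear independence is the hard part: I would construct a faithful representation of $U(L)$ on the vector space $V$ freely spanned by the symbols corresponding to ordered monomials, defining the action of each $e_j$ by prepending and then rewriting via the relations, and then verify directly that the operators so defined satisfy $e_j e_k - e_k e_j = [e_j, e_k]$. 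This verification is where the Jacobi identity enters in an essential way, and it is the main obstacle of the proof. Since $V$ has a basis labeled by distinct symbols and the action is faithful, the ordered monomials must be linearly independent in $U(L)$; in particular the degree-one monomials $e_i$ are independent, so the canonical map $L \to U(L)$ is injective.

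For the second statement, suppose $f(a_1,\dots,a_n)$ is a multilinear polynomial in the commutator $[\,,\,]$ that vanishes on every associative algebra when $[a,b]$ is interpreted as $ab-ba$. Let $L$ be the free Lie algebra on generators $x_1,\dots,x_n$, that is, the algebra presented by anticommutativity and Jacobi alone. By the first part, $L$ embeds into the associative algebra $U(L)$, and under this embedding the bracket of $L$ coincides with the associative commutator. Evaluating $f$ on $x_1,\dots,x_n \in L \subseteq U(L)$, the hypothesis forces $f(x_1,\dots,x_n)=0$ in $U(L)$ and hence in $L$. But an identity that vanishes on the free generators of the free Lie algebra must be a formal consequence of the defining relations, so $f$ is a consequence of anticommutativity and Jacobi, as required.
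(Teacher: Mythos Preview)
Your proposal is correct and follows the classical Birkhoff--Witt route: build $U(L)=T(L)/I$, prove the ordered-monomial basis by constructing a faithful module on the formal span of ordered words, and then deduce the identity-theoretic statement by evaluating in $U(L)$ for $L$ the free Lie algebra. The sketch is accurate; you correctly flag that well-definedness of the rewriting action (equivalently, that the operators satisfy $e_je_k-e_ke_j=[e_j,e_k]$) is where the Jacobi identity is actually used, and your deduction of the second assertion from injectivity is clean.

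The paper, however, does not give its own proof: it states the theorem and immediately defers to the literature, singling out Bergman's diamond-lemma argument via noncommutative Gr\"obner bases as the preferred approach. That method is genuinely different from yours. Rather than constructing a representation by hand, one orders the generators, takes the relations $e_je_i - e_ie_j - [e_j,e_i]$ for $j>i$ as a rewriting system, and checks that all overlap ambiguities $e_ke_je_i$ with $k>j>i$ resolve; the Jacobi identity is exactly what makes these ambiguities resolvable. Confluence then gives the ordered monomials as normal forms, hence a basis, and injectivity of $L\to U(L)$ drops out. What your approach buys is self-containment with no Gr\"obner machinery; what Bergman's buys is a uniform framework that the paper later reuses (e.g.\ for universal envelopes of Leibniz algebras and for the explicit computation of $U(C_2)$), so the reference is thematically apt for this survey even though no proof is written out.
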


\begin{remark}
Most texts on Lie algebras include a proof of the PBW Theorem.
The most beautiful proof is that of Bergman \cite{Bergman} using noncommutative Gr\"obner bases;
see also de Graaf \cite[Ch.~6]{deGraaf}.
For the history of the PBW Theorem, see Grivel \cite{Grivel}.
For a survey on Gr\"obner-Shirshov bases, see Bokut and Kolesnikov \cite{BokutKolesnikov}.
\end{remark}

\begin{definition}
The \textbf{anticommutator} in an algebra is the bilinear operation
  \[
  a \circ b = ab + ba;
  \]
we omit the scalar $\frac12$.
This operation is \textbf{commutative}: it satisfies $a \circ b - b \circ a \equiv 0$.
\end{definition}

\begin{lemma}
In an associative algebra, the anticommutator satisfies the identity
  \[
  ( ( a \circ a ) \circ b ) \circ a - ( a \circ a ) \circ ( b \circ a ) \equiv 0
  \qquad
  \emph{(Jordan)}
  \]
\end{lemma}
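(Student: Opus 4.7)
The plan is to verify the identity by brute expansion, using only the definition $x \circ y = xy + yx$ together with associativity of the underlying product. Since there is no clever structural observation available at this stage of the paper (the Jordan identity characterizes the image of the anticommutator in an essentially combinatorial way), direct computation is the natural route.

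First I would simplify the repeated factor: because the product is associative, $a \circ a = 2a^2$, where $a^2 = aa$. Then I would unfold the two sides one anticommutator at a time. For the first term,
\[
(a \circ a) \circ b = 2a^2 b + 2b a^2,
\]
and applying $\circ\, a$ once more gives
\[
\bigl((a \circ a) \circ b\bigr) \circ a = 2a^2 b a + 2 b a^3 + 2 a^3 b + 2 a b a^2,
\]
where associativity lets me collapse $a \cdot a^2$ and $a^2 \cdot a$ to the single monomial $a^3$. For the second term, I would first compute $b \circ a = ba + ab$ and then
\[
(a \circ a) \circ (b \circ a) = 2a^2(ba + ab) + 2(ba + ab)a^2 = 2 a^2 b a + 2 a^3 b + 2 b a^3 + 2 a b a^2.
\]

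At this point the two expressions are visibly equal as sums of noncommutative monomials in $a$ and $b$, so their difference is zero, establishing the identity. The only possible obstacle is bookkeeping: one must be careful that each of the four monomials $a^3 b$, $a^2 b a$, $a b a^2$, $b a^3$ appears with the same coefficient $2$ on both sides, and that no cross term is lost when expanding $(a \circ a) \circ (b \circ a)$. Since the identity is multilinear except for the repeated $a$, no characteristic assumption is needed beyond what is implicit in writing $a \circ a = 2a^2$.
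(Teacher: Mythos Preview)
Your direct expansion is correct: both $((a\circ a)\circ b)\circ a$ and $(a\circ a)\circ(b\circ a)$ reduce to $2a^3b + 2a^2ba + 2aba^2 + 2ba^3$, so the difference vanishes. The paper states this lemma without proof, treating it as a classical fact, so your brute-force verification is entirely appropriate and there is nothing to compare against.

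One small remark on your closing sentence: no characteristic assumption is actually needed. The equation $a\circ a = 2a^2$ is valid over any field (in characteristic~$2$ it simply says $a\circ a = 0$, making the identity trivial), and since you never divide by~$2$, the argument goes through unchanged.
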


\begin{definition}
A \textbf{Jordan algebra} is an algebra which satisfies commutativity and the Jordan identity.
\end{definition}

\begin{theorem}
There exist polynomial identities satisfied by the anticommutator in every associative algebra which do not follow
from commutativity and the Jordan identity.  The lowest degree in which such identities exist is 8.
\end{theorem}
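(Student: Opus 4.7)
The plan is to establish both halves of the statement separately: no special identity exists below degree $8$, and at least one does in degree $8$.

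For each $n$, let $J_n$ denote the multilinear part of degree $n$ of the free Jordan algebra on generators $x_1,\dots,x_n$, and let $SJ_n$ denote the corresponding multilinear part of the free special Jordan algebra, namely the subspace of $\mathbb{F}\langle x_1,\dots,x_n\rangle$ spanned by the multilinear Jordan monomials in $x_1,\dots,x_n$ under the anticommutator. Both are modules over $\mathbb{F}S_n$ by permutation of the variables. Because the anticommutator in any associative algebra satisfies commutativity and the Jordan identity, the universal property of the free Jordan algebra yields a surjective $\mathbb{F}S_n$-homomorphism $J_n \twoheadrightarrow SJ_n$; its kernel $K_n$ is exactly the space of multilinear special identities of degree $n$. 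The claim to prove becomes $K_n = 0$ for $n \le 7$ and $K_8 \neq 0$.

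For the vanishing in low degrees I would argue representation-theoretically. Under our hypothesis $p > n$, the group algebra $\mathbb{F}S_n$ is semisimple and both $J_n$ and $SJ_n$ decompose as sums of irreducibles $[\lambda]$ over partitions $\lambda \vdash n$. On the Jordan side one enumerates a spanning set of multilinear Jordan monomials, rewrites modulo commutativity and the Jordan identity, and applies Young idempotents to extract the multiplicity of each $[\lambda]$ in $J_n$. On the associative side $SJ_n$ sits inside $\mathbb{F}S_n \cong \bigoplus_\lambda M_{f_\lambda}(\mathbb{F})$ via the identification of a multilinear associative word in distinct letters with a permutation; expanding an iterated anticommutator in this basis and applying the same idempotents gives the multiplicity of $[\lambda]$ in $SJ_n$. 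One then checks that these Frobenius characters agree for every $\lambda \vdash n$ and every $n \le 7$; this is a finite but intricate computer algebra computation, and the combinatorial explosion in degree $7$ is the principal technical obstacle in this half of the proof.

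For the existence in degree $8$ I would exhibit Glennie's identity $G_8(x,y,z)$, a specific element of degree $8$ in the free Jordan algebra on three generators. Two verifications are needed. First, substituting the anticommutator for the Jordan product and expanding in $\mathbb{F}\langle x,y,z\rangle$, one checks that $G_8$ reduces to $0$, so $G_8$ holds in every special Jordan algebra. Second, one evaluates $G_8$ on a carefully chosen triple of elements of the Albert algebra $H_3(\mathbb{O})$ of Hermitian $3\times 3$ matrices over the octonions (an exceptional simple Jordan algebra) and obtains a nonzero value; since $H_3(\mathbb{O})$ satisfies commutativity and the Jordan identity, $G_8$ cannot be a consequence of those two identities. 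Multilinearizing $G_8$ yields a nonzero element of $K_8$, completing the argument. The real obstacle here is not the verification, which is a mechanical expansion, but the discovery of $G_8$ itself; historically this required considerable ingenuity and is the reason the answer to the degree question turned out to be $8$ rather than something smaller.
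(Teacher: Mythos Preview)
The paper does not actually prove this theorem; it states it and relegates the justification to the remark immediately following, which cites Glennie, Hentzel, Thedy, and McCrimmon for the existence of special identities and for the degree bound. Your proposal is a correct and reasonably detailed outline of the standard argument found in that literature: Glennie's identity supplies the existence in degree $8$; its nonvanishing on a suitable triple in the Albert algebra $H_3(\mathbb{O})$ shows it is not a consequence of commutativity and the Jordan identity; and a representation-theoretic dimension count comparing the multilinear components of the free Jordan algebra with those of the free special Jordan algebra, partition by partition, certifies that no special identity exists in degree $\le 7$. In that sense your write-up goes beyond what the paper itself offers and aligns with the sources it cites. One small point worth tightening: you should state explicitly why the multilinearization of $G_8$ remains a \emph{nonzero} element of $K_8$ --- under the characteristic hypothesis $p>8$ one recovers $G_8$ from its multilinearization by identifying variables, so the multilinearization cannot be a consequence of the Jordan identities either.
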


\begin{remark}
A Jordan algebra is called \emph{special} if it is isomorphic to a subspace of an associative algebra
closed under the anticommutator.
An polynomial identity for Jordan algebras is called \emph{special} if it is satisfied by all special
Jordan algebras but not by all Jordan algebras.
The first special identities for Jordan algebras were found by Glennie \cite{Glennie1,Glennie2}.
For a computational approach, see Hentzel \cite{Hentzel}.
Another s-identity was obtained by Thedy \cite{Thedy}; see also McCrimmon \cite{McCrimmon1}
and \cite[Appendix B.5]{McCrimmonBook}.
For a survey on identities in Jordan algebras, see McCrimmon \cite{McCrimmon2}.
\end{remark}

\begin{remark}
From the perspective of polynomial identities, there is a clear dichotomy between the two bilinear operations,
commutator and anticommutator.  Both operations satisfy simple identities in low degree; for the commutator, these
identities imply all the identities satisfied by the operation, but for the anticommutator, there exist special
identities of higher degree.
\end{remark}


\section{Dialgebras}

We now recall the concept of a dialgebra: a vector space with two multiplications.
Associative dialgebras were originally defined by Loday in the 1990s, and the results
quoted in this section were proved by him; see especially his original paper \cite{Loday1}
and his survey article \cite{Loday2}.  Associative dialgebras provide the natural setting
for Leibniz algebras, a ``non-anticommutative'' generalization of Lie algebras; see Loday
\cite{Loday3}.

\begin{definition}
A \textbf{dialgebra} is a vector space $A$ with two bilinear operations,
  \[
  \dashv \colon A \times A \to A,
  \qquad
  \vdash \colon A \times A \to A,
  \]
called the \textbf{left} and \textbf{right} products.
We say that $A$ is a \textbf{0-dialgebra} if it satisfies the \textbf{left} and \textbf{right bar identities},
  \[
  ( a \dashv b ) \vdash c \equiv ( a \vdash b ) \vdash c,
  \qquad
  a \dashv ( b \dashv c ) \equiv a \dashv ( b \vdash c ).
  \]
An \textbf{associative dialgebra} is a 0-dialgebra satisfying \textbf{left}, \textbf{right},
and \textbf{inner associativity}:
  \[
  ( a \dashv b ) \dashv c \equiv a \dashv ( b \dashv c ),
  \quad
  ( a \vdash b ) \vdash c \equiv a \vdash ( b \vdash c ),
  \quad
  ( a \vdash b ) \dashv c \equiv a \vdash ( b \dashv c ).
  \]
\end{definition}

\begin{definition}
Let $x = x_1 x_2 \cdots x_n$ be a monomial in an associative dialgebra,
with some placement of parentheses and choice of operations.  The \textbf{center}
of $x$, denoted $c(x)$, is defined by induction on $n$:
  \begin{itemize}
  \item
If $n = 1$ then $x = x_1$ and $c(x) = x_1$.
  \item
If $n \ge 2$ then $x = y \dashv z$ or $x = y \vdash z$,
and $c(x) = c(y)$ or $c(x) = c(z)$ respectively.
  \end{itemize}
\end{definition}

\begin{lemma} \label{lemmanormalform}
Let $x = x_1 x_2 \cdots x_n$ be a monomial in an associative dialgebra with $c(x) = x_i$. Then
the following expression does not depend on the placement of parentheses:
  \[
  x = x_1 \vdash \cdots \vdash x_{i-1} \vdash x_i \dashv x_{i+1} \dashv \cdots \dashv x_n.
  \]
\end{lemma}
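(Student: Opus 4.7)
The plan is to argue by induction on $n$, the length of the monomial. The cases $n=1$ and $n=2$ are immediate from the definition of the center. For the inductive step, write $x = y \star z$ at the top level, where $y$ involves $x_1,\dots,x_m$ and $z$ involves $x_{m+1},\dots,x_n$ for some $1 \le m < n$. By the definition of $c(\,\cdot\,)$: if $\star = \dashv$ then $c(x)=c(y)=x_i$ with $i \le m$; if $\star = \vdash$ then $c(x)=c(z)=x_i$ with $i > m$.

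Before the main induction, I would prove two auxiliary sub-lemmas that isolate the role of the bar identities.
\textbf{(A)} For any element $a$ and any monomial $w$ in variables $y_1,\dots,y_k$ (with arbitrary parenthesization and operations),
\[
a \dashv w \;=\; (\cdots((a \dashv y_1) \dashv y_2) \dashv \cdots) \dashv y_k.
\]
\textbf{(B)} Symmetrically, $w \vdash a = y_1 \vdash (y_2 \vdash (\cdots \vdash (y_k \vdash a)))$.
Sub-lemma (A) is proved by induction on $k$: writing $w = u \star' v$, the right bar identity replaces $\star'$ inside $a \dashv (u \star' v)$ by $\dashv$, left associativity then yields $a \dashv (u \dashv v) = (a \dashv u) \dashv v$, and the inductive hypothesis applied to $a \dashv u$ against $v$ and to $a$ against $u$ collapses the whole expression to the left-to-right $\dashv$-product. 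Sub-lemma (B) is dual, using the left bar identity and right associativity.

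Returning to the main induction with $\star = \dashv$ and $i \le m$, the inductive hypothesis replaces $y$ by the canonical left-to-right parenthesization $(\cdots((x_1 \vdash x_2) \vdash \cdots) \vdash x_i) \dashv x_{i+1}) \dashv \cdots) \dashv x_m$, and sub-lemma (A) with $a = y$ turns $y \dashv z$ into $(\cdots(y \dashv x_{m+1}) \dashv \cdots) \dashv x_n$; assembling these gives exactly the canonical parenthesization of the expression in the statement. The case $\star = \vdash$ is symmetric, using sub-lemma (B) and the inductive hypothesis on $z$. Parenthesization-independence of the displayed expression is then automatic: every parenthesization of $x_1 \vdash \cdots \vdash x_i \dashv \cdots \dashv x_n$ is itself a monomial whose center is $x_i$, so by what has just been proved it equals the chosen canonical form. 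The main obstacle will be bookkeeping across the layered inductions — the sub-lemmas recurse on $|w|$ and each step must invoke the inductive hypothesis on both sub-monomials of a split, while the main induction sits on top — and being careful that the single statement of the lemma is really doing two jobs at once (that the normal form is well-defined, and that $x$ equals it), both of which must be addressed.
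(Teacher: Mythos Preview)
Your proof is correct. The paper does not actually prove this lemma: the section on associative dialgebras is a summary of Loday's foundational results, and Lemma~\ref{lemmanormalform} is stated there without argument. Your inductive approach---isolating the effect of the bar identities in sub-lemmas (A) and (B), then running the main induction on the top-level split of the monomial---is the standard one and is essentially how Loday establishes the normal form in his original paper. The final observation, that any parenthesization of $x_1 \vdash \cdots \vdash x_i \dashv \cdots \dashv x_n$ is itself a monomial with center $x_i$ and hence reduces to the same canonical form, is the clean way to handle the well-definedness claim, and your short inductive verification of that fact is sound.
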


\begin{definition}
The expression in the Lemma \ref{lemmanormalform} is called the \textbf{normal form} of the monomial $x$,
and is abbreviated using the \textbf{hat notation}:
  \[
  x = x_1 \cdots \widehat{x}_i \cdots x_n.
  \]
\end{definition}

\begin{theorem}
The free associative dialgebra on a set $X$ of
generators has basis consisting of all monomials in normal form:
  \[
  x = x_1 \cdots \widehat{x}_i \cdots x_n
  \qquad
  (1 \le i \le n, \; x_1, x_2, \dots, x_n \in X).
  \]
Two such monomials are equal if and only if they have the same permutation of the generators and
the same position of the center.
The left and right products are defined on monomials as follows and extended bilinearly:
  \allowdisplaybreaks
  \begin{align*}
  x \dashv y
  &=
  ( x_1 \cdots \widehat{x}_i \cdots x_n ) \dashv ( y_1 \cdots \widehat{y}_j \cdots y_p )
  =
  x_1 \cdots \widehat{x}_i \cdots x_n y_1 \cdots y_p,
  \\
  x \vdash y
  &=
  ( x_1 \cdots \widehat{x}_i \cdots x_n ) \vdash ( y_1 \cdots \widehat{y}_j \cdots y_p )
  =
  x_1 \cdots x_n y_1 \cdots \widehat{y}_j \cdots y_p.
  \end{align*}
\end{theorem}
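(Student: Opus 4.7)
The proof has two parts: normal-form monomials span the free associative dialgebra and realize the stated multiplication rules, and distinct normal forms are linearly independent.

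For spanning, I would induct on length. A monomial of length $\ge 2$ factors as $u \star v$ with $\star \in \{\dashv,\vdash\}$, and by induction $u = a_1 \cdots \widehat{a}_i \cdots a_m$ and $v = b_1 \cdots \widehat{b}_j \cdots b_p$ in normal form. In the case $\star = \dashv$, alternating applications of the right bar identity (to convert each $\vdash$ inside $v$, now sitting to the right of a $\dashv$, into a $\dashv$) and of left associativity (to peel off the $b_k$'s one at a time) reduce $u \dashv v$ to a left-bracketed chain $((\cdots(u \dashv b_1) \dashv \cdots) \dashv b_p)$; appending the $b_k$'s to the tail of $u$ keeps the center at position $i$, giving $a_1 \cdots \widehat{a}_i \cdots a_m b_1 \cdots b_p$. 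The case $\star = \vdash$ is the left-right mirror: the left bar identity together with right and inner associativity absorbs the $a_k$'s into an initial string of $\vdash$'s, so that the surviving center shifts to $b_j$. Combined with Lemma \ref{lemmanormalform}, which already guarantees that the normal form is independent of the parenthesization used to evaluate it, this establishes both spanning and the multiplication formulas of the theorem.

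For linear independence, I would exhibit an explicit model. Let $D(X)$ be the vector space with basis the set of formal symbols $x_1 \cdots \widehat{x}_k \cdots x_n$ indexed by nonempty finite words over $X$ together with a designated center position, and define $\dashv$ and $\vdash$ on basis elements by the two concatenation formulas of the theorem, extended bilinearly. A direct check of each of the five defining identities on triples of basis elements shows that $D(X)$ is an associative dialgebra: the underlying word on both sides is the same, and a short case split on which of the three arguments contributes the surviving center verifies that the hat lands in the same position. The universal property of the free associative dialgebra then yields a dialgebra homomorphism to $D(X)$ sending generators to generators, and by the spanning argument it sends each normal-form monomial upstairs to the corresponding basis element of $D(X)$. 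Because the basis of $D(X)$ is linearly independent by construction, so is the spanning set in the free dialgebra.

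The main obstacle is the verification of the five dialgebra identities on $D(X)$. It is routine bookkeeping but must be carried out thoroughly, because this is precisely where the inductive definition of the center is shown to be consistent with the axioms: each identity requires a separate case split on which of the three arguments supplies the surviving hat on each side, and one must check that these two determinations always agree.
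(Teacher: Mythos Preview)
Your proof is correct and follows the standard route (essentially Loday's original argument): first reduce every dialgebra monomial to normal form by induction on length using the bar and associativity identities, and then establish linear independence by exhibiting the concrete model $D(X)$ with the concatenation products and invoking the universal property. The paper itself does not give a proof of this theorem; it is stated as a quoted result of Loday (see the opening paragraph of the dialgebras section, which attributes all results there to \cite{Loday1,Loday2}). So there is nothing in the paper to compare against beyond noting that your argument is the expected one.

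One small remark on presentation: in the spanning step you write that by induction $u$ and $v$ are \emph{equal to} single normal-form monomials. This is true, but it is worth making explicit that the dialgebra identities you use are all monomial-to-monomial (no identity introduces a sum), so the reduction of a single monomial never produces a linear combination. This is what guarantees that the spanning set is literally the set of normal forms, not their span in some looser sense, and it is also what makes the multiplication formulas of the theorem well defined on basis elements before extension by bilinearity.
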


\begin{definition}
The \textbf{dicommutator} in a dialgebra is the bilinear operation
  \[
  \langle a, b \rangle = a \dashv b - b \vdash a.
  \]
In general, this operation is not anticommutative.
\end{definition}

\begin{lemma}
In an associative dialgebra, the dicommutator satisfies the identity
  \[
  \langle \langle a, b \rangle, c \rangle
  \equiv
  \langle \langle a, c \rangle, b \rangle
  +
  \langle a, \langle b, c \rangle \rangle
  \qquad
  \emph{(Leibniz)}
  \]
\end{lemma}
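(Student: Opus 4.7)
The plan is to reduce the identity to a computation in the free associative dialgebra on three generators $a,b,c$, and then verify it by putting every monomial into the normal form of Lemma \ref{lemmanormalform}. Since the free associative dialgebra has basis the normal-form monomials $x_1\cdots\widehat{x}_i\cdots x_n$, each side will become a $\mathbb{Z}$-linear combination of such basis monomials, and the identity will hold as a polynomial identity in every associative dialgebra if and only if the two combinations coincide.

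First I would expand $\langle\langle a,b\rangle,c\rangle$, $\langle\langle a,c\rangle,b\rangle$, and $\langle a,\langle b,c\rangle\rangle$ using the definition $\langle u,v\rangle = u \dashv v - v \vdash u$; the first outer bracket produces four terms, and similarly for the two pieces of the right-hand side, giving twelve terms in total. Next I would rewrite each of these twelve terms in normal form by applying the rules for the products of normal-form monomials: $x \dashv y$ keeps the center of the left factor, and $x \vdash y$ keeps the center of the right factor. For instance, $(a \dashv b) \dashv c$ becomes $\widehat{a}bc$, while $(b \vdash a) \dashv c$ becomes $b\widehat{a}c$, and so on. Every one of the twelve terms will have center at $a$, since the inner $\langle\cdot,\cdot\rangle$ whose arguments include $a$ dictates that $a$ is always the leftmost argument of a $\dashv$ or the rightmost argument of a $\vdash$.

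After the expansion, the left-hand side collapses to $\widehat{a}bc - b\widehat{a}c - c\widehat{a}b + cb\widehat{a}$, while the sum on the right-hand side collapses, after two pairs of terms cancel, to the same expression. Hence the identity holds. The only potential pitfall, and the step I would be most careful about, is the bookkeeping for the position of the center under the two products: using the normal-form rules of Lemma \ref{lemmanormalform} it is a mechanical but error-prone step, so I would double-check by verifying on a few representative monomials that each of the five defining axioms of an associative dialgebra (left/right bar, left/right/inner associativity) is consistent with my normal-form reductions before assembling the final tally.
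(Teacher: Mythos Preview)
Your proposal is correct. The paper does not supply a proof of this lemma; it is stated as a known fact attributed to Loday. The closest thing to a proof in the paper is Remark~2.12, which performs essentially the same expansion you describe but organized differently: rather than reducing each of the twelve terms to normal form and comparing, the remark expands the Leibniz identity in a general dialgebra and groups terms by the permutation of $a,b,c$, observing that the resulting six equations are precisely the associative-dialgebra axioms. Your normal-form approach and the paper's permutation-matching approach are two bookkeepings of the same computation; yours has the advantage of being a direct verification, while the remark's version additionally shows that the dialgebra axioms are the \emph{minimal} bilinear identities forcing the Leibniz identity to hold.

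One small wording issue: your justification that ``the inner $\langle\cdot,\cdot\rangle$ whose arguments include $a$'' forces the center to be $a$ is not quite right for $\langle a,\langle b,c\rangle\rangle$, where it is the \emph{outer} bracket that contains $a$ as a bare argument. The conclusion is still correct---in every one of the twelve terms $a$ ends up as the center---but the reason is simply that in each outermost bracket, $a$ (or the subexpression containing $a$) sits on the ``pointed'' side of the operation.
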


\begin{definition}
A \textbf{Leibniz algebra} (or Lie dialgebra) is an algebra satisfying the Leibniz identity.
\end{definition}

\begin{remark} \label{Leibnizremark}
If we set $b = c$ in the Leibniz identity then we obtain $\langle a, \langle b, b \rangle \rangle \equiv 0$,
and the linearized form of this identity (assuming characteristic not 2) is
  \[
  \langle a, \langle b, c \rangle \rangle + \langle a, \langle c, b \rangle \rangle \equiv 0
  \qquad
  \text{(right anticommutativity)}
  \]
\end{remark}

\begin{theorem} \emph{\textbf{Loday-Pirashvili.}}
Every Leibniz algebra $L$ has a universal associative
enveloping dialgebra $U(L)$ for which the canonical map $L \to U(L)$ is
injective. Hence every polynomial identity satisfied by the
dicommutator in every associative dialgebra is a consequence of the Leibniz identity.
\end{theorem}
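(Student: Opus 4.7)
The plan is to imitate the standard proof of the PBW theorem, with the free associative algebra replaced by the free associative dialgebra and the Jacobi identity replaced by the Leibniz identity. The statement about polynomial identities is a formal consequence of the existence of a faithful universal object, so the real content is the construction of $U(L)$ together with injectivity of the canonical map.

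First I would construct $U(L)$. Let $D$ be the free associative dialgebra on a vector space basis of $L$ (whose structure is given by the normal form theorem above), and set $U(L) = D/I$, where $I$ is the two-sided dialgebra ideal generated by all elements $a \dashv b - b \vdash a - [a,b]_L$ with $a, b \in L$ and $[\,,\,]_L$ the Leibniz bracket. The canonical map $\iota\colon L \to U(L)$ then satisfies $\langle \iota(a),\iota(b)\rangle = \iota([a,b]_L)$ by construction, and a standard extension argument shows that $(U(L),\iota)$ is universal for this property among associative dialgebras.

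Next I would establish a PBW-type basis. Fix a total order on a basis $\{e_i\}_{i\in I}$ of $L$. By the normal form theorem, $D$ has basis consisting of monomials $e_{i_1}\cdots \widehat{e}_{i_k} \cdots e_{i_n}$. The candidate basis for $U(L)$ consists of those monomials satisfying $i_1 \le \cdots \le i_{k-1}$ and $i_{k+1}\le\cdots\le i_n$ (the two halves weakly increasing, the center otherwise unrestricted). Spanning is routine: any inversion $i_s > i_{s+1}$ on either side of the center can be removed using the defining relations at the cost of producing shorter monomials, so double induction on length and the inversion count reduces every element to a combination of candidates.

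The hard part will be linear independence, that is, the analogue of Bergman's diamond lemma in this setting. I would establish it either by constructing an explicit action of $U(L)$ on the free vector space on the candidate basis and checking that all dialgebra axioms together with the relations $\langle a,b\rangle = [a,b]_L$ are preserved, or equivalently by carrying out a Gröbner--Shirshov argument in the free associative dialgebra along the lines of \cite{Bergman,BokutKolesnikov}. Either way the verification reduces to checking confluence of the rewriting rules at a finite list of ambiguities, and it is precisely here that the Leibniz identity enters, playing exactly the role that the Jacobi identity plays in the classical PBW proof. Once independence is established, the degree-one monomials $e_i$ are visibly independent in $U(L)$, so $\iota$ is injective. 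The final assertion then follows formally: any multilinear identity $f(x_1,\ldots,x_n) \equiv 0$ for the dicommutator in every associative dialgebra, applied in $U(F)$ for $F$ the free Leibniz algebra on $x_1,\ldots,x_n$, restricts to a Leibniz identity on $F \hookrightarrow U(F)$, and so by freeness is a consequence of the Leibniz identity alone.
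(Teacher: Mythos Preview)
The paper does not give its own proof of this theorem; it only cites Loday and Pirashvili \cite{LodayPirashvili} for the original argument and Aymon--Grivel \cite{AymonGrivel} and Insua--Ladra \cite{InsuaLadra} for alternative approaches. Your overall plan---construct $U(L)$ as a quotient of the free associative dialgebra, prove a PBW-type basis via a diamond-lemma / Gr\"obner--Shirshov argument, and read off injectivity---is precisely the strategy of those references, so in spirit you are on the right track, and your final paragraph (deducing the statement about polynomial identities from injectivity of $L\hookrightarrow U(L)$ with $L$ free Leibniz) is correct.

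However, the specific PBW basis you propose is wrong, and the error is not merely cosmetic. You claim the normal forms are the monomials $e_{i_1}\cdots \widehat{e}_{i_k}\cdots e_{i_n}$ with each half weakly increasing and the center position $k$ unconstrained. But the very generator you wrote down for $I$, namely $a\dashv b - b\vdash a - [a,b]_L$, reads in hat notation as $\widehat{a}\,b - b\,\widehat{a} = [a,b]_L$: it \emph{moves the center}. In the associated graded this forces $\widehat{e_i}\,e_j \equiv e_j\,\widehat{e_i}$, so already in degree~$2$ your candidate set has $2d^2$ elements while the graded piece has dimension $d^2$. Your spanning argument uses only the derived rules that swap adjacent factors on one side of the center (these do follow, via the bar identities, from $a\dashv R$ and $R\vdash a$ with $R$ a generator of $I$), and never invokes the center-moving generator itself; any honest diamond-lemma verification would have to include it and would collapse your normal forms further.

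The correct picture is more delicate. Pushing the center to the right, the rule $bc\,\widehat{a} - cb\,\widehat{a} = [b,c]\,\widehat{a}$ together with its $b\leftrightarrow c$ twin yields $([b,c]+[c,b])\,\widehat{a}=0$, so elements of the Leibniz kernel act trivially on the left. Hence the ``left part'' of a normal monomial really lives in $U(L_{\mathrm{Lie}})$ with $L_{\mathrm{Lie}} = L/\langle [x,x]\rangle$, and one must choose the basis of $L$ compatibly with this quotient before the confluence check goes through. This is exactly what is done in \cite{LodayPirashvili,AymonGrivel}. Once you repair the basis in this way, the rest of your outline stands.
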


\begin{remark}
The Loday-Pirashvili Theorem is the generalization to dialgebras of the PBW Theorem.
For the original proof, see \cite{LodayPirashvili}.
For different approaches, see Aymon and Grivel \cite{AymonGrivel}, Insua and Ladra \cite{InsuaLadra}.
\end{remark}

\begin{remark}
The definition of associative dialgebra can be motivated in terms of the Leibniz identity.
If we expand the Leibniz identity in a nonassociative dialgebra using the dicommutator as
the operation, then we obtain
  \allowdisplaybreaks
  \begin{align*}
  &
  ( a \dashv b - b \vdash a ) \dashv c - c \vdash ( a \dashv b - b \vdash a )
  \equiv
  \\
  &
  ( a \dashv c - c \vdash a ) \dashv b - b \vdash ( a \dashv c - c \vdash a )
  +
  a \dashv ( b \dashv c - c \vdash b ) - ( b \dashv c - c \vdash b ) \vdash a.
  \end{align*}
Equating terms with the same permutation of $a, b, c$ gives the following identities:
  \allowdisplaybreaks
  \begin{alignat*}{2}
  &( a \dashv b ) \dashv c \equiv a \dashv ( b \dashv c ),
  &\qquad
  &0 \equiv ( a \dashv c ) \dashv b - a \dashv ( c \vdash b ),
  \\
  &( b \vdash a ) \dashv c \equiv b \vdash ( a \dashv c ),
  &\qquad
  &0 \equiv b \vdash ( c \vdash a ) - ( b \dashv c ) \vdash a,
  \\
  &c \vdash ( a \dashv b ) \equiv ( c \vdash a ) \dashv b,
  &\qquad
  &c \vdash ( b \vdash a ) \equiv ( c \vdash b ) \vdash a.
  \end{alignat*}
These are equivalent to the identities defining associative dialgebras.
\end{remark}

\begin{definition}
The \textbf{antidicommutator} in a dialgebra is the bilinear operation
  \[
  a \star b = a \dashv b + b \vdash a.
  \]
In general, this operation is not commutative.
\end{definition}

\begin{lemma} \label{lemmajordandialgebra}
In an associative dialgebra, the antidicommutator satisfies
  \allowdisplaybreaks
  \begin{alignat*}{2}
  a \star ( b \star c ) &\equiv a \star ( c \star b )
  &\qquad
  &\emph{(right commutativity)}
  \\
  ( b  \star a^2 ) \star a &\equiv ( b \star a ) \star a^2
  &\qquad
  &\emph{(right Jordan identity)}
  \\
  \langle a, b, c^2 \rangle &\equiv 2 \langle a \star c, b, c \rangle
  &\qquad
  &\emph{(right Osborn identity)}
  \end{alignat*}
where $a^2 = a \star a$ and
$\langle a, b, c \rangle = ( a \star b ) \star c - a \star ( b \star c )$.
\end{lemma}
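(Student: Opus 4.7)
All three identities are polynomial identities in the free associative dialgebra, which by the theorem quoted above has an explicit basis of normal-form monomials $x_1 \cdots \widehat{x}_i \cdots x_n$ parametrised by a sequence of generators and a choice of center position. The strategy is uniform: expand each side using $a \star b = a \dashv b + b \vdash a$ (and, for Osborn, $\langle x, y, z \rangle = (x \star y) \star z - x \star (y \star z)$), rewrite every resulting monomial in normal form using left, right, and inner associativity together with the two bar identities, and compare coefficients.

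For right commutativity, both $a \star (b \star c)$ and $a \star (c \star b)$ expand into four monomials each. Because $\dashv$ preserves the center of its left argument and $\vdash$ that of its right, all eight resulting terms have center at $a$, and each side reduces to the sum of the four normal forms in which $b$ and $c$ are distributed (in either order) to the left and right of $a$. The identity follows immediately.

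The right Jordan identity is handled by the same procedure in degree four, the repeated variable $a$ contributing three potential center positions. A direct expansion, organised by which copy of $a$ carries the center, shows that $(b \star a^2) \star a$ and $(b \star a) \star a^2$ reduce to identical linear combinations of normal-form monomials in the letters $b, a, a, a$.

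The main obstacle is the right Osborn identity. A preliminary reduction of the associator gives the compact normal form
\[
\langle a, b, c \rangle \;\equiv\; b \widehat{a} c \;+\; c \widehat{a} b \;-\; \widehat{a} c b \;-\; b c \widehat{a},
\]
where $\widehat{a}$ marks the center. Substituting $c^2 = c \dashv c + c \vdash c$ in the third slot and observing that both summands yield the same normal-form reduction produces
\[
\langle a, b, c^2 \rangle \;\equiv\; 2 \bigl( b \widehat{a} c c + c c \widehat{a} b - \widehat{a} c c b - b c c \widehat{a} \bigr),
\]
which accounts for the coefficient $2$ on the right. On the other side, $\langle a \star c, b, c \rangle = \langle a \dashv c, b, c \rangle + \langle c \vdash a, b, c \rangle$; each summand expands to four normal-form monomials, and after two pairs of mixed-center terms cancel, the remaining four terms match the bracketed expression above. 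Carrying out this cancellation in full is the bookkeeping step on which the whole lemma hinges; the rest is routine normal-form reduction.
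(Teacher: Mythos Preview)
The paper does not actually prove this lemma: it is stated and then, in the remark that follows, attributed to the original sources (Vel\'asquez--Felipe, Kolesnikov, Bremner). So there is no ``paper's own proof'' to compare against beyond the implicit claim that a direct verification works.

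Your direct normal-form verification is exactly the standard argument and is correct. The key reduction
\[
\langle a, b, c \rangle \equiv b\widehat{a}c + c\widehat{a}b - \widehat{a}cb - bc\widehat{a}
\]
is right, your observation that $c \dashv c$ and $c \vdash c$ contribute identically when they sit away from the center (via the bar identities) correctly produces the factor $2$ in $\langle a,b,c^2\rangle$, and the cancellation you describe in $\langle a\dashv c,b,c\rangle + \langle c\vdash a,b,c\rangle$ does occur: the terms $c\widehat{a}cb$ and $bc\widehat{a}c$ appear with opposite signs in the two summands and drop out, leaving precisely $b\widehat{a}cc + cc\widehat{a}b - \widehat{a}ccb - bcc\widehat{a}$. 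The right commutativity and right Jordan cases are, as you say, routine once one tracks the center; one small clarification worth adding is that for the Jordan identity the center always lies on $b$ (since every $\star$-product with $b$ on one side and a pure $a$-word on the other keeps the center at $b$), which makes the bookkeeping short.
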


\begin{remark}
These identities were obtained independently by different authors:
Vel\'asquez and Felipe \cite{VelasquezFelipe}, Kolesnikov \cite{Kolesnikov}, Bremner \cite{Bremner}.
A generalization of the TKK construction from Lie and Jordan algebras to Lie and Jordan dialgebras
has been given by Gubarev and Kolesnikov \cite{GubarevKolesnikov}.
For further work on the structure of Jordan dialgebras, see Felipe \cite{Felipe}.
I have named the last identity in Lemma \ref{lemmajordandialgebra} after Osborn \cite{Osborn};
it is a noncommutative version of the identity stating that a commutator of multiplications is a derivation.
\end{remark}

\begin{definition}
A \textbf{Jordan dialgebra} (or quasi-Jordan algebra) is an algebra satisfying
right commutativity and the right Jordan and Osborn identities.
\end{definition}

\begin{remark}
Strictly speaking, Leibniz algebras and Jordan dialgebras have two operations, but they are opposite,
so we consider only one.
This will become clear when we discuss the KP algorithm for converting
identities for algebras into identities for dialgebras.
\end{remark}

\begin{theorem}
There exist special identities for Jordan dialgebras; that is,
polynomial identities satisfied by the antidicommutator in every associative dialgebra
which are not consequences of right commutativity and the right Jordan and Osborn identities.
\end{theorem}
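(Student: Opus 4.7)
The plan is to lift a classical $s$-identity for Jordan algebras to the dialgebra setting via the KP algorithm (described later in the paper) and then to verify that the resulting identity is both valid in every associative dialgebra and independent of the three defining identities.

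\textbf{Step 1 (candidate identity).} Start from a multilinear $s$-identity $G(a_1,\dots,a_8) \equiv 0$ of degree $8$ for the anticommutator in every associative algebra that is not a consequence of commutativity and the Jordan identity; Glennie's identity or Thedy's identity will serve. The KP algorithm takes a multilinear polynomial identity for an operation $f$ on an algebra and produces, for each placement of the ``center'', a multilinear identity for the corresponding operation on a dialgebra. Applied to $G$, it yields a family of candidate identities $\widetilde{G}_i$ $(1 \le i \le 8)$ in the antidicommutator $\star$.

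\textbf{Step 2 (validity).} Verify that each $\widetilde{G}_i$ holds identically in the free associative dialgebra. By Lemma \ref{lemmanormalform}, every multilinear monomial of degree $8$ in generators $a_1,\dots,a_8$ is uniquely determined by a permutation of the generators together with the position of its center. Expanding the iterated antidicommutators in $\widetilde{G}_i$ produces a linear combination of normal-form monomials, and a bookkeeping argument shows that for each fixed permutation the coefficients vanish precisely because $G$ is an identity in the free associative algebra. This is essentially the correctness statement of the KP algorithm.

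\textbf{Step 3 (non-consequence).} Show that at least one $\widetilde{G}_i$ is not a consequence of right commutativity and the right Jordan and Osborn identities. Under the standing hypothesis that the characteristic is $0$ or greater than $8$, the group algebra $\mathbb{F} S_8$ is semisimple and the multilinear degree-$8$ component of the free nonassociative $\star$-algebra is a finite-dimensional $\mathbb{F} S_8$-module. Inside it, the consequences of the three defining identities form an $\mathbb{F} S_8$-submodule $R$, generated by all substitutions of multilinear monomials into the three identities together with all ``embeddings'' obtained by further $\star$-multiplications. It suffices to exhibit a single partition $\lambda \vdash 8$ for which the image of $\widetilde{G}_i$ in the $\lambda$-isotypic component of the ambient module does not lie in the image of $R$. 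Concretely, fix a Specht module $S^\lambda$, represent each generator of $R$ and the candidate $\widetilde{G}_i$ as a matrix on $S^\lambda$, row-reduce to obtain a basis of the relations in that isotypic component, and verify that the matrix image of $\widetilde{G}_i$ is not in the row span.

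The hard part is Step 3. Even at degree $8$ the ambient module and the submodule $R$ are very large, so a direct calculation on the monomial basis is infeasible; one must exploit the isotypic decomposition to split the computation into much smaller pieces, one per partition of $8$. This is a computer algebra computation in the spirit of those referenced in the remark following Lemma \ref{lemmajordandialgebra}, and it is the dialgebra analogue of the computational difficulty that obstructed the discovery of Glennie's identity in the classical case.
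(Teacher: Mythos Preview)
Your route is different from the paper's.  The paper gives no self-contained argument here; the remark immediately after the theorem attributes the result to a direct computer-algebra search (Bremner--Peresi) which computes the full $S_8$-module of multilinear degree-$8$ identities satisfied by $\star$ in the free associative dialgebra and compares it with the submodule generated by right commutativity and the right Jordan and Osborn identities.  That remark also records that \emph{some but not all} of the special identities found are noncommutative versions of Glennie's identity, so your plan of lifting a classical $s$-identity through the KP algorithm is consistent with what actually happens and would succeed in producing at least one genuine special identity.  The trade-off: the direct search gives a complete picture in degree~$8$ (including special identities that are \emph{not} KP-lifts of any classical $s$-identity), while your approach targets a single candidate and avoids enumerating the whole module.

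There is, however, a real gap in Step~2.  You describe the validity of $\widetilde{G}_i$ as ``essentially the correctness statement of the KP algorithm'' and dismiss it as bookkeeping.  It is not: the KP algorithm is a purely formal rewriting of identities, and the assertion that its output, when the new operations are interpreted as the antidicommutator $a\star b = a\dashv b + b\vdash a$ and its opposite, holds in every associative dialgebra is precisely the inclusion $\mathrm{KP}_d(\omega)\subseteq J_d(\widehat{\omega}_1,\dots,\widehat{\omega}_n)$---one half of Conjecture~\ref{mainconjecture}, which the paper states as open.  What you need is a lemma saying that for every nonassociative $\circ$-monomial $m$, the expansion of the KP-transform of $m$ (with center~$i$) under the BSO operations equals the associative expansion of $m$ with a hat placed on the $i$-th variable in each term; that is a genuine compatibility statement between the two algorithms, not merely a tautology.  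For a \emph{fixed} $G$ of degree~$8$ you can of course bypass this by the same finite expansion you already propose for Step~3, but then Step~2 becomes a computation on the same footing as Step~3, not a conceptual reduction.  Once you grant that, your proof and the paper's are both computer verifications; yours is organised around a specific lifted candidate rather than an exhaustive search.
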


\begin{remark}
This result was obtained using computer algebra by Bremner and Peresi \cite{BremnerPeresi}.
The lowest degree for such identities is 8;
some but not all of these identities are noncommutative versions of the Glennie identity.
For a theoretical approach to similar results, including generalizations of the classical theorems
of Cohn, Macdonald, and Shirshov, see Voronin \cite{Voronin}.
\end{remark}


\section{From algebras to dialgebras}

We now discuss a general approach to the following problem.

\begin{problem}
Given a polynomial identity for algebras,
how do we obtain the corresponding polynomial identity (or identities) for dialgebras?
\end{problem}

An algorithm has been developed by Kolesnikov and Pozhidaev for converting multilinear
identities for algebras into multilinear identities for dialgebras.
For binary algebras, see \cite{Kolesnikov};
for the generalization to $n$-ary algebras, see \cite{Pozhidaev}.
The underlying structure from the theory of operads is discussed by Chapoton \cite{Chapoton}.

\subsection*{Kolesnikov-Pozhidaev (KP) algorithm}

The input is a multilinear polynomial identity of degree $d$ for
an $n$-ary operation denoted by the symbol $\{-,\cdots,-\}$ with $n$ arguments.
The output of Part 1 is a collection of $d$ multilinear identities of degree $d$
for $n$ new $n$-ary operations denoted $\{-,\cdots,-\}_i$ for $1 \le i \le n$.
The output of Part 2 is a collection of multilinear identities of degree $2n{-}1$
for the same new operations.

\subsubsection*{Part 1}

Given a multilinear identity of degree $d$ in the $n$-ary operation $\{-,\cdots,-\}$,
we describe the application of the algorithm to one monomial, and
extend this by linearity to the entire identity. Let $a_1 a_2 \dots a_d$ be a
multilinear monomial of degree $d$ with some placement of $n$-ary operation
symbols. For each $i = 1, \dots, d$ we convert the monomial $a_1 a_2 \dots a_d$
in the original $n$-ary operation into a new monomial of the same degree in the
$n$ new $n$-ary operations, according to the following rule, based on the
position of the variable $a_i$, called the central variable of the
monomial. For each occurrence of the original $n$-ary operation in the
monomial, either $a_i$ occurs in one of the $n$ arguments or not, and we
have two cases:
  \begin{enumerate}
  \item[(a)]
  If $a_i$ occurs in the $j$-th argument then we convert $\{-,\cdots,-\}$ to
  the $j$-th new operation symbol $\{-,\cdots,-\}_j$.
  \item[(b)]
  If $a_i$ does not occur in any of the $n$ arguments, then either
    \begin{itemize}
    \item
    $a_i$ occurs to the left of $\{-,\cdots,-\}$:
    we convert $\{-,\cdots,-\}$ to the first new operation symbol $\{-,\cdots,-\}_1$, or
    \item
    $a_i$ occurs to the right of $\{-,\cdots,-\}$:
    we convert $\{-,\cdots,-\}$ to the last new operation symbol $\{-,\cdots,-\}_n$.
    \end{itemize}
  \end{enumerate}

\subsubsection*{Part 2}

We also include the following identities,
generalizing the bar identities for associative dialgebras, for all $i, j =
1, \dots, n$ with $i \ne j$ and all $k, \ell = 1, \dots, n$:
  \allowdisplaybreaks
  \begin{align*}
  &
  \{ a_1, \dots, a_{i-1}, \{ b_1, \cdots, b_n \}_k, a_{i+1}, \dots, a_n \}_j
  \equiv
  \\
  &
  \{ a_1, \dots, a_{i-1}, \{ b_1, \cdots, b_n \}_\ell, a_{i+1}, \dots, a_n \}_j.
  \end{align*}
This identity says that the $n$ new operations are interchangeable in
the $i$-th argument of the $j$-th new operation when $i \ne j$.

\begin{example}
The definition of associative dialgebra can be obtained by applying
the KP algorithm to the associativity identity, which we write in the form
  \[
  \{ \{ a, b \}, c \} \equiv \{ a, \{ b, c \} \}.
  \]
The operation $\{-,-\}$ produces two new operations $\{-,-\}_1$, $\{-,-\}_2$.
Part 1 of the algorithm produces three identities by making $a, b, c$ in turn the central variable:
  \allowdisplaybreaks
  \begin{alignat*}{2}
  &
  \{ \{ a, b \}_1, c \}_1 \equiv \{ a, \{ b, c \}_1 \}_1,
  &
  \qquad
  &
  \{ \{ a, b \}_2, c \}_1 \equiv \{ a, \{ b, c \}_1 \}_2,
  \\
  &
  \{ \{ a, b \}_2, c \}_2 \equiv \{ a, \{ b, c \}_2 \}_2.
  \end{alignat*}
Part 2 of the algorithm produces two identities:
  \[
  \{ a, \{ b, c \}_1 \}_1 \equiv \{ a, \{ b, c \}_2 \}_1,
  \qquad
  \{ \{ a, b \}_1, c \}_2 \equiv \{ \{ a, b \}_2, c \}_2.
  \]
If we write $a \dashv b$ for $\{ a, b \}_1$ and $a \vdash b$ for $\{ a, b \}_2$
then these are the three associativity identities and the two bar identities.
\end{example}

\begin{example}
The definition of Leibniz algebra
can be obtained by applying the KP algorithm to
the identities defining Lie algebras:
anticommutativity (in its bilinear form) and the Jacobi identity,
  \[
  [a,b] + [b,a] \equiv 0,
  \qquad
  [[a,b],c] + [[b,c],a] + [[c,a],b] \equiv 0.
  \]
Part 1 of the algorithm produces five identities:
  \allowdisplaybreaks
  \begin{alignat*}{2}
  &
  [a,b]_1 + [b,a]_2 \equiv 0,
  &\qquad
  &
  [[a,b]_1,c]_1 + [[b,c]_2,a]_2 + [[c,a]_2,b]_1 \equiv 0,
  \\
  &
  [a,b]_2 + [b,a]_1 \equiv 0,
  &\qquad
  &
  [[a,b]_2,c]_1 + [[b,c]_1,a]_1 + [[c,a]_2,b]_2 \equiv 0,
  \\
  &&&
  [[a,b]_2,c]_2 + [[b,c]_2,a]_1 + [[c,a]_1,b]_1 \equiv 0.
  \end{alignat*}
The two identities of degree 2 are equivalent to $[a,b]_2 \equiv -[b,a]_1$,
so the second operation is superfluous.
Eliminating the second operation from the three identities of degree 3 shows that each of them is
equivalent to the identity
  \[
  [[a,b]_1,c]_1 + [a,[c,b]_1]_1 - [[a,c]_1,b]_1 \equiv 0.
  \]
If we write $\langle a, b \rangle = [a,b]_1$ then we obtain a form of the Leibniz identity.
Part 2 of the algorithm produces two identities:
  \[
  [ a, [ b, c ]_1 ]_1 \equiv [ a, [ b, c ]_2 ]_1,
  \qquad
  [ [ a, b ]_1, c ]_2 \equiv [ [ a, b ]_2, c ]_2.
  \]
Eliminating the second operation gives right anticommutativity:
  \[
  \langle a, \langle b, c \rangle \rangle
  +
  \langle a, \langle c, b \rangle \rangle
  \equiv
  0.
  \]
However, as we have already seen in Remark \ref{Leibnizremark}, the Leibniz identity implies right anticommutativity,
so it suffices to retain only the Leibniz identity.
\end{example}

\begin{example}
To apply the KP algorithm to the defining identities for Jordan algebras, we write commutativity and
the multilinear form of the Jordan identity using the operation symbol $\{-,-\}$:
  \allowdisplaybreaks
  \begin{align*}
  &
  \{ a, b \} - \{ b, a \} \equiv 0,
  \\
  &
  \{ \{ \{ a, c \}, b \}, d \}
  +
  \{ \{ \{ a, d \}, b \}, c \}
  +
  \{ \{ \{ c, d \}, b \}, a \}
  \\
  &\quad
  -
  \{ \{ a, c \}, \{ b, d \} \}
  -
  \{ \{ a, d \}, \{ b, c \} \}
  -
  \{ \{ c, d \}, \{ b, a \} \}
  \equiv 0.
  \end{align*}
From commutativity, Part 1 of the algorithm gives two identities of degree 2:
\[
\{ a, b \}_1 - \{ b, a \}_2 \equiv 0,
\qquad
\{ a, b \}_2 - \{ b, a \}_1 \equiv 0,
\]
These two identities are equivalent to $\{ a, b \}_2 \equiv \{ b, a \}_1$:
the second operation is the opposite of the first, and so we may eliminate $\{-,-\}_2$.
From the linearized Jordan identity, Part 1 of the algorithm gives four identities of degree 4:
  \allowdisplaybreaks
\begin{align*}
&
\{ \{ \{ a, c \}_1, b \}_1, d \}_1
+
\{ \{ \{ a, d \}_1, b \}_1, c \}_1
+
\{ \{ \{ c, d \}_2, b \}_2, a \}_2
\\
&
-
\{ \{ a, c \}_1, \{ b, d \}_1 \}_1
-
\{ \{ a, d \}_1, \{ b, c \}_1 \}_1
-
\{ \{ c, d \}_2, \{ b, a \}_2 \}_2
\equiv 0,
\\
&
\{ \{ \{ a, c \}_2, b \}_2, d \}_1
+
\{ \{ \{ a, d \}_2, b \}_2, c \}_1
+
\{ \{ \{ c, d \}_2, b \}_2, a \}_1
\\
&
-
\{ \{ a, c \}_2, \{ b, d \}_1 \}_2
-
\{ \{ a, d \}_2, \{ b, c \}_1 \}_2
-
\{ \{ c, d \}_2, \{ b, a \}_1 \}_2
\equiv 0,
\\
&
\{ \{ \{ a, c \}_2, b \}_1, d \}_1
+
\{ \{ \{ a, d \}_2, b \}_2, c \}_2
+
\{ \{ \{ c, d \}_1, b \}_1, a \}_1
\\
&
-
\{ \{ a, c \}_2, \{ b, d \}_1 \}_1
-
\{ \{ a, d \}_2, \{ b, c \}_2 \}_2
-
\{ \{ c, d \}_1, \{ b, a \}_1 \}_1
\equiv 0,
\\
&
\{ \{ \{ a, c \}_2, b \}_2, d \}_2
+
\{ \{ \{ a, d \}_2, b \}_1, c \}_1
+
\{ \{ \{ c, d \}_2, b \}_1, a \}_1
\\
&
-
\{ \{ a, c \}_2, \{ b, d \}_2 \}_2
-
\{ \{ a, d \}_2, \{ b, c \}_1 \}_1
-
\{ \{ c, d \}_2, \{ b, a \}_1 \}_1
\equiv 0.
\end{align*}
We replace every instance of $\{-,-\}_2$ by the opposite of $\{-,-\}_1$:
  \allowdisplaybreaks
\begin{align*}
&
\{ \{ \{ a, c \}_1, b \}_1, d \}_1
+
\{ \{ \{ a, d \}_1, b \}_1, c \}_1
+
\{ a, \{ b, \{ d, c \}_1 \}_1 \}_1
\\
&
-
\{ \{ a, c \}_1, \{ b, d \}_1 \}_1
-
\{ \{ a, d \}_1, \{ b, c \}_1 \}_1
-
\{ \{ a, b \}_1, \{ d, c \}_1 \}_1
\equiv 0,
\\
&
\{ \{ b, \{ c, a \}_1 \}_1, d \}_1
+
\{ \{ b, \{ d, a \}_1 \}_1, c \}_1
+
\{ \{ b, \{ d, c \}_1 \}_1, a \}_1
\\
&
-
\{ \{ b, d \}_1, \{ c, a \}_1 \}_1
-
\{ \{ b, c \}_1, \{ d, a \}_1 \}_1
-
\{ \{ b, a \}_1, \{ d, c \}_1 \}_1
\equiv 0,
\\
&
\{ \{ \{ c, a \}_1, b \}_1, d \}_1
+
\{ c, \{ b, \{ d, a \}_1 \}_1 \}_1
+
\{ \{ \{ c, d \}_1, b \}_1, a \}_1
\\
&
-
\{ \{ c, a \}_1, \{ b, d \}_1 \}_1
-
\{ \{ c, b \}_1, \{ d, a \}_1 \}_1
-
\{ \{ c, d \}_1, \{ b, a \}_1 \}_1
\equiv 0,
\\
&
\{ d, \{ b, \{ c, a \}_1 \}_1 \}_1
+
\{ \{ \{ d, a \}_1, b \}_1, c \}_1
+
\{ \{ \{ d, c \}_1, b \}_1, a \}_1
\\
&
-
\{ \{ d, b \}_1, \{ c, a \}_1 \}_1
-
\{ \{ d, a \}_1, \{ b, c \}_1 \}_1
-
\{ \{ d, c \}_1, \{ b, a \}_1 \}_1
\equiv 0.
\end{align*}
We simplify the notation and write $\{a,b\}_1$ as $ab$.
The last four identities become:
\allowdisplaybreaks
\begin{align*}
&
( ( a c ) b ) d
+
( ( a d ) b ) c
+
a ( b ( d c ) )
-
( a c ) ( b d )
-
( a d ) ( b c )
-
( a b ) ( d c )
\equiv 0,
\\
&
( b ( c a ) ) d
+
( b ( d a ) ) c
+
( b ( d c ) ) a
-
( b d ) ( c a )
-
( b c ) ( d a )
-
( b a ) ( d c )
\equiv 0,
\\
&
( ( c a ) b ) d
+
c ( b ( d a ) )
+
( ( c d ) b ) a
-
( c a ) ( b d )
-
( c b ) ( d a )
-
( c d ) ( b a )
\equiv 0,
\\
&
d ( b ( c a ) )
+
( ( d a ) b ) c
+
( ( d c ) b ) a
-
( d b ) ( c a )
-
( d a ) ( b c )
-
( d c ) ( b a )
\equiv 0.
\end{align*}
The first is equivalent to the third and to the fourth, so we retain only the first and second.
Part 2 of the algorithm produces two identities:
\[
\{ a, \{ b, c \}_1 \}_1 \equiv \{ a, \{ b, c \}_2 \}_1,
\qquad
\{ \{ a, b \}_1, c \}_2 \equiv \{ \{ a, b \}_2, c \}_2.
\]
Rewriting these using only the first operation gives
\[
\{ a, \{ b, c \}_1 \}_1 \equiv \{ a, \{ c, b \}_1 \}_1,
\qquad
\{ c, \{ a, b \}_1 \}_1 \equiv \{ c, \{ b, a \}_1 \}_1.
\]
These two identities are equivalent to right commutativity: $a(bc) \equiv a(cb)$.
We rearrange the two retained identities of degree 4
and apply right commutativity:
  \allowdisplaybreaks
\begin{align*}
&
( ( a c ) b ) d
-
( a c ) ( b d )
+
( ( a d ) b ) c
-
( a d ) ( b c )
-
( a b ) ( c d )
+
a ( b ( c d ) )
\equiv 0,
\\
&
( b ( a c ) ) d
+
( b ( a d ) ) c
+
( b ( c d ) ) a
-
( b d ) ( a c )
-
( b c ) ( a d )
-
( b a ) ( c d )
\equiv 0.
\end{align*}
The first identity can be reformulated in terms of associators as follows,
\[
( a c, b, d )
+
( a d, b, c )
-
( a, b, c d )
\equiv 0,
\]
and assuming characteristic $\ne 2$ this is equivalent to
\[
( a, b, c^2 ) \equiv 2 ( a c, b, c ).
\]
Setting $a = c = d$ in the second identity and assuming characteristic $\ne 3$ gives
\[
( b a^2 ) a \equiv ( b a ) a^2,
\]
Thus we obtain right commutativity and the right Osborn and Jordan identities.
\end{example}

\begin{example}
The multilinear forms of the left and right alternative identities defining alternative algebras are:
  \[
  (a,b,c) + (b,a,c) \equiv 0,
  \qquad
  (a,b,c) + (a,c,b) \equiv 0.
  \]
Expanding the associators gives
  \[
  (ab)c - a(bc) + (ba)c - b(ac) \equiv 0,
  \qquad
  (ab)c - a(bc) + (ac)b - a(cb) \equiv 0.
  \]
We apply the KP algorithm to these identities, writing $\{-,-\}$ for the original bilinear operation.
Part 1 gives six identities
relating the two new operations $\{-,-\}_1$ and $\{-,-\}_2$: in each of the two
original identities we make either $a$, $b$, or $c$ the central argument.
In this case, we retain both operations, since there is no identity of degree 2
relating $\{-,-\}_1$ and $\{-,-\}_2$.
We obtain six identities defining alternative dialgebras; in the first (second) group of three,
the only differences are in the subscripts 1 and 2 indicating the position of the central variable:
  \allowdisplaybreaks
  \begin{align*}
  \{ \{ a, b \}_1, c \}_1 - \{ a, \{ b, c \}_1 \}_1 + \{ \{ b, a \}_2, c \}_1 - \{ b, \{ a, c \}_1 \}_2 &\equiv 0,
  \\
  \{ \{ a, b \}_2, c \}_1 - \{ a, \{ b, c \}_1 \}_2 + \{ \{ b, a \}_1, c \}_1 - \{ b, \{ a, c \}_1 \}_1 &\equiv 0,
  \\
  \{ \{ a, b \}_2, c \}_2 - \{ a, \{ b, c \}_2 \}_2 + \{ \{ b, a \}_2, c \}_2 - \{ b, \{ a, c \}_2 \}_2 &\equiv 0,
  \\
  \{ \{ a, b \}_1, c \}_1 - \{ a, \{ b, c \}_1 \}_1 + \{ \{ a, c \}_1, b \}_1 - \{ a, \{ c, b \}_1 \}_1 &\equiv 0,
  \\
  \{ \{ a, b \}_2, c \}_1 - \{ a, \{ b, c \}_1 \}_2 + \{ \{ a, c \}_2, b \}_2 - \{ a, \{ c, b \}_2 \}_2 &\equiv 0,
  \\
  \{ \{ a, b \}_2, c \}_2 - \{ a, \{ b, c \}_2 \}_2 + \{ \{ a, c \}_2, b \}_1 - \{ a, \{ c, b \}_1 \}_2 &\equiv 0.
  \end{align*}
We revert to standard notation: $\dashv$ for $\{-,-\}_1$ and $\vdash$ for $\{-,-\}_2$:
  \allowdisplaybreaks
  \begin{align*}
  ( a \dashv b ) \dashv c - a \dashv ( b \dashv c ) + ( b \vdash a ) \dashv c - b \vdash ( a \dashv c ) &\equiv 0,
  \\
  ( a \vdash b ) \dashv c - a \vdash ( b \dashv c ) + ( b \dashv a ) \dashv c - b \dashv ( a \dashv c ) &\equiv 0,
  \\
  ( a \vdash b ) \vdash c - a \vdash ( b \vdash c ) + ( b \vdash a ) \vdash c - b \vdash ( a \vdash c ) &\equiv 0,
  \\
  ( a \dashv b ) \dashv c - a \dashv ( b \dashv c ) + ( a \dashv c ) \dashv b - a \dashv ( c \dashv b ) &\equiv 0,
  \\
  ( a \vdash b ) \dashv c - a \vdash ( b \dashv c ) + ( a \vdash c ) \vdash b - a \vdash ( c \vdash b ) &\equiv 0,
  \\
  ( a \vdash b ) \vdash c - a \vdash ( b \vdash c ) + ( a \vdash c ) \dashv b - a \vdash ( c \dashv b ) &\equiv 0.
  \end{align*}
We rewrite these in terms of the left, right and inner associators:
  \allowdisplaybreaks
  \begin{alignat*}{2}
  &
  ( a, b, c )_\dashv + ( b, a, c )_\times \equiv 0,
  &\qquad
  &
  ( a, b, c )_\times + ( b, a, c )_\dashv \equiv 0,
  \\
  &
  ( a, b, c )_\vdash + ( b, a, c )_\vdash \equiv 0,
  &\qquad
  &
  ( a, b, c )_\dashv + ( a, c, b )_\dashv \equiv 0,
  \\
  &
  ( a, b, c )_\times + ( a, c, b )_\vdash \equiv 0,
  &\qquad
  &
  ( a, b, c )_\vdash + ( a, c, b )_\times \equiv 0.
  \end{alignat*}
These six identities show how the associators change under various transpositions of the arguments.
In particular, the identities in the second row show that
the right operation $a \vdash b$ is left alternative, and
the left operation $a \dashv b$ is right alternative.
(We do not have two alternative operations.)
Part 2 of the algorithm simply gives the left and right bar identities.
To summarize, we define an alternative dialgebra to be a 0-dialgebra satisfying
  \[
  (a,b,c)_\dashv + (c,b,a)_\vdash \equiv 0,
  \quad
  (a,b,c)_\dashv - (b,c,a)_\vdash \equiv 0,
  \quad
  (a,b,c)_\times + (a,c,b)_\vdash \equiv 0,
  \]
where the left, right, and inner associators are defined by
  \allowdisplaybreaks
  \begin{alignat*}{2}
  (a,b,c)_\dashv
  &=
  (a \dashv b) \dashv c - a \dashv (b \dashv c),
  &\qquad
  (a,b,c)_\vdash
  &=
  (a \vdash b) \vdash c - a \vdash (b \vdash c),
  \\
  (a,b,c)_\times
  &=
  (a \vdash b) \dashv c - a \vdash (b \dashv c).
  \end{alignat*}
This definition was originally obtained in a different way by Liu \cite{Liu}.
\end{example}

\begin{example}
Malcev algebras \cite{PSMalcev} can be defined by the polynomial identities of degree
$\le 4$ satisfied by the commutator in every alternative algebra.
Bremner, Peresi and S\'anchez-Ortega \cite{BPSO} used computer algebra to study
the identities satisfied by the dicommutator in every alternative dialgebra,
and proved that every such identity of degree $\le 6$ is a consequence of the identities
of degree $\le 4$.  They showed that the
identities of degree $\le 4$ are equivalent to those obtained by applying the KP algorithm
to linearized forms of anticommutativity the Malcev identity, namely right anticommutativity
and a ``noncommutative'' version of the Malcev identity:
  \[
  a(bc) + a(cb) \equiv 0,	
  \qquad
  ((ab)c)d - ((ad)b)c - (a(cd))b - (ac)(bd) - a((bc)d) \equiv 0.
  \]
These two identities define the variety of Malcev dialgebras.
\end{example}


\section{Multilinear operations}

We now consider generalizations of the commutator $ab-ba$ and anticommutator $ab+ba$
to operations of arbitrary ``arity'' (number of arguments).
The following definitions and examples are based primarily on Bremner and Peresi \cite{BremnerPeresi2}.

\begin{definition}
A \textbf{multilinear $n$-ary operation} $\omega( a_1, a_2, \dots, a_n )$,
or more concisely an $n$-linear operation,
is a linear combination of permutations of the monomial
$a_1 a_2 \cdots a_n$ regarded as an element of the free associative algebra on $n$ generators:
  \[
  \omega( a_1, a_2, \dots, a_n )
  =
  \sum_{\sigma \in S_n} x_\sigma \, a_{\sigma(1)} a_{\sigma(2)} \cdots a_{\sigma(n)}
  \qquad
  ( x_\sigma \in \mathbb{F} ).
  \]
We identify $\omega( a_1, a_2, \dots, a_n )$ with an element of $\mathbb{F} S_n$,
the group algebra of the symmetric group $S_n$ which acts by permuting the subscripts
of the generators.
\end{definition}

\begin{definition}
Two multilinear operations are \textbf{equivalent} if each is a linear combination of
permutations of the other; this is the same as saying that the
two operations generate the same left ideal in $\mathbb{F} S_n$.
\end{definition}

\begin{example}
For $n = 2$, we have the Wedderburn decomposition
$\mathbb{F} S_2 \approx \mathbb{F} \oplus \mathbb{F}$,
where the two simple ideals correspond to partitions 2 and $1+1$ and have bases $ab+ba$ and $ab-ba$
respectively (writing $a, b$ instead of $a_1, a_2$).
There are four equivalence classes, corresponding to the commutator, the
anticommutator, the zero operation, and the original associative operation $ab$.
\end{example}

\begin{table}
\[
\begin{array}{rll}
&\qquad
\text{reduced matrix form}
&\qquad
\text{permutation form}
\\
\toprule
1
&\qquad
\left[ \, 0, \, \left[\begin{array}{rr} 0 & 1 \\ 0 & 0 \end{array}\right], \, 0 \, \right]
&\qquad
abc-bac-cab+cba
\\[10pt]
2
&\qquad
\left[ \, 0, \, \left[\begin{array}{rr} 1 & 1/2 \\ 0 & 0 \end{array}\right], \, 0 \, \right]
&\qquad
abc+acb-bca-cba
\\[10pt]
3
&\qquad
\left[ \, 1, \, \left[\begin{array}{rr} 0 & 1 \\ 0 & 0 \end{array}\right], \, 0 \, \right]
&\qquad
abc+cba
\\[10pt]
4
&\qquad
\left[ \, 1, \, \left[\begin{array}{rr} 1 & 0 \\ 0 & 0 \end{array}\right], \, 0 \, \right]
&\qquad
abc+bac
\\[10pt]
5
&\qquad
\left[ \, 1, \, \left[\begin{array}{rr} 1 & 1 \\ 0 & 0 \end{array}\right], \, 0 \, \right]
&\qquad
abc+acb
\\[10pt]
6
&\qquad
\left[ \, 1, \, \left[\begin{array}{rr} 1 & 1/2 \\ 0 & 0 \end{array}\right], \, 0 \, \right]
&\qquad
2abc+acb+2bac+bca
\\[10pt]
7
&\qquad
\left[ \, 0, \, \left[\begin{array}{rr} 0 & 1 \\ 0 & 0 \end{array}\right], \, 1 \, \right]
&\qquad
2abc-acb-2bac+bca
\\[10pt]
8
&\qquad
\left[ \, 0, \, \left[\begin{array}{rr} 1 & -1 \\ 0 & 0 \end{array}\right], \, 1 \, \right]
&\qquad
abc-acb
\\[10pt]
9
&\qquad
\left[ \, 0, \, \left[\begin{array}{rr} 1 & 2 \\ 0 & 0 \end{array}\right], \, 1 \, \right]
&\qquad
abc-bac
\\[10pt]
10
&\qquad
\left[ \, 0, \, \left[\begin{array}{rr} 1 & 1/2 \\ 0 & 0 \end{array}\right], \, 1 \, \right]
&\qquad
abc-cba
\\[10pt]
11
&\qquad
\left[ \, 1, \, \left[\begin{array}{rr} 0 & 1 \\ 0 & 0 \end{array}\right], \, 1 \, \right]
&\qquad
abc-bac+bca
\\[10pt]
12
&\qquad
\left[ \, 1, \, \left[\begin{array}{rr} 1 & 0 \\ 0 & 0 \end{array}\right], \, 1 \, \right]
&\qquad
abc+cab-cba
\\[10pt]
13
&\qquad
\left[ \, 1, \, \left[\begin{array}{rr} 1 & 1 \\ 0 & 0 \end{array}\right], \, 1 \, \right]
&\qquad
abc+bca-cba
\\[10pt]
14
&\qquad
\left[ \, 1, \, \left[\begin{array}{rr} 1 & -1 \\ 0 & 0 \end{array}\right], \, 1 \, \right]
&\qquad
abc+bac+cab
\\[10pt]
15
&\qquad
\left[ \, 1, \, \left[\begin{array}{rr} 1 & 2 \\ 0 & 0 \end{array}\right], \, 1 \, \right]
&\qquad
abc+acb+bca
\\[10pt]
16
&\qquad
\left[ \, 1, \, \left[\begin{array}{rr} 1 & 1/2 \\ 0 & 0 \end{array}\right], \, 1 \, \right]
&\qquad
abc+acb+bac
\\[10pt]
17
&\qquad
\left[ \, 0, \, \left[\begin{array}{rr} 1 & 0 \\ 0 & 1 \end{array}\right], \, 0 \, \right]
&\qquad
abc-bca
\\[10pt]
18
&\qquad
\left[ \, 1, \, \left[\begin{array}{rr} 1 & 0 \\ 0 & 1 \end{array}\right], \, 0 \, \right]
&\qquad
abc+acb+bac-cba
\\[10pt]
19
&\qquad
\left[ \, 0, \, \left[\begin{array}{rr} 1 & 0 \\ 0 & 1 \end{array}\right], \, 1 \, \right]
&\qquad
abc+acb-bca-cab
\\
\bottomrule
\end{array}
\]
\medskip
\caption{Simplified trilinear operations}
\label{trilinearoperations}
\end{table}

\begin{example}
For $n = 3$, we have the Wedderburn decomposition
  \[
  \mathbb{F} S_3 \approx \mathbb{F} \oplus M_2(\mathbb{F}) \oplus \mathbb{F},
  \]
where the simple ideals correspond to partitions 3, $2+1$ and $1+1+1$.
As representatives of the equivalence classes of trilinear operations we take ordered triples of matrices in row canonical form:
  \[
  \left[ \;
  x, \;
  \begin{bmatrix} y_{11} & y_{12} \\ y_{21} & y_{22} \end{bmatrix}, \;
  z \;
  \right]
  \]
The first and third components are either 0 or 1; the second can be one of
  \[
  \begin{bmatrix} 0 & 0 \\ 0 & 0 \end{bmatrix},
  \qquad
  \begin{bmatrix} 1 & q \\ 0 & 0 \end{bmatrix} \; (q \in \mathbb{F}),
  \qquad
  \begin{bmatrix} 0 & 1 \\ 0 & 0 \end{bmatrix},
  \qquad
  \begin{bmatrix} 1 & 0 \\ 0 & 1 \end{bmatrix}.
  \]
There are infinitely many equivalence classes:
four infinite families (for which the $2 \times 2$ matrix has rank 1)
and six isolated operations (for which the $2 \times 2$ matrix has rank 0 or 2).
In order to classify these operations, we consider two bases for the group algebra $\mathbb{F} S_3$,
assuming that the characteristic of $\mathbb{F}$ is not 2 or 3.
The first basis consists of the permutations in lexicographical order:
  \[
  abc, \quad
  acb, \quad
  bac, \quad
  bca, \quad
  cab, \quad
  cba.
  \]
The second basis consists of the matrix units for the Wedderburn decomposition:
  \allowdisplaybreaks
  \begin{align*}
  S &= \tfrac16 ( abc + acb + bca + bca + cab + cba ),
  \\
  E_{11} &= \tfrac13 ( abc + bca - bca - cba ),
  \qquad
  E_{12} = \tfrac13 ( acb - bca + bca - cab ),
  \\
  E_{21} &= \tfrac13 ( acb - bca + cab - cba ),
  \qquad
  E_{22} = \tfrac13 ( abc - bca - cab + cba ),
  \\
  A &= \tfrac16 ( abc - acb - bca + bca + cab - cba ).
  \end{align*}
The change of basis matrices are
  \[
  M =
  \frac16
  \left[
  \begin{array}{rrrrrr}
  1 &\!\!  2 &\!\!  0 &\!\!  0 &\!\!  2 &\!\!  1 \\
  1 &\!\!  0 &\!\!  2 &\!\!  2 &\!\!  0 &\!\! -1 \\
  1 &\!\!  2 &\!\! -2 &\!\!  0 &\!\! -2 &\!\! -1 \\
  1 &\!\! -2 &\!\!  2 &\!\! -2 &\!\!  0 &\!\!  1 \\
  1 &\!\!  0 &\!\! -2 &\!\!  2 &\!\! -2 &\!\!  1 \\
  1 &\!\! -2 &\!\!  0 &\!\! -2 &\!\!  2 &\!\! -1
  \end{array}
  \right],
  \quad
  M^{-1}
  =
  \left[
  \begin{array}{rrrrrr}
  1 &\!\!  1 &\!\!  1 &\!\!  1 &\!\!  1 &\!\!  1 \\
  1 &\!\!  0 &\!\!  1 &\!\!  0 &\!\! -1 &\!\! -1 \\
  0 &\!\!  1 &\!\!  0 &\!\!  1 &\!\! -1 &\!\! -1 \\
  0 &\!\!  1 &\!\! -1 &\!\! -1 &\!\!  1 &\!\!  0 \\
  1 &\!\!  0 &\!\! -1 &\!\! -1 &\!\!  0 &\!\!  1 \\
  1 &\!\! -1 &\!\! -1 &\!\!  1 &\!\!  1 &\!\! -1
  \end{array}
  \right]
  \]
Except for the associative operation $abc$, all these operations satisfy polynomial identities in degree 3.
Bremner and Peresi \cite{BremnerPeresi2} identified 19 of these operations which satisfy polynomial identities in degree 5
which do not follow from the identities in degree 3.
These operations are given in Table \ref{trilinearoperations}, which contains the representative of
the equivalence class in matrix form and the simplest operation in that class written as a linear combination of
permutations.
(The simplified forms of the operations were found by enumerating all $5^6 = 15625$ linear combinations of the
permutations with coefficients $\{ 0, \pm 1, \pm 2 \}$, computing the reduced matrix form of
each of the resulting group algebra elements, and recording those which belong to the same equivalence class
as one of the operations from \cite{BremnerPeresi2}.)
This list includes the Lie and anti-Lie triple products,
  \[
  abc - bac - cab + cba,
  \qquad
  abc + bac - cab - cba,
  \]
and the Jordan and anti-Jordan triple products,
  \[
  abc + cba,
  \qquad
  abc - cba.
  \]
The list does not include the symmetric, alternating, and cyclic sums,
  \allowdisplaybreaks
  \begin{align*}
  &
  abc + acb + bca + bca + cab + cba,
  \qquad
  abc - acb - bca + bca + cab - cba,
  \\
  &
  abc + bca + cab,
  \end{align*}
since every polynomial identity of degree 5 satisfied by these operations is a consequence of the identities in degree 3.
In other words, there are no new identities until degree 7;
see Bremner and Hentzel \cite{BremnerHentzel}.
\end{example}

We now discuss a general approach to the following problem.

\begin{problem}
Given a multilinear operation for algebras, how do we obtain the corresponding operation (or operations) for dialgebras?
\end{problem}

A simple algorithm which converts a multilinear operation of degree $n$ in an associative algebra
into $n$ multilinear operations of degree $n$ in an associative dialgebra was introduced by
Bremner and S\'anchez-Ortega \cite{BSO}.

\subsection*{Bremner--S\'anchez-Ortega (BSO) algorithm}

The input is a multilinear $n$-ary operation $\omega$ in an associative algebra:
\[
\omega( a_1, a_2, \dots, a_n )
=
\sum_{\sigma \in S_n}
x_\sigma \, a_{\sigma(1)} a_{\sigma(2)} \cdots a_{\sigma(n)}
\qquad
(x_\sigma \in \mathbb{F}).
\]
For each $i = 1, 2, \dots, n$ we partition the set of all permutations into subsets
according to the position of $i$:
  \[
  S_n^{j,i}
  =
  \{ \, \sigma \in S_n \mid \sigma(j) = i \, \}.
  \]
For each $i = 1, 2, \dots, n$ we collect the terms of $\omega$ in which $a_i$ is in position $j$:
\[
\omega_i( a_1, a_2, \dots, a_n )
=
\sum_{j=1}^n
\sum_{S_n^{j,i}}
x_\sigma \, a_{\sigma(1)} \cdots a_{\sigma(j-1)} a_i a_{\sigma(j+1)} \cdots a_{\sigma(n)}.
\]
The output consists of $n$ new multilinear $n$-ary operations
$\widehat{\omega}_1$, \dots, $\widehat{\omega}_n$
in an associative dialgebra, obtained from $\omega$ by making $a_i$ the center of each term:
  \[
  \widehat{\omega}_i( a_1, a_2, \dots, a_n )
  =
  \sum_{j=1}^n
  \sum_{S_n^{(i)}}
  x_\sigma \, a_{\sigma(1)} \cdots a_{\sigma(j-1)} \, \widehat{a}_i \,
  a_{\sigma(j+1)} \cdots a_{\sigma(n)}.
  \]

\begin{example}
The commutator $ab - ba$ produces two dicommutators;
the second is the negative of the opposite of the first, $\langle a, b \rangle_2 = - \langle b, a \rangle_1$:
  \[
  \langle a, b \rangle_1 = \widehat{a} b - b \widehat{a},
  \qquad
  \langle a, b \rangle_2 = a \widehat{b} - \widehat{b} a.
  \]
The anticommutator $ab + ba$ produces two antidicommutators;
the second is the opposite of the first,
$\langle a, b \rangle_2 = \langle b, a \rangle_1$:
  \[
  \langle a, b \rangle_1 = \widehat{a} b + b \widehat{a},
  \qquad
  \langle a, b \rangle_2 = a \widehat{b} + \widehat{b} a.
  \]
\end{example}

\begin{example} \label{ltsexample}
We apply the BSO algorithm to the Lie triple product,
  \[
  \omega(a,b,c) = abc - bac - cab + cba.
  \]
We obtain these three dialgebra operations:
  \allowdisplaybreaks
  \begin{alignat*}{2}
  \widehat{\omega}_1(a,b,c) &= \widehat{a}bc - b\widehat{a}c - c\widehat{a}b + cb\widehat{a},
  &\qquad
  \widehat{\omega}_2(a,b,c) &= a\widehat{b}c - \widehat{b}ac - ca\widehat{b} + c\widehat{b}a,
  \\
  \widehat{\omega}_3(a,b,c) &= ab\widehat{c} - ba\widehat{c} - \widehat{c}ab + \widehat{c}ba.
  \end{alignat*}
We have
  \[
  \widehat{\omega}_2(a,b,c) = - \widehat{\omega}_1(b,a,c),
  \qquad
  \widehat{\omega}_3(a,b,c) = \widehat{\omega}_1(c,b,a) - \widehat{\omega}_1(c,a,b),
  \]
so we only retain $\widehat{\omega}_1(a,b,c)$.
\end{example}

\begin{example} \label{jtsexample}
We apply the BSO algorithm to the Jordan triple product,
  \[
  \omega(a,b,c) = abc + cba.
  \]
We obtain these three dialgebra operations:
  \[
  \widehat{\omega}_1(a,b,c) = \widehat{a}bc + cb\widehat{a},
  \qquad
  \widehat{\omega}_2(a,b,c) = a\widehat{b}c + c\widehat{b}a,
  \qquad
  \widehat{\omega}_3(a,b,c) = ab\widehat{c} + \widehat{c}ba.
  \]
We have $\widehat{\omega}_3(a,b,c) = \widehat{\omega}_1(c,b,a)$, so we only retain
$\widehat{\omega}_1(a,b,c)$ and $\widehat{\omega}_2(a,b,c)$.
The second operation is symmetric in its first and third arguments:
$\widehat{\omega}_2(c,b,a) = \widehat{\omega}_2(a,b,c)$.
\end{example}


\section{Leibniz triple systems}

We consider the dialgebra analogue of Lie triple systems.
We apply the KP algorithm to the defining polynomial identities, and then find the identities
satisfied by the operations obtained from the BSO algorithm applied to the Lie triple product.
We then use computer algebra to verify that the results are equivalent.
This section is a summary of Bremner and S\'anchez-Ortega \cite{BSO2}.
We assume that the base field $\mathbb{F}$ does not have characteristic 2, 3 or 5.

\begin{definition} \label{definitionLTS}
A \textbf{Lie triple system} is a vector space $T$ with a trilinear operation $T \times T \times T \to T$
denoted $[a,b,c]$ satisfying these multilinear identities:
  \allowdisplaybreaks
  \begin{align*}
  &[a,b,c] + [b,a,c] \equiv 0,
  \\
  &[a,b,c] + [b,c,a] + [c,a,b] \equiv 0,
  \\
  &[a,b,[c,d,e]] - [[a,b,c],d,e] - [c,[a,b,d],e] - [c,d,[a,b,e]] \equiv 0.
  \end{align*}
These identities are satisfied by the Lie triple product in any associative algebra.
\end{definition}

\subsection{KP algorithm}

Applying Part 1 of the algorithm to the identities of degree 3 in Definition \ref{definitionLTS} gives
  \allowdisplaybreaks
  \begin{alignat*}{2}
  &[a,b,c]_1 + [b,a,c]_2 \equiv 0,
  &\qquad
  &[a,b,c]_1 + [b,c,a]_3 + [c,a,b]_2 \equiv 0,
  \\
  &[a,b,c]_2 + [b,a,c]_1 \equiv 0,
  &\qquad
  &[a,b,c]_2 + [b,c,a]_1 + [c,a,b]_3 \equiv 0,
  \\
  &[a,b,c]_3 + [b,a,c]_3 \equiv 0,
  &\qquad
  &[a,b,c]_3 + [b,c,a]_2 + [c,a,b]_1 \equiv 0.
  \end{alignat*}
The first two identities on the left are equivalent and show that $[-,-,-]_2$ is superfluous;
the three on the right are equivalent and show that $[-,-,-]_3$ is superfluous:
  \[
  [a,b,c]_2 \equiv - [b,a,c]_1,
  \qquad
  [a,b,c]_3 \equiv - [b,c,a]_2 - [c,a,b]_1 \equiv [c,b,a]_1 - [c,a,b]_1.
  \]
We retain only the first operation which we write as $\langle -,-,- \rangle$.
Applying Part 1 of the algorithm to the identity of degree 5 in Definition \ref{definitionLTS} gives five identities, two of which are redundant.
We use the previous equations to eliminate $[-,-,-]_2$ and $[-,-,-]_3$ from the remaining three identities and obtain:
  \begin{equation} \label{lts1}
  \left\{
  \begin{array}{l}
  \langle a, b, \langle c, d, e \rangle \rangle
  -
  \langle \langle a, b, c \rangle, d, e \rangle
  +
  \langle \langle a, b, d \rangle, c, e \rangle
  -
  \langle \langle a, b, e \rangle, d, c \rangle
  \\[2pt]
  \;\;
  {} +
  \langle \langle a, b, e\rangle , c, d\rangle
  \equiv 0,
  \\[2pt]
  \langle \langle c, d, e \rangle, b, a \rangle
  -
  \langle \langle c, d, e \rangle, a, b \rangle
  -
  \langle \langle c, b, a \rangle, d, e \rangle
  +
  \langle \langle c, a, b \rangle, d, e \rangle
  \\[2pt]
  \;\;
  {} -
  \langle c, \langle a, b, d \rangle, e \rangle
  -
  \langle c, d, \langle a, b, e \rangle \rangle
  \equiv 0,
  \\[2pt]
  \langle \langle e, d, c \rangle, b, a \rangle
  -
  \langle \langle e, c, d \rangle, b, a \rangle
  -
  \langle \langle e, d, c \rangle, a, b \rangle
  +
  \langle \langle e, c, d \rangle, a, b \rangle
  \\[2pt]
  \;\;
  {} -
  \langle e, d, \langle c, b, a \rangle \rangle
  +
  \langle e, d, \langle c, a, b \rangle \rangle
  +
  \langle e, \langle c, b, a \rangle, d \rangle
  -
  \langle e, \langle c, a, b \rangle, d \rangle
  \\[2pt]
  \;\;
  {} -
  \langle e, \langle d, b, a \rangle, c \rangle
  +
  \langle e, \langle d, a, b \rangle, c \rangle
  +
  \langle e, c, \langle d, b, a \rangle \rangle
  -
  \langle e, c, \langle d, a, b \rangle \rangle
  \\[2pt]
  \;\;
  {} -
  \langle \langle e, b, a \rangle, d, c \rangle
  +
  \langle \langle e, a, b \rangle, d, c \rangle
  +
  \langle \langle e, b, a \rangle, c, d \rangle
  -
  \langle \langle e, a, b \rangle, c, d \rangle
  \equiv 0.
  \end{array}
  \right.
  \end{equation}
Part 2 produces 12 identities; after eliminating $[-,-,-]_2$ and $[-,-,-]_3$ we obtain:
  \begin{equation} \label{lts2}
  \left\{
  \begin{array}{l}
  \langle  a,  \langle  b, c, d  \rangle , e  \rangle
  +
  \langle  a,  \langle  c, b, d  \rangle , e  \rangle
  \equiv
  0,
  \\[2pt]
  \langle  a,  \langle  b, c, d  \rangle , e  \rangle
  +
  \langle  a,  \langle  c, d, b  \rangle , e  \rangle
  +
  \langle  a,  \langle  d, b, c  \rangle , e  \rangle
  \equiv
  0,
  \\[2pt]
  \langle  a, b,  \langle  c, d, e  \rangle   \rangle
  +
  \langle  a, b,  \langle  d, c, e  \rangle   \rangle
  \equiv
  0,
  \\[2pt]
  \langle  a, b,  \langle  c, d, e  \rangle   \rangle
  +
  \langle  a, b,  \langle  d, e, c  \rangle   \rangle
  +
  \langle  a, b,  \langle  e, c, d  \rangle   \rangle
  \equiv
  0.
  \end{array}
  \right.
  \end{equation}
These identities show that the inner triple in a monomial of the second or third association types,
$\langle -, \langle -, -, - \rangle, - \rangle$ and $\langle -, -, \langle -, -, - \rangle \rangle$,
has properties analogous to identities of degree 3 in the definition of Lie triple system:
the ternary analogues of skew-symmetry and the Jacobi identity.

\subsection{BSO algorithm}

We saw in Example \ref{ltsexample} that we need only one operation,
  \[
  \langle a,b,c \rangle = \widehat{a}bc - b\widehat{a}c - c\widehat{a}b + cb\widehat{a}.
  \]
Every identity of degree at most 5 satisfied by this operation follows from the two identities in the next definition.
Furthermore, the seven identities \eqref{lts1} and \eqref{lts2} are equivalent to the next two identities.

\begin{definition}
A \textbf{Leibniz triple system} (or Lie triple disystem) is a vector space $T$ with a trilinear
operation $\langle -,-,-\rangle\colon T \times T \times T \to T$
satisfying these identities:
  \allowdisplaybreaks
  \begin{align*}
  \langle a, \langle b, c, d \rangle, e \rangle
  &\equiv
  \langle \langle a, b, c \rangle, d, e \rangle
  -
  \langle \langle a, c, b \rangle, d, e \rangle
  -
  \langle \langle a, d, b \rangle, c, e \rangle
  +
  \langle \langle a, d, c \rangle, b, e \rangle,
  \\
  \langle a, b, \langle c, d, e \rangle \rangle
  &\equiv
  \langle \langle a, b, c \rangle, d, e \rangle
  -
  \langle \langle a, b, d \rangle, c, e \rangle
  -
  \langle \langle a, b, e \rangle, c, d \rangle
  +
  \langle \langle a, b, e \rangle, d, c \rangle.
  \end{align*}
In the right sides of these identities, the signs and permutations of $b,c,d$
and $c,d,e$ correspond to the expansion of the Lie triple products $[[b,c],d]$
and $[[c,d],e]$.
\end{definition}

\begin{theorem}
Any subspace of a Leibniz algebra which is closed under the iterated Leibniz bracket is a Leibniz triple system.
\end{theorem}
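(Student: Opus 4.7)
The plan is to take the iterated Leibniz bracket $\langle a, b, c \rangle := \langle\langle a, b\rangle, c\rangle$ (the natural analogue of the iterated Lie bracket that produces a Lie triple system) and verify the two defining identities of a Leibniz triple system using nothing more than the Leibniz identity itself. I would first rewrite the Leibniz identity in the form most convenient for the calculation, namely $\langle x, \langle y, z\rangle\rangle \equiv \langle\langle x, y\rangle, z\rangle - \langle\langle x, z\rangle, y\rangle$, which allows an inner bracket appearing on the right side of an outer bracket to be pushed outward at the cost of a difference of two fully iterated brackets. This ``right derivation'' rewrite is the only tool required.

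For the second identity I would compute $\langle a, b, \langle c, d, e\rangle\rangle = \langle\langle a, b\rangle, \langle\langle c, d\rangle, e\rangle\rangle$. Setting $u = \langle a, b\rangle$ and applying the rewrite with inner bracket $\langle\langle c,d\rangle, e\rangle$ first yields $\langle\langle u, \langle c,d\rangle\rangle, e\rangle - \langle\langle u, e\rangle, \langle c,d\rangle\rangle$. A second application of the rewrite inside each of the two subterms where $\langle c,d\rangle$ still appears produces four iterated brackets, each of the form $\langle\langle\langle u, p\rangle, q\rangle, r\rangle = \langle\langle a, b, p\rangle, q, r\rangle$; inspection confirms these match the right hand side of identity (2), with the signs and permutations of $c, d, e$ mirroring the expansion of $[[c, d], e]$ in an associative algebra, as the remark after the definition suggests. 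The first identity is handled by the parallel strategy applied to $\langle a, \langle\langle b, c\rangle, d\rangle\rangle$ followed by post-bracketing with $e$: the rewrite is used first to split off $d$, then again to split $\langle b, c\rangle$, producing four iterated brackets whose signs and permutations of $b, c, d$ mirror the expansion of $[[b, c], d]$ and reproduce identity (1).

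The main obstacle is purely bookkeeping: each identity reduces to two nested applications of the Leibniz identity, but one must carry six variables and maintain the correct argument order through successive rewrites. No appeal to the Loday-Pirashvili enveloping dialgebra, to the defining identities of an associative dialgebra, or to any deeper structural fact about Leibniz algebras is required beyond the single Leibniz identity itself; the theorem is ultimately a mechanical consequence of applying the right derivation rule twice to each of two nesting patterns.
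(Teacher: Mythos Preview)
Your proposal is correct: two nested applications of the rewritten Leibniz identity $\langle x,\langle y,z\rangle\rangle \equiv \langle\langle x,y\rangle,z\rangle - \langle\langle x,z\rangle,y\rangle$ do indeed produce each of the two Leibniz triple system identities, exactly as you outline.

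Your route, however, differs from the paper's. The paper does not verify the two identities inside the Leibniz algebra at all; instead it invokes the Loday--Pirashvili embedding of the Leibniz algebra into its universal associative enveloping dialgebra, observes that there the iterated dicommutator $\langle\langle a,b\rangle,c\rangle$ expands to $(a\dashv b - b\vdash a)\dashv c - c\vdash(a\dashv b - b\vdash a) = \widehat{a}bc - b\widehat{a}c - c\widehat{a}b + cb\widehat{a}$, and then appeals to the fact already established in the BSO discussion that this dialgebra operation satisfies the defining identities of a Leibniz triple system. Your argument is more elementary and self-contained: it uses only the Leibniz identity and avoids the enveloping dialgebra entirely. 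The paper's argument, on the other hand, fits the surrounding narrative---the point of the section is precisely to illustrate the interplay of the KP and BSO constructions---so routing through the dialgebra is thematically natural there even though it imports heavier machinery.
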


\begin{proof}
This follows from
$\langle a, b, c \rangle = ( a \dashv b - b \vdash a ) \dashv c - c \vdash ( a \dashv b - b \vdash a )$.
\end{proof}

\begin{theorem}
Every identity satisfied by the iterated Leibniz bracket
$\langle \langle a, b \rangle, c \rangle$ in every Leibniz algebra is a consequence of the defining identities
for Leibniz triple systems.
\end{theorem}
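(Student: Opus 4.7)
The plan is to prove this as the ternary analogue of the Loday--Pirashvili theorem, via an enveloping construction. Let $T$ be an arbitrary Leibniz triple system with operation $\{-,-,-\}$. I would construct a universal enveloping Leibniz algebra $U(T)$ together with a linear map $\iota\colon T\to U(T)$ such that
\[
\bigl\langle\langle \iota(a),\iota(b)\rangle,\iota(c)\bigr\rangle = \iota(\{a,b,c\}),
\]
and prove that $\iota$ is injective. Once that is done, the theorem follows quickly: given a multilinear polynomial $f(a_1,\dots,a_n)$ in the iterated bracket that vanishes in every Leibniz algebra, evaluate it in $U(T_{\mathrm{free}})$ where $T_{\mathrm{free}}$ is the free Leibniz triple system on $n$ generators; the image lies in $\iota(T_{\mathrm{free}})$ and must be zero, so by injectivity of $\iota$ the identity $f\equiv0$ already holds in $T_{\mathrm{free}}$, meaning $f$ is a consequence of the defining identities. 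The restriction to multilinear identities is harmless under the running hypothesis $p>n$ from the introduction.

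First, I would build $U(T)$ as the quotient of the free Leibniz algebra $\mathrm{Leib}(T)$ on the underlying vector space of $T$ by the relations $\langle\langle a,b\rangle,c\rangle-\{a,b,c\}=0$ for all $a,b,c\in T$. The two defining identities of a Leibniz triple system were designed to mimic the expansions of $[[b,c],d]$ and $[[c,d],e]$, so one must check that these identities are exactly what is needed for the relations above to be compatible with the Leibniz identity in $\mathrm{Leib}(T)$; this is a direct expansion, writing each inner triple as an iterated Leibniz bracket and comparing with the expanded Leibniz identity on the corresponding degree-$5$ Leibniz monomials. The universal property of $U(T)$ among pairs $(L,\varphi\colon T\to L)$ of Leibniz algebras with a Leibniz-triple map is then automatic from the quotient construction.

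Second -- and this is the main obstacle -- I need to show that $\iota\colon T\to U(T)$ is injective. The cleanest route is to exhibit a faithful representation of $T$ inside something known to embed into a Leibniz algebra. Concretely, I would compose with the Loday--Pirashvili embedding $U(T)\hookrightarrow U_{\mathrm{diAs}}(U(T))$ into the universal associative enveloping dialgebra and then use the explicit normal form for associative dialgebras (Lemma~\ref{lemmanormalform}) to read off a spanning set for $\iota(T)$; matching this with an a~priori spanning set for $T$ coming from the LTS axioms shows equality of dimensions in each multilinear component. An alternative is a Gr\"obner--Shirshov argument in the style of Bokut--Kolesnikov: order Leibniz monomials and check that the finitely many compositions of intersection and inclusion between the two defining relators of $U(T)$ and the Leibniz identity all reduce to zero modulo themselves. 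Either route turns the problem into a bounded, checkable computation in degree at most~$5$, mirroring the BSO verification already performed in the free case.

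Third, I would assemble the pieces: by the previous theorem every subsystem of a Leibniz algebra closed under the iterated Leibniz bracket satisfies the LTS identities, giving one inclusion of T-ideals; the enveloping construction and the injectivity of $\iota$ give the reverse inclusion. I expect step two to be where all the difficulty sits, because steps one and three are formal once the embedding is in hand; the delicate point in step two is the confluence of the rewriting induced by the two LTS identities, since the right-hand sides are not themselves iterated brackets and so triggering a relator inside a larger Leibniz monomial can create new rewrites that must be shown to converge.
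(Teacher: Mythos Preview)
Your proposal is correct and is essentially the same approach as the paper's: the paper's proof simply cites the construction in \cite{BSO2} of universal Leibniz envelopes for Leibniz triple systems, and your outline reconstructs exactly that argument, correctly identifying the injectivity of $\iota\colon T\to U(T)$ as the substantive step and offering the standard routes (PBW-type normal forms or a Gr\"obner--Shirshov confluence check) to establish it.
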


\begin{proof}
This follows from the construction in \cite{BSO2} of universal Leibniz envelopes for Leibniz triple systems.
\end{proof}

The next result from \cite{BSO2} generalizes the classical result that the associator in a Jordan algebra
satisfies the defining identities for Lie triple systems.

\begin{theorem}
Let $L$ be a subspace of a Jordan dialgebra which is closed under the associator $(a,b,c)$.
Then $L$ is a Leibniz triple system with the trilinear operation defined to be the permuted associator
$\langle a, b, c \rangle = (a,c,b)$.
\end{theorem}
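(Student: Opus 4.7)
The plan is to verify the two defining identities of a Leibniz triple system directly from the axioms of a Jordan dialgebra, applied to the permuted associator $\langle a,b,c\rangle := (a,c,b) = (a\star c)\star b - a\star(c\star b)$. Since $L$ is closed under the associator, it suffices to show that these two degree-$5$ multilinear identities in $\langle-,-,-\rangle$ are satisfied by every Jordan dialgebra. This is the dialgebra analogue of the classical fact that the associator of a Jordan algebra obeys the Lie triple system axioms, and I expect the argument to parallel the classical one while additionally requiring the right Osborn identity, not just the right Jordan identity.

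First I would multilinearize the two non-linear axioms: the right Jordan identity $(b\star a^2)\star a \equiv (b\star a)\star a^2$ becomes a multilinear identity of degree $4$ in $b,a_1,a_2,a_3$, and the right Osborn identity $\langle a,b,c^2\rangle \equiv 2\langle a\star c,b,c\rangle$ becomes a multilinear identity of degree $4$ in $a,b,c_1,c_2$. Together with right commutativity (already multilinear in degree $3$), these and their $S_n$-orbits generate, after substituting in one additional variable, the full multilinear ideal of Jordan-dialgebra consequences in degree~$5$. Next I would expand each Leibniz triple system identity completely in $\star$: each triple $\langle x,y,z\rangle$ is a difference of two $\star$-monomials, so the nested expressions unfold into formal $\mathbb{Z}$-linear combinations of $\star$-monomials of length~$5$ in five distinct variables, living in the $\mathbb{F}S_5$-module spanned by the $14$ binary association types times the $120$ permutations. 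This reduces the proof to a finite linear-algebra check: each identity-to-be-proved should lie in the submodule of consequences. That verification is naturally carried out by computer algebra, as done in \cite{BSO2}.

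The main obstacle I expect is the combinatorial bulk rather than any conceptual difficulty: with only right commutativity the association types do not collapse the way they do in the classical Jordan-algebra proof, and the right Osborn identity is indispensable — playing the same \emph{ad hoc} role here as it did when the Jordan-dialgebra axioms themselves were obtained by KP conversion of the Jordan identity. A cleaner alternative I would try first is to verify the seven KP identities \eqref{lts1}--\eqref{lts2} in place of the two (they are equivalent, as noted just before the definition of Leibniz triple system above) and to deduce them from the classical associator-in-a-Jordan-algebra theorem by arguing that the KP conversion of identities is compatible with the BSO conversion of operations on this particular input; this is essentially the KP/BSO compatibility conjecture mentioned in the abstract, and if it applies here, the dialgebra theorem would follow from its classical counterpart with no new computation.
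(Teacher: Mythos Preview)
The paper does not actually supply a proof of this theorem: it is stated as ``the next result from \cite{BSO2}'' and no argument is given in the text. So there is no in-paper proof to compare your proposal against.

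That said, your plan is sound and is precisely the methodology the paper uses for the parallel result in Section~6 (that a Jordan dialgebra becomes a Jordan triple disystem under two trilinear operations). There the authors build the block matrix $E = \left[\begin{smallmatrix} R & O \\ X & I \end{smallmatrix}\right]$, with $R$ containing the lifted Jordan-dialgebra identities and $X$ the expansions of the ternary monomials, compute its row canonical form, and read off the identities from the rows whose leading $1$s lie in the right-hand block. Your proposal---multilinearize the right Jordan and right Osborn identities, lift them together with right commutativity to degree~$5$, expand the two Leibniz-triple identities in $\star$, and check membership in the resulting $S_5$-submodule---is exactly this computation specialized to the permuted associator. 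Your remark that right Osborn is indispensable is also correct in spirit: right commutativity plus right Jordan alone do not collapse enough association types, and the classical Jordan-algebra argument relies on commutativity in a way that fails here.

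Your alternative route via KP/BSO compatibility is more speculative: at the time of this paper that compatibility is only a conjecture (Section~8), so you cannot invoke it as a proof device. Stick with the direct linear-algebra verification; that is what \cite{BSO2} does and what the paper's Section~6 template provides.
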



\section{Jordan triple disystems}

We consider the dialgebra analogue of the variety of Jordan triple systems.
This section is a summary of Bremner, Felipe and S\'anchez-Ortega \cite{BFSO}.
We assume that the base field $\mathbb{F}$ does not have characteristic 2, 3 or 5.

\begin{definition}
A \textbf{Jordan triple system} is a vector space $T$ with a trilinear operation $T \times T \times T \to T$ denoted $\{-,-,-\}$
satisfying these identities:
  \allowdisplaybreaks
  \begin{align*}
  &
  \{a,b,c\} - \{c,b,a\} \equiv 0,
  \\
  &
  \{a,b,\{c,d,e\}\} - \{\{a,b,c\},d,e\} + \{c,\{b,a,d\},e\} - \{c,d,\{a,b,e\}\} \equiv 0.
  \end{align*}
These identities are satisfied by the Jordan triple product in any associative algebra.
\end{definition}

\subsection{KP algorithm}

We first consider Part 1 of the algorithm.
In the identity of degree 3, we make $a$, $b$, $c$ in turn the central argument and obtain
  \[
  \{a,b,c\}_1 - \{c,b,a\}_3 \equiv 0,
  \quad
  \{a,b,c\}_2 - \{c,b,a\}_2 \equiv 0,
  \quad
  \{a,b,c\}_3 - \{c,b,a\}_1 \equiv 0.
  \]
The third operation is superfluous and the second is symmetric in its first and third arguments.
In the identity of degree 5, we make $a,b,c,d,e$ in turn the central argument.
Replacing $\{a,b,c\}_3$ by $\{c,b,a\}_1$ in these five identities gives
  \allowdisplaybreaks
  \begin{align*}
  &
  \{ a, b, \{ c, d, e \}_1 \}_1 -
  \{ \{ a, b, c \}_1, d, e \}_1 +
  \{ c, \{ b, a, d \}_2, e \}_2 -
  \{ \{ a, b, e \}_1, d, c \}_1
  \equiv 0,
  \\
  &
  \{ a, b, \{ c, d, e \}_1 \}_2 -
  \{ \{ a, b, c \}_2, d, e \}_1 +
  \{ c, \{ b, a, d \}_1, e \}_2 -
  \{ \{ a, b, e \}_2, d, c \}_1
  \equiv 0,
  \\
  &
  \{ \{ c, d, e \}_1, b, a \}_1 -
  \{ \{ c, b, a \}_1, d, e \}_1 +
  \{ c, \{ b, a, d \}_1, e \}_1 -
  \{ c, d, \{ a, b, e \}_1 \}_1
  \equiv 0,
  \\
  &
  \{ \{ c, d, e \}_2, b, a \}_1 -
  \{ \{ c, b, a \}_1, d, e \}_2 +
  \{ c, \{ d, a, b \}_1, e \}_2 -
  \{ c, d, \{ a, b, e \}_1 \}_2
  \equiv 0,
  \\
  &
  \{ \{ e, d, c \}_1, b, a \}_1 -
  \{ e, d, \{ c, b, a \}_1 \}_1 +
  \{ e, \{ d, a, b \}_1, c \}_1 -
  \{ \{ e, b, a \}_1, d, c \}_1
  \equiv 0.
  \end{align*}
Part 2 of the algorithm produces the following identities, in which we have replaced $\{a,b,c\}_3$ by $\{c,b,a\}_1$:
  \allowdisplaybreaks
\begin{align*}
&
\{ a, \{ b, c, d \}_1, e \}_1 \equiv
\{ a, \{ b, c, d \}_2, e \}_1 \equiv
\{ a, \{ d, c, b \}_1, e \}_1,
\\
&
\{ a, b, \{ c, d, e \}_1 \}_1 \equiv
\{ a, b, \{ c, d, e \}_2 \}_1 \equiv
\{ a, b, \{ e, d, c \}_1 \}_1,
\\
&
\{ \{ a, b, c \}_1, d, e \}_2 \equiv
\{ \{ a, b, c \}_2, d, e \}_2 \equiv
\{ \{ c, b, a \}_1, d, e \}_2,
\\
&
\{ a, b, \{ c, d, e \}_1 \}_2 \equiv
\{ a, b, \{ c, d, e \}_2 \}_2 \equiv
\{ a, b, \{ e, d, c \}_1 \}_2.
\end{align*}

\begin{definition}
A \textbf{Jordan triple disystem} is a vector space
with trilinear operations $\{-,-,-\}_1$ and $\{-,-,-\}_2$
satisfying these eight identities:
  \allowdisplaybreaks
\begin{align*}
&
\{a,b,c\}_2 \equiv \{c,b,a\}_2,
\qquad
\{ \{ a, b, c \}_1, d, e \}_2 \equiv
\{ \{ a, b, c \}_2, d, e \}_2,
\\
&
\{ a, \{ b, c, d \}_1, e \}_1 \equiv
\{ a, \{ b, c, d \}_2, e \}_1,
\qquad
\{ a, b, \{ c, d, e \}_1 \}_1 \equiv
\{ a, b, \{ c, d, e \}_2 \}_1,
\\
&
\{ \{ e, d, c \}_1, b, a \}_1
\equiv
\{ \{ e, b, a \}_1, d, c \}_1
-
\{ e, \{ d, a, b \}_1, c \}_1
+
\{ e, d, \{ c, b, a \}_1 \}_1,
\\
&
\{ \{ e, d, c \}_2, b, a \}_1
\equiv
\{ \{ e, b, a \}_1, d, c \}_2
-
\{ e, \{ d, a, b \}_1, c \}_2
+
\{ e, d, \{ c, b, a \}_1 \}_2,
\\
&
\{ a, b, \{ c, d, e \}_1 \}_1
\equiv
\{ \{ a, b, c \}_1, d, e \}_1
-
\{ c, \{ b, a, d \}_2, e \}_2
+
\{ \{ a, b, e \}_1, d, c \}_1,
\\
&
\{ a, b, \{ c, d, e \}_1 \}_2
\equiv
\{ \{ a, b, c \}_2, d, e \}_1
-
\{ c, \{ b, a, d \}_1, e \}_2
+
\{ \{ a, b, e \}_2, d, c \}_1.
\end{align*}
\end{definition}

\subsection{BSO algorithm}

We saw in Example \ref{jtsexample} that we need only two operations,
  \[
  (a,b,c)_1 = \widehat{a}bc + cb\widehat{a},
  \qquad
  (a,b,c)_2 = a\widehat{b}c + c\widehat{b}a.
  \]
We use computer algebra to find the identities of low degree for these operations.

\begin{lemma}
Operation $(-,-,-)_1$ satisfies no polynomial identity of degree 3.
\end{lemma}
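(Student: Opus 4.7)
The plan is to reduce the claim to a finite linear-independence check inside the free associative dialgebra on three generators. A multilinear polynomial identity of degree $3$ for $(-,-,-)_1$ is a nonzero combination $\sum_{\sigma\in S_3}x_\sigma\,(a_{\sigma(1)},a_{\sigma(2)},a_{\sigma(3)})_1$ that vanishes identically in every associative dialgebra; by multilinearity it suffices to verify vanishing with $a,b,c$ taken as the three free generators.

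First I would expand each of the six permuted operations in the normal-form basis of the free associative dialgebra described after Lemma~\ref{lemmanormalform}. Since $(x,y,z)_1=\widehat{x}yz+zy\widehat{x}$, each triple $(a_{\sigma(1)},a_{\sigma(2)},a_{\sigma(3)})_1$ is already a sum of exactly two normal-form monomials: one with the centre at position~$1$ (a leftmost-hat monomial) and one with the centre at position~$3$ (a rightmost-hat monomial).

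The crux of the argument is the observation that, as $\sigma$ ranges over $S_3$, the resulting $12$ normal-form monomials are pairwise distinct. A normal-form monomial of degree $3$ is determined by its underlying permutation together with the position of its centre, so it suffices to check that the six (permutation, centre-position) pairs coming from the leftmost terms are distinct, that the same holds for the six pairs coming from the rightmost terms, and that no leftmost pair coincides with any rightmost pair. The first two statements are trivial since $\sigma\mapsto\sigma$ is injective, and the third is immediate by inspection of the six explicit sums. Because distinct normal-form monomials are linearly independent in the free associative dialgebra, the six elements $(a_{\sigma(1)},a_{\sigma(2)},a_{\sigma(3)})_1$ are linearly independent, and the only relation among them is trivial.

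There is essentially no serious obstacle beyond this bookkeeping; the argument reduces to the explicit description of the normal-form basis combined with a one-sentence case check. The only point at which I would slow down is the verification that no leftmost-hat monomial produced by some $\sigma$ happens to equal a rightmost-hat monomial produced by some other $\tau$, but this is ruled out immediately because the centre position already distinguishes them.
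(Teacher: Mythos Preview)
Your argument is correct and is essentially the same as the paper's: both expand the six permuted triples $(a_{\sigma(1)},a_{\sigma(2)},a_{\sigma(3)})_1$ in the normal-form basis of the free associative dialgebra and check that the resulting expansions are linearly independent. The paper phrases this as the vanishing of the nullspace of the explicit $18\times 6$ coefficient matrix, while you observe more directly that the twelve normal-form monomials appearing are pairwise distinct; these are two ways of saying the same thing.
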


\begin{proof}
An identity is a linear combination of the six permutations of $(a,b,c)_1$:
  \allowdisplaybreaks
  \begin{align*}
  &
  x_1 (a,b,c)_1 +
  x_2 (a,c,b)_1 +
  x_3 (b,a,c)_1 +
  x_4 (b,c,a)_1 +
  x_5 (c,a,b)_1 +
  x_6 (c,b,a)_1
  \equiv 0.
  \end{align*}
We expand each ternary monomial to obtain a linear combination of the 18 multilinear dialgebra monomials of degree 3
ordered as follows:
\[
\widehat{a}bc, \,
\widehat{a}cb, \,
\widehat{b}ac, \,
\widehat{b}ca, \,
\widehat{c}ab, \,
\widehat{c}ba, \,
a\widehat{b}c, \,
a\widehat{c}b, \,
b\widehat{a}c, \,
b\widehat{c}a, \,
c\widehat{a}b, \,
c\widehat{b}a, \,
ab\widehat{c}, \,
ac\widehat{b}, \,
ba\widehat{c}, \,
bc\widehat{a}, \,
ca\widehat{b}, \,
cb\widehat{a}.
\]
We construct an $18 \times 6$ matrix $E$ whose $(i,j)$ entry is
the coefficient of the $i$-th dialgebra monomial in the expansion of the $j$-th diproduct monomial:
\[ \tiny
E^t
=
\left[
\begin{array}{cccccccccccccccccc}
1 & . & . & . & . & . & . & . & . & . & . & . & . & . & . & . & . & 1 \\
. & 1 & . & . & . & . & . & . & . & . & . & . & . & . & . & 1 & . & . \\
. & . & 1 & . & . & . & . & . & . & . & . & . & . & . & . & . & 1 & . \\
. & . & . & 1 & . & . & . & . & . & . & . & . & . & 1 & . & . & . & . \\
. & . & . & . & 1 & . & . & . & . & . & . & . & . & . & 1 & . & . & . \\
. & . & . & . & . & 1 & . & . & . & . & . & . & 1 & . & . & . & . & .
\end{array}
\right]
\]
The coefficient vectors of the polynomial identities satisfied by $(-,-,-)_1$
are the vectors in the nullspace of $E$, which is zero.
\end{proof}

\begin{lemma} \label{operation2degree3}
Every polynomial identity of degree 3 satisfied by operation $(-,-,-)_2$
is a consequence of $(a,b,c)_2 \equiv (c,b,a)_2$.
\end{lemma}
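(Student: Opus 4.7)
The plan is to mimic the matrix-rank argument of the preceding lemma, but accounting for the fact that $(-,-,-)_2$ now produces monomials of the form $x\widehat{y}z$ only (the hat sits on the middle position). I would first set up the $18\times 6$ expansion matrix $E$ whose $j$-th column is the expansion of the $j$-th permutation of $(a,b,c)_2$ (in the order $(a,b,c)_2,(a,c,b)_2,(b,a,c)_2,(b,c,a)_2,(c,a,b)_2,(c,b,a)_2$) as a vector in the 18-dimensional space of multilinear dialgebra monomials of degree 3 listed as in the previous lemma. Because every expansion only uses the six monomials with a middle hat, twelve rows of $E$ are zero, and the problem reduces to the $6\times 6$ submatrix $E'$ indexed by the monomials $a\widehat{b}c$, $a\widehat{c}b$, $b\widehat{a}c$, $b\widehat{c}a$, $c\widehat{a}b$, $c\widehat{b}a$.

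Next I would read off the columns of $E'$ directly from the definition $(x,y,z)_2 = x\widehat{y}z + z\widehat{y}x$. This immediately shows three pairs of equal columns: column $(a,b,c)_2$ equals column $(c,b,a)_2$, column $(a,c,b)_2$ equals column $(b,c,a)_2$, and column $(b,a,c)_2$ equals column $(c,a,b)_2$. Hence $\mathrm{rank}(E) = \mathrm{rank}(E') = 3$, and $\dim\ker(E) = 3$.

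Then I would exhibit three explicit vectors in $\ker(E)$, namely the coefficient vectors of
\[
(a,b,c)_2 - (c,b,a)_2,
\quad
(a,c,b)_2 - (b,c,a)_2,
\quad
(b,a,c)_2 - (c,a,b)_2,
\]
which are obviously linearly independent (each involves a disjoint pair of standard basis vectors in $\mathbb{F}^6$). Since these three identities are all obtained from $(a,b,c)_2 \equiv (c,b,a)_2$ by permuting the variables $a,b,c$, they span the space of consequences of that identity. Comparing dimensions, this span coincides with $\ker(E)$, which by construction is the space of polynomial identities of degree 3 satisfied by $(-,-,-)_2$.

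I do not anticipate any real obstacle: the argument is a routine linear algebra computation on a small matrix, and the key observation — that the definition of $(x,y,z)_2$ is manifestly symmetric in $x$ and $z$ — already forces three columns of $E'$ to coincide with the other three. The only thing to double-check is that no extra coincidences occur among the remaining columns, which is immediate since the three distinct columns involve disjoint pairs of monomials.
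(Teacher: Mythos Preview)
Your proposal is correct and follows essentially the same approach as the paper: both construct the $18\times 6$ expansion matrix $E$ and determine its nullspace. You add a little extra explanation---observing the manifest symmetry of $(x,y,z)_2$ in $x$ and $z$ to see directly that the columns come in three equal pairs, and that the three distinct columns are supported on disjoint monomials---whereas the paper simply displays $E^t$ and reads off the canonical nullspace basis; but the underlying argument is the same linear-algebra computation.
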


\begin{proof}
Following the same method as in the previous Lemma gives
\[ \tiny
E^t
=
\left[
\begin{array}{cccccccccccccccccc}
. & . & . & . & . & . & 1 & . & . & . & . & 1 & . & . & . & . & . & . \\
. & . & . & . & . & . & . & 1 & . & 1 & . & . & . & . & . & . & . & . \\
. & . & . & . & . & . & . & . & 1 & . & 1 & . & . & . & . & . & . & . \\
. & . & . & . & . & . & . & 1 & . & 1 & . & . & . & . & . & . & . & . \\
. & . & . & . & . & . & . & . & 1 & . & 1 & . & . & . & . & . & . & . \\
. & . & . & . & . & . & 1 & . & . & . & . & 1 & . & . & . & . & . & .
\end{array}
\right]
\]
The canonical basis of the nullspace consists of three vectors representing
the three permutations of the stated identity.
\end{proof}

These computations were extended to degree 5 to produce
a list of identities satisfied by $(-,-,-)_1$ and $(-,-,-)_2$ separately and together,
such that every identity of degree at most 5 satisfied by these operations
follows from the identities in the list.
It can then be verified that these identities are equivalent to the defining identities
for Jordan triple disystems.
In this way we obtain a large class of examples of special Jordan triple disystems.

\begin{theorem}
If $D$ is a subspace of an associative dialgebra which is closed under the Jordan diproducts $(-,-,-)_1$ and $(-,-,-)_2$,
then $D$ is a Jordan triple disystem with respect to these operations.
\end{theorem}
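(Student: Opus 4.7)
The plan is to verify each of the eight defining identities for a Jordan triple disystem directly, by expanding both sides into normal-form monomials in the ambient associative dialgebra, using the formulas $(a,b,c)_1 = \widehat{a}bc + cb\widehat{a}$ and $(a,b,c)_2 = a\widehat{b}c + c\widehat{b}a$ together with the description of the free associative dialgebra: a monomial is determined by its permutation of generators and the position of its center, and the two products simply concatenate and relocate the hat to the left (for $\dashv$) or right (for $\vdash$) factor.

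First I would dispose of the identity $\{a,b,c\}_2 \equiv \{c,b,a\}_2$, which is immediate on substituting the definition, since $(a,b,c)_2 = a\widehat{b}c + c\widehat{b}a$ is manifestly symmetric in $a$ and $c$. Next come the three ``center-swap'' identities of the form $\{\dots,\{\cdot,\cdot,\cdot\}_1,\dots\}_j \equiv \{\dots,\{\cdot,\cdot,\cdot\}_2,\dots\}_j$. These follow from the fact that once the center of the outer operation is fixed, the inner triple appears as an argument whose own hat lies away from the outer center; by the bar identities, a monomial in normal form in the ambient dialgebra is insensitive to the position of a ``non-central'' hat within a nested submonomial, so the subscript on the inner triple is immaterial. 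This is the dialgebra analogue of the collapse that makes $\{\cdot,\cdot,\cdot\}_3$ superfluous in the KP derivation.

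The substantive work is in the four remaining degree-5 identities, each asserting that one triply-nested expression equals an alternating sum of three others. For each such identity I would expand every occurrence of $(-,-,-)_i$ into its two normal-form summands, yielding at most eight normal-form monomials per triple composition, and then compare coefficients of each normal-form monomial (classified by the permutation of $\{a,b,c,d,e\}$ and the position of the center) on the two sides of the identity. This is precisely the machine computation reported in \cite{BFSO}: the nullspace construction used in the proof of Lemma \ref{operation2degree3} for degree 3 extends to a matrix whose columns index the permutations of the diproduct monomials and whose rows index the normal-form monomials of degree 5, and the required identities are shown to hold by direct coefficient comparison (equivalently, they lie in the appropriate column span).

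The main obstacle is not conceptual but combinatorial: each side of a degree-5 identity expands into many normal-form summands, and the bookkeeping of hat positions across three nested ternary operations is unforgiving. Once the expansions are tabulated, each identity is verified by a finite coefficient check; organizing this check efficiently (as was done by computer algebra in \cite{BFSO}) is the only real difficulty, and the theorem follows immediately from the verifications.
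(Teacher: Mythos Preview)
Your proposal is correct and follows essentially the same route as the paper: the paper's proof is the computer-algebra verification from \cite{BFSO} that expands the diproducts into normal-form dialgebra monomials and checks the identities by coefficient comparison, which is exactly the expansion-and-compare strategy you outline. Your conceptual shortcut for the three ``center-swap'' identities---observing that the underlying words of $(b,c,d)_1$ and $(b,c,d)_2$ coincide, so the bar identities erase the distinction whenever the inner triple sits in a non-central slot---is a nice streamlining that the paper does not make explicit, but it is entirely in the spirit of the normal-form argument and does not constitute a genuinely different method.
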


\subsection{Jordan dialgebras and Jordan triple disystems}

A Jordan algebra with product $a \circ b$ becomes a Jordan triple system
by means of the trilinear operation
  \[
  \langle a,b,c \rangle
  =
  (a \circ b) \circ c - (a \circ c) \circ b + a \circ (b \circ c).
  \]
Similarly, a Jordan dialgebra with operation $ab$ becomes a Jordan triple disystem
by means of two trilinear operations; the first is obtained by replacing $a \circ b$ by $ab$:
  \[
  \langle a,b,c \rangle_1 = (ab)c - (ac)b + a(bc),
  \qquad
  \langle a,b,c \rangle_2 = (ba)c + (bc)a - b(ac).
  \]
In a special Jordan dialgebra, we have $ab = a \dashv b + b \vdash a$,
and these two operations reduce (up to a scalar multiple) to the first and second dialgebra operations
in Example \ref{jtsexample}, namely $2 ( \widehat{a}bc + cb\widehat{a} )$ and
$2 ( a\widehat{b}c + c\widehat{b}a )$.
This construction provides a larger class of examples of Jordan triple disystems.

\begin{theorem}
If $D$ is a subspace of a Jordan dialgebra
which is closed under the trilinear operations $\langle -,-,- \rangle_1$ and $\langle -,-,- \rangle_2$,
then $D$ is a Jordan triple disystem with respect to these operations.
\end{theorem}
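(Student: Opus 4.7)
The plan is to verify directly that the two trilinear operations $\langle-,-,-\rangle_1$ and $\langle-,-,-\rangle_2$, defined in an arbitrary Jordan dialgebra, satisfy the eight defining identities of a Jordan triple disystem. The argument parallels the classical verification that $\langle a,b,c\rangle = (ab)c - (ac)b + a(bc)$ turns a Jordan algebra into a Jordan triple system, but it cannot be reduced to that case: as noted earlier in the paper, there exist \emph{special identities} for Jordan dialgebras, so one cannot pass through an associative dialgebra envelope. The verification must be carried out intrinsically from the three axioms of a Jordan dialgebra: right commutativity, the right Jordan identity, and the right Osborn identity.

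Heuristically, the construction is tailor-made to be compatible with the KP framework. The binary Jordan-dialgebra product is the KP image of the Jordan algebra product; the two trilinear operations $\langle-,-,-\rangle_i$ are the two ``KP copies'' of the classical Jordan triple product obtained by choosing the central variable in the first or second argument; and the eight defining identities of a Jordan triple disystem are exactly the KP image of the two Jordan triple system axioms. One therefore \emph{expects} the eight identities to hold, and the substance of the proof is to confirm this expectation at the level of the bilinear operation.

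Concretely, I would proceed as follows. First, substitute the definitions
\[
\langle a,b,c\rangle_1 = (ab)c - (ac)b + a(bc),
\qquad
\langle a,b,c\rangle_2 = (ba)c + (bc)a - b(ac)
\]
into each of the eight defining identities of a Jordan triple disystem, expanding everything into a multilinear polynomial of degree five in the single bilinear Jordan-dialgebra product. Each of the resulting expressions is an element of the multilinear component of degree five of the free nonassociative algebra on five generators. Next, build the $T$-ideal $I$ generated by the three defining Jordan-dialgebra identities (right commutativity, right Jordan, right Osborn), by taking all partial linearizations, all permutations, and all placements into degree-five monomials. The task reduces to showing that each of the eight expanded polynomials lies in the multilinear component of $I$ in degree five in five variables.

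In practice this last step is exactly the computer-algebra calculation used in Lemma~\ref{operation2degree3} and throughout \cite{BFSO}: one forms a matrix whose columns span the multilinear consequences of the Jordan-dialgebra axioms in the required degree, row-reduces to produce a canonical basis of this subspace, and checks membership of each of the eight test polynomials by linear algebra. The main obstacle is the sheer size of the computation: the space of multilinear monomials of degree five in a nonassociative binary operation is large, and the number of consequences of the Jordan-dialgebra axioms that must be enumerated to saturate the relevant degree grows quickly. Once the bookkeeping is organized (for instance, by exploiting the $S_5$-module structure on multilinear polynomials), the verification is mechanical, and the eight identities drop out as consequences of right commutativity, right Jordan, and right Osborn, completing the proof.
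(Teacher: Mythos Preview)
Your proposal is correct and rests on the same computational linear-algebra framework as the paper, but the two proofs are organized differently. You propose the direct verification: expand each of the eight JTD axioms in the binary Jordan-dialgebra product and test membership of the resulting degree-5 multilinear polynomials in the $S_5$-submodule generated by the consequences of right commutativity, right Jordan, and right Osborn. The paper instead works in the opposite direction: it builds the block matrix $E = \left[\begin{smallmatrix} R & O \\ X & I \end{smallmatrix}\right]$ (with $R$ the Jordan-dialgebra consequences, $X$ the expansions of the ternary monomials), row-reduces, and reads off from the lower-right block \emph{all} identities in degree $\le 5$ satisfied by $\langle-,-,-\rangle_1$ and $\langle-,-,-\rangle_2$; it then checks that these span the same $S_5$-module as the eight JTD axioms. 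For the theorem as stated your route is sufficient and more economical. The paper's route yields the stronger conclusion that the eight JTD axioms are a \emph{complete} list of identities in degree $\le 5$, not merely a valid one---information that is needed elsewhere in \cite{BFSO} but not for this theorem.
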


\begin{proof}
This is a sketch of a computational proof of this result, starting with degree 3.
We must show that every polynomial identity of degree 3 satisfied by
$\langle a,b,c \rangle_1$ and $\langle a,b,c \rangle_2$
follows from the symmetry of $\langle a,b,c \rangle_2$ in its first and third arguments.
We construct an $18 \times 24$ matrix $E$ in which
columns 1--12 correspond to the 12 multilinear monomials of degree 3
in the free nonassociative algebra,
  \[
  (ab)c, \, (ac)b, \, (ba)c, \, (bc)a, \, (ca)b, \, (cb)a, \,
  a(bc), \, a(cb), \, b(ac), \, b(ca), \, c(ab), \, c(ba),
  \]
and columns 13--24 correspond to the 12 trilinear monomials of degree 3 in the trilinear operations
$\langle \cdots \rangle_1$ and $\langle \cdots \rangle_2$,
  \[
  \begin{array}{llllll}
  \langle a,b,c \rangle_1, &\;
  \langle a,c,b \rangle_1, &\;
  \langle b,a,c \rangle_1, &\;
  \langle b,c,a \rangle_1, &\;
  \langle c,a,b \rangle_1, &\;
  \langle c,b,a \rangle_1,
  \\
  \langle a,b,c \rangle_2, &\;
  \langle a,c,b \rangle_2, &\;
  \langle b,a,c \rangle_2, &\;
  \langle b,c,a \rangle_2, &\;
  \langle c,a,b \rangle_2, &\;
  \langle c,b,a \rangle_2.
  \end{array}
  \]
The matrix $E$ has the following block structure,
  \[
  E
  =
  \left[
  \begin{array}{cc}
  R & O \\
  X & I
  \end{array}
  \right],
  \]
and its entries are determined as follows:
  \begin{itemize}
  \item the upper left $6 \times 12$ block $R$ contains the coefficient vectors of
  the permutations of the right commutative identity;
  \item the lower left $12 \times 12$ block $X$ contains the coefficient vectors of
  the expansions of the operations $\langle -,-,- \rangle_1$ and $\langle -,-,- \rangle_2$;
  \item the upper right $6 \times 12$ block $O$ contains the zero matrix;
  \item the lower right $12 \times 12$ block $I$ contains the identity matrix.
  \end{itemize}
This matrix is displayed in Table \ref{JTSmatrixE} using $\cdot, +, -$ for $0, 1, -1$.
The row canonical form is displayed in Table \ref{JTSmatrixErcf} using $\ast$ for $\tfrac12$;
the rank is 15.
The dividing line between the upper and lower parts of the row canonical form lies immediately above row 13:
the uppermost row whose leading 1 is in the right part of the matrix.
The rows below this line represent the dependence relations among the expansions of the trilinear monomials
which hold as a result of the right commutative identities.
The rows of the lower right $3 \times 12$ block represent the
permutations of $\langle a,b,c \rangle_2 - \langle c,b,a \rangle_2 \equiv 0$.
\end{proof}

  \begin{table}
  \begin{center} \small
  \[
  \left[
  \begin{array}{cccccccccccc|cccccccccccc}
  . &\!\!\! . &\!\!\! . &\!\!\! . &\!\!\! . &\!\!\! . &\!\!\! + &\!\!\! - &\!\!\! . &\!\!\! . &\!\!\! . &\!\!\! . &
  . &\!\!\! . &\!\!\! . &\!\!\! . &\!\!\! . &\!\!\! . &\!\!\! . &\!\!\! . &\!\!\! . &\!\!\! . &\!\!\! . &\!\!\! . \\
  . &\!\!\! . &\!\!\! . &\!\!\! . &\!\!\! . &\!\!\! . &\!\!\! - &\!\!\! + &\!\!\! . &\!\!\! . &\!\!\! . &\!\!\! . &
  . &\!\!\! . &\!\!\! . &\!\!\! . &\!\!\! . &\!\!\! . &\!\!\! . &\!\!\! . &\!\!\! . &\!\!\! . &\!\!\! . &\!\!\! . \\
  . &\!\!\! . &\!\!\! . &\!\!\! . &\!\!\! . &\!\!\! . &\!\!\! . &\!\!\! . &\!\!\! + &\!\!\! - &\!\!\! . &\!\!\! . &
  . &\!\!\! . &\!\!\! . &\!\!\! . &\!\!\! . &\!\!\! . &\!\!\! . &\!\!\! . &\!\!\! . &\!\!\! . &\!\!\! . &\!\!\! . \\
  . &\!\!\! . &\!\!\! . &\!\!\! . &\!\!\! . &\!\!\! . &\!\!\! . &\!\!\! . &\!\!\! - &\!\!\! + &\!\!\! . &\!\!\! . &
  . &\!\!\! . &\!\!\! . &\!\!\! . &\!\!\! . &\!\!\! . &\!\!\! . &\!\!\! . &\!\!\! . &\!\!\! . &\!\!\! . &\!\!\! . \\
  . &\!\!\! . &\!\!\! . &\!\!\! . &\!\!\! . &\!\!\! . &\!\!\! . &\!\!\! . &\!\!\! . &\!\!\! . &\!\!\! + &\!\!\! - &
  . &\!\!\! . &\!\!\! . &\!\!\! . &\!\!\! . &\!\!\! . &\!\!\! . &\!\!\! . &\!\!\! . &\!\!\! . &\!\!\! . &\!\!\! . \\
  . &\!\!\! . &\!\!\! . &\!\!\! . &\!\!\! . &\!\!\! . &\!\!\! . &\!\!\! . &\!\!\! . &\!\!\! . &\!\!\! - &\!\!\! + &
  . &\!\!\! . &\!\!\! . &\!\!\! . &\!\!\! . &\!\!\! . &\!\!\! . &\!\!\! . &\!\!\! . &\!\!\! . &\!\!\! . &\!\!\! . \\
  \midrule
  + &\!\!\! - &\!\!\! . &\!\!\! . &\!\!\! . &\!\!\! . &\!\!\! + &\!\!\! . &\!\!\! . &\!\!\! . &\!\!\! . &\!\!\! . &
  + &\!\!\! . &\!\!\! . &\!\!\! . &\!\!\! . &\!\!\! . &\!\!\! . &\!\!\! . &\!\!\! . &\!\!\! . &\!\!\! . &\!\!\! . \\
  - &\!\!\! + &\!\!\! . &\!\!\! . &\!\!\! . &\!\!\! . &\!\!\! . &\!\!\! + &\!\!\! . &\!\!\! . &\!\!\! . &\!\!\! . &
  . &\!\!\! + &\!\!\! . &\!\!\! . &\!\!\! . &\!\!\! . &\!\!\! . &\!\!\! . &\!\!\! . &\!\!\! . &\!\!\! . &\!\!\! . \\
  . &\!\!\! . &\!\!\! + &\!\!\! - &\!\!\! . &\!\!\! . &\!\!\! . &\!\!\! . &\!\!\! + &\!\!\! . &\!\!\! . &\!\!\! . &
  . &\!\!\! . &\!\!\! + &\!\!\! . &\!\!\! . &\!\!\! . &\!\!\! . &\!\!\! . &\!\!\! . &\!\!\! . &\!\!\! . &\!\!\! . \\
  . &\!\!\! . &\!\!\! - &\!\!\! + &\!\!\! . &\!\!\! . &\!\!\! . &\!\!\! . &\!\!\! . &\!\!\! + &\!\!\! . &\!\!\! . &
  . &\!\!\! . &\!\!\! . &\!\!\! + &\!\!\! . &\!\!\! . &\!\!\! . &\!\!\! . &\!\!\! . &\!\!\! . &\!\!\! . &\!\!\! . \\
  . &\!\!\! . &\!\!\! . &\!\!\! . &\!\!\! + &\!\!\! - &\!\!\! . &\!\!\! . &\!\!\! . &\!\!\! . &\!\!\! + &\!\!\! . &
  . &\!\!\! . &\!\!\! . &\!\!\! . &\!\!\! + &\!\!\! . &\!\!\! . &\!\!\! . &\!\!\! . &\!\!\! . &\!\!\! . &\!\!\! . \\
  . &\!\!\! . &\!\!\! . &\!\!\! . &\!\!\! - &\!\!\! + &\!\!\! . &\!\!\! . &\!\!\! . &\!\!\! . &\!\!\! . &\!\!\! + &
  . &\!\!\! . &\!\!\! . &\!\!\! . &\!\!\! . &\!\!\! + &\!\!\! . &\!\!\! . &\!\!\! . &\!\!\! . &\!\!\! . &\!\!\! . \\
  . &\!\!\! . &\!\!\! + &\!\!\! + &\!\!\! . &\!\!\! . &\!\!\! . &\!\!\! . &\!\!\! - &\!\!\! . &\!\!\! . &\!\!\! . &
  . &\!\!\! . &\!\!\! . &\!\!\! . &\!\!\! . &\!\!\! . &\!\!\! + &\!\!\! . &\!\!\! . &\!\!\! . &\!\!\! . &\!\!\! . \\
  . &\!\!\! . &\!\!\! . &\!\!\! . &\!\!\! + &\!\!\! + &\!\!\! . &\!\!\! . &\!\!\! . &\!\!\! . &\!\!\! - &\!\!\! . &
  . &\!\!\! . &\!\!\! . &\!\!\! . &\!\!\! . &\!\!\! . &\!\!\! . &\!\!\! + &\!\!\! . &\!\!\! . &\!\!\! . &\!\!\! . \\
  + &\!\!\! + &\!\!\! . &\!\!\! . &\!\!\! . &\!\!\! . &\!\!\! - &\!\!\! . &\!\!\! . &\!\!\! . &\!\!\! . &\!\!\! . &
  . &\!\!\! . &\!\!\! . &\!\!\! . &\!\!\! . &\!\!\! . &\!\!\! . &\!\!\! . &\!\!\! + &\!\!\! . &\!\!\! . &\!\!\! . \\
  . &\!\!\! . &\!\!\! . &\!\!\! . &\!\!\! + &\!\!\! + &\!\!\! . &\!\!\! . &\!\!\! . &\!\!\! . &\!\!\! . &\!\!\! - &
  . &\!\!\! . &\!\!\! . &\!\!\! . &\!\!\! . &\!\!\! . &\!\!\! . &\!\!\! . &\!\!\! . &\!\!\! + &\!\!\! . &\!\!\! . \\
  + &\!\!\! + &\!\!\! . &\!\!\! . &\!\!\! . &\!\!\! . &\!\!\! . &\!\!\! - &\!\!\! . &\!\!\! . &\!\!\! . &\!\!\! . &
  . &\!\!\! . &\!\!\! . &\!\!\! . &\!\!\! . &\!\!\! . &\!\!\! . &\!\!\! . &\!\!\! . &\!\!\! . &\!\!\! + &\!\!\! . \\
  . &\!\!\! . &\!\!\! + &\!\!\! + &\!\!\! . &\!\!\! . &\!\!\! . &\!\!\! . &\!\!\! . &\!\!\! - &\!\!\! . &\!\!\! . &
  . &\!\!\! . &\!\!\! . &\!\!\! . &\!\!\! . &\!\!\! . &\!\!\! . &\!\!\! . &\!\!\! . &\!\!\! . &\!\!\! . &\!\!\! +
  \end{array}
  \right]
  \]
  \end{center}
  \caption{The matrix $E$}
  \label{JTSmatrixE}
  \end{table}

  \begin{table}
  \begin{center} \small
  \[
  \left[
  \begin{array}{cccccccccccc|cccccccccccc}
  + &\!\! . &\!\! . &\!\! . &\!\! . &\!\! . &\!\! . &\!\! . &\!\! . &\!\! . &\!\! . &\!\! . &
  \ast &\!\! . &\!\! . &\!\! . &\!\! . &\!\! . &\!\! . &\!\! . &\!\! . &\!\! . &\!\! \ast &\!\! . \\
  . &\!\! + &\!\! . &\!\! . &\!\! . &\!\! . &\!\! . &\!\! . &\!\! . &\!\! . &\!\! . &\!\! . &
   . &\!\! \ast &\!\! . &\!\! . &\!\! . &\!\! . &\!\! . &\!\! . &\!\! . &\!\! . &\!\! \ast &\!\! . \\
  . &\!\! . &\!\! + &\!\! . &\!\! . &\!\! . &\!\! . &\!\! . &\!\! . &\!\! . &\!\! . &\!\! . &
   . &\!\! . &\!\! \ast &\!\! . &\!\! . &\!\! . &\!\! . &\!\! . &\!\! . &\!\! . &\!\! . &\!\! \ast \\
  . &\!\! . &\!\! . &\!\! + &\!\! . &\!\! . &\!\! . &\!\! . &\!\! . &\!\! . &\!\! . &\!\! . &
   . &\!\! . &\!\! . &\!\! \ast &\!\! . &\!\! . &\!\! . &\!\! . &\!\! . &\!\! . &\!\! . &\!\! \ast \\
  . &\!\! . &\!\! . &\!\! . &\!\! + &\!\! . &\!\! . &\!\! . &\!\! . &\!\! . &\!\! . &\!\! . &
   . &\!\! . &\!\! . &\!\! . &\!\! \ast &\!\! . &\!\! . &\!\! . &\!\! . &\!\! \ast &\!\! . &\!\! . \\
  . &\!\! . &\!\! . &\!\! . &\!\! . &\!\! + &\!\! . &\!\! . &\!\! . &\!\! . &\!\! . &\!\! . &
   . &\!\! . &\!\! . &\!\! . &\!\! . &\!\! \ast &\!\! . &\!\! . &\!\! . &\!\! \ast &\!\! . &\!\! . \\
  . &\!\! . &\!\! . &\!\! . &\!\! . &\!\! . &\!\! + &\!\! . &\!\! . &\!\! . &\!\! . &\!\! . &
   \ast &\!\! \ast &\!\! . &\!\! . &\!\! . &\!\! . &\!\! . &\!\! . &\!\! . &\!\! . &\!\! . &\!\! . \\
  . &\!\! . &\!\! . &\!\! . &\!\! . &\!\! . &\!\! . &\!\! + &\!\! . &\!\! . &\!\! . &\!\! . &
   \ast &\!\! \ast &\!\! . &\!\! . &\!\! . &\!\! . &\!\! . &\!\! . &\!\! . &\!\! . &\!\! . &\!\! . \\
  . &\!\! . &\!\! . &\!\! . &\!\! . &\!\! . &\!\! . &\!\! . &\!\! + &\!\! . &\!\! . &\!\! . &
   . &\!\! . &\!\! \ast &\!\! \ast &\!\! . &\!\! . &\!\! . &\!\! . &\!\! . &\!\! . &\!\! . &\!\! . \\
  . &\!\! . &\!\! . &\!\! . &\!\! . &\!\! . &\!\! . &\!\! . &\!\! . &\!\! + &\!\! . &\!\! . &
   . &\!\! . &\!\! \ast &\!\! \ast &\!\! . &\!\! . &\!\! . &\!\! . &\!\! . &\!\! . &\!\! . &\!\! . \\
  . &\!\! . &\!\! . &\!\! . &\!\! . &\!\! . &\!\! . &\!\! . &\!\! . &\!\! . &\!\! + &\!\! . &
   . &\!\! . &\!\! . &\!\! . &\!\! \ast &\!\! \ast &\!\! . &\!\! . &\!\! . &\!\! . &\!\! . &\!\! . \\
  . &\!\! . &\!\! . &\!\! . &\!\! . &\!\! . &\!\! . &\!\! . &\!\! . &\!\! . &\!\! . &\!\! + &
   . &\!\! . &\!\! . &\!\! . &\!\! \ast &\!\! \ast &\!\! . &\!\! . &\!\! . &\!\! . &\!\! . &\!\! . \\
  \midrule
  . &\!\! . &\!\! . &\!\! . &\!\! . &\!\! . &\!\! . &\!\! . &\!\! . &\!\! . &\!\! . &\!\! . &
   . &\!\! . &\!\! . &\!\! . &\!\! . &\!\! . &\!\! + &\!\! . &\!\! . &\!\! . &\!\! . &\!\! - \\
  . &\!\! . &\!\! . &\!\! . &\!\! . &\!\! . &\!\! . &\!\! . &\!\! . &\!\! . &\!\! . &\!\! . &
   . &\!\! . &\!\! . &\!\! . &\!\! . &\!\! . &\!\! . &\!\! + &\!\! . &\!\! - &\!\! . &\!\! . \\
  . &\!\! . &\!\! . &\!\! . &\!\! . &\!\! . &\!\! . &\!\! . &\!\! . &\!\! . &\!\! . &\!\! . &
   . &\!\! . &\!\! . &\!\! . &\!\! . &\!\! . &\!\! . &\!\! . &\!\! + &\!\! . &\!\! - &\!\! .
  \end{array}
  \right]
  \]
  \end{center}
  \caption{The row canonical form of $E$}
  \label{JTSmatrixErcf}
  \end{table}

We can extend these computations to degree 5; the matrix $E$ has the same block structure but is much larger.
In degree 5, there are $5!$ permutations of the variables, and
14 association types for a nonassociative binary operation,
  \[
  \begin{array}{lllll}
  (((ab)c)d)e, &\, ((a(bc))d)e, &\, ((ab)(cd))e, &\, (a((bc)d))e, &\, (a(b(cd)))e, \\
  ((ab)c)(de), &\, (a(bc))(de), &\, (ab)((cd)e), &\, (ab)(c(de)), &\, a(((bc)d)e), \\
  a((b(cd))e), &\, a((bc)(de)), &\, a(b((cd)e)), &\, a(b(c(de))),
  \end{array}
  \]
giving 1680 monomials labeling the columns in the left part.
There are 10 association types in degree 5 for two trilinear operations,
assuming that the second operation is symmetric in its first and third arguments:
\[
\begin{array}{llll}
\langle \langle a,b,c\rangle_1,d,e\rangle_1, &\quad
\langle a,\langle b,c,d\rangle_1,e\rangle_1, &\quad
\langle a,b,\langle c,d,e\rangle_1\rangle_1, &\quad
\langle \langle a,b,c\rangle_2,d,e\rangle_2, \\
\langle a,\langle b,c,d\rangle_2,e\rangle_2, &\quad
\langle \langle a,b,c\rangle_2,d,e\rangle_1, &\quad
\langle a,\langle b,c,d\rangle_2,e\rangle_1, &\quad
\langle a,b,\langle c,d,e\rangle_2\rangle_1, \\
\langle \langle a,b,c\rangle_1,d,e\rangle_2, &\quad
\langle a,\langle b,c,d\rangle_1,e\rangle_2.
\end{array}
\]
Using the symmetry of $\langle -,-,- \rangle_2$ we obtain the number of multilinear monomials in each type,
giving $120 + 120 + 120 + 60 + 60 + 60 + 120 + 60 + 60 + 30 = 810$
monomials labeling the columns in the right part.

We next generate all the consequences in degree 5 of the defining identities for Jordan dialgebras.
A multilinear identity $I(a_1,\dots,a_n)$ of degree $n$ produces $n{+}2$ identities of degree $n{+}1$;
we have $n$ substitutions and two multiplications:
  \allowdisplaybreaks
  \begin{align*}
  I( a_1 a_{n+1}, \dots, a_n ),
  \;
  \dots,
  \;
  I( a_1, \dots, a_n a_{n+1} ),
  \;
  I( a_1, \dots, a_n ) a_{n+1},
  \;
  a_{n+1} I( a_1, \dots, a_n ).
  \end{align*}
The right commutative identity of degree 3 produces 5 identities of degree 4, and each of these
produces 6 identities of degree 5, for a total of 30.
The linearized versions of the right Osborn and right Jordan identities of degree 4 each produce
6 identities of degree 5, for a total of 12.
Altogether we have 42 identities of degree 5, and each allows $5!$ permutations of the variables,
for a total of 5040.
The upper left block $R$ of the matrix $E$ has size $5040 \times 1680$.

The lower left block $X$ has size $810 \times 1680$ and contains the coefficients of the expansions
of the ternary monomials.
The upper right block $O$ is the $5040 \times 810$ zero matrix,
and the lower right block $I$ is the $810 \times 810$ identity matrix:
  \[
  E =
  \left[
  \begin{array}{c|c}
  \begin{tabular}{l}
  consequences in degree 5 of \\
  the Jordan dialgebra identities
  \end{tabular}
  &
  \begin{tabular}{l}
  zero matrix
  \end{tabular}
  \\
  \midrule
  \begin{tabular}{l}
  expansions of the monomials \\
  in degree 5 for $\langle \cdots \rangle_1$ and $\langle \cdots \rangle_2$
  \end{tabular}
  &
  \begin{tabular}{l}
  identity matrix
  \end{tabular}
  \end{array}
  \right]
  \]
We compute the row canonical form and find that the rank is 2215.
We ignore the first 1655 rows since their leading 1s are in the left part;
we retain only the 560 rows which have their leading 1s in the right part.
We sort these rows by increasing number of nonzero components.
These rows represent the identities in degree 5 satisfied by the Jordan triple diproducts
in a Jordan dialgebra.

We construct another matrix $M$ with an upper block of size $810 \times 810$ and
a lower block of size $120 \times 810$.
For each of the 560 identities satisfied by the operations
$\langle -,-,- \rangle_1$ and $\langle -,-,- \rangle_2$,
we apply all $5!$ permutations of the variables, store the permuted identities in the lower block,
and compute the row canonical form.
We record the index numbers of the identities which increase the rank:
  \begin{center}
  \begin{tabular}{lrrrrrrrr}
  identity & 1 & 121 & 241 & 301 & 331 & 342 & 451 & 454 \\
  rank & 120 & 240 & 360 & 390 & 450 & 470 & 530 & 560
  \end{tabular}
  \end{center}
We then verify directly that these eight identities generate the same $S_5$-module as
the defining identities for Jordan triple disystems obtained from the KP algorithm.


\section{The cyclic commutator}

In this section, we present some new results about the trilinear operation which we call the \textbf{cyclic commutator},
  \[
  (a,b,c) = abc - bca.
  \]
This operation provides a ``noncommutative'' version of Lie triple systems different from Leibniz triple systems.

\subsection{Polynomial identities}

The next result appears in Bremner and Peresi \cite{BremnerPeresi2} in a slightly different form.

\begin{lemma}
Every multilinear polynomial identity of degree 3 satisfied by the cyclic commutator follows from
the ternary Jacobi identity,
  \[
  (a,b,c) + (b,c,a) + (c,a,b) \equiv 0.
  \]
Every multilinear polynomial identity of degree 5 satisfied by the cyclic commutator follows from
the ternary Jacobi identity and the (right) ternary derivation identity,
  \[
  ((a,b,c),d,e) \equiv ((a,d,e),b,c) + (a,(b,d,e),c) + (a,b,(c,d,e)).
  \]
\end{lemma}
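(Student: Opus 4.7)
The plan is to verify both parts by direct linear-algebra computation in the free associative algebra, following the style used for the analogous lemmas earlier in the paper.

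For the degree $3$ statement, first I would construct the $6 \times 6$ expansion matrix $E$ whose column indexed by $\pi \in S_3$ records the coefficients of $(a_{\pi(1)}, a_{\pi(2)}, a_{\pi(3)}) = a_{\pi(1)} a_{\pi(2)} a_{\pi(3)} - a_{\pi(2)} a_{\pi(3)} a_{\pi(1)}$ in the ordered basis of the six permutations of $abc$. Each column has the form $e_\pi - e_{\pi\sigma}$ where $\sigma = (1\,2\,3)$, and these vectors span the codimension-$2$ subspace of $\mathbb{F}^6$ on which the coefficient sum over each $\langle\sigma\rangle$-coset vanishes; hence the nullspace of $E$ has dimension $2$. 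The two cyclic sums $(a,b,c) + (b,c,a) + (c,a,b)$ and $(b,a,c) + (a,c,b) + (c,b,a)$ --- both instances of the ternary Jacobi identity --- are linearly independent nullvectors and therefore span the nullspace.

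For the degree $5$ statement, the approach has three stages. First, enumerate the three association types for ternary monomials of degree $5$ (inner node in argument position $1$, $2$, or $3$ of the outer node), giving $3 \cdot 5! = 360$ multilinear monomials in the cyclic commutator; then build the expansion matrix $E$ of size $120 \times 360$ whose rows are indexed by the permutations of $abcde$ and whose columns record the expansions under $(x,y,z) = xyz - yzx$. Each column has exactly $4$ nonzero entries, and the nullspace $N \subseteq \mathbb{F}^{360}$ is the full space of multilinear polynomial identities of degree $5$ satisfied by the cyclic commutator. Second, generate a spanning set for the $S_5$-submodule $C \subseteq \mathbb{F}^{360}$ of consequences of the two hypothesized identities: all $120$ permutations of the ternary derivation identity together with all lifts of the degree-$3$ Jacobi identity to degree $5$, obtained by substituting a ternary monomial in each of its three variable positions (using the two fresh variables $a_4, a_5$) and by embedding it as the $j$-th argument of an outer ternary operation for $j = 1, 2, 3$ with the remaining two arguments taken from $\{a_4, a_5\}$, followed by all $S_5$-permutations of the variables. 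Third, reduce both matrices to row canonical form and verify $\dim C = \dim N$; since $C \subseteq N$ holds automatically, equality then gives the result.

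The main obstacle is the bookkeeping in the second stage: one must enumerate every substitution and embedding used to lift Jacobi from degree $3$ to degree $5$, apply all $S_5$-permutations, and check by an iterated rank computation that the resulting module is stable under the $S_5$-action so that no consequence has been missed. Once the two ranks agree, the computation constitutes a rigorous proof under the standing hypothesis that $\operatorname{char} \mathbb{F}$ is zero or larger than $5$, which justifies working with multilinear identities and using the semisimplicity of $\mathbb{F} S_5$.
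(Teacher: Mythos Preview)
The paper does not actually prove this lemma: it simply attributes the result to Bremner and Peresi \cite{BremnerPeresi2} and moves on to degree~7. So there is no ``paper's own proof'' to compare against.

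That said, your proposal is correct and is exactly in the spirit of the computational proofs given elsewhere in the paper (compare the proofs of the degree-3 lemmas for the Jordan diproducts in Section~6, and the degree-7 computation for the cyclic commutator itself in Section~7). Your degree-3 argument is clean and complete as written: the columns of $E$ are $e_\pi - e_{\pi\sigma}$ with $\sigma=(1\,2\,3)$, the image is the subspace of $\mathbb{F}^6$ on which the coordinate sum over each $\langle\sigma\rangle$-coset vanishes, so $\operatorname{rank} E = 4$ and the two permutations of the ternary Jacobi identity span the $2$-dimensional nullspace. For degree~5, your three stages are precisely what the paper does in degree~7 (see equations \eqref{lift3} for the six liftings of the Jacobi identity to degree~5, which match your substitutions and embeddings exactly), so the method is sound; the only thing left is to carry out the rank computation and record that the ranks agree.
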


We now extend these computations to degree 7.
For a general trilinear operation, the number of association types in (odd) degree $n$
equals the number of ternary trees with $n$ leaf nodes; see sequence A001764 in Sloane \cite{Sloane}
and Example 5 on page 360 of Graham et al.~\cite{GKP}.
There is a simple formula for this number:
  \[
  t(k) = \frac{1}{2k+1} \binom{3k}{k}
  \qquad
  (n = 2k+1).
  \]
The first few values are as follows:
  \[
  \begin{array}{rrrrrrrrrrrr}
  k & 1 & 2 & 3 & 4 & 5 & 6 & 7 & 8 & 9 & 10 \\
  n & 3 & 5 & 7 & 9 & 11 & 13 & 15 & 17 & 19 & 21 \\
  t(k) & 1 & 3 & 12 & 55 & 273 & 1428 & 7752 & 43263 & 246675 & 1430715
  \end{array}
  \]
We order the 12 ternary association types in degree 7 as follows:
  \[
  \begin{array}{lll}
  (-,-,(-,-,(-,-,-))), &
  (-,-,(-,(-,-,-),-)), &
  (-,-,((-,-,-),-,-)),
  \\
  (-,(-,-,-),(-,-,-)), &
  (-,(-,-,(-,-,-)),-), &
  (-,(-,(-,-,-),-),-),
  \\
  (-,((-,-,-),-,-),-), &
  ((-,-,-),-,(-,-,-)), &
  ((-,-,-),(-,-,-),-),
  \\
  ((-,-,(-,-,-)),-,-), &
  ((-,(-,-,-),-),-,-), &
  (((-,-,-),-,-),-,-).
  \end{array}
  \]
Using the ternary Jacobi identity, we can eliminate types 3, 7, 9, 10, 11, 12
by means of the following equations and retain only types 1, 2, 4, 5, 6, 8:
  \begin{equation}
  \label{reducetype}
  \left\{
  \begin{array}{l}
  (a,b,((c,d,e),f,g)) = - (a,b,(f,g,(c,d,e))) - (a,b,(g,(c,d,e),f)),
  \\
  (a,((b,c,d),e,f),g) = - (a,(e,f,(b,c,d)),g) - (a,(f,(b,c,d),e),g),
  \\
  ((a,b,c),(d,e,f),g) = - ((d,e,f),g,(a,b,c)) - (g,(a,b,c),(d,e,f)),
  \\
  ((a,b,(c,d,e)),f,g) = - (f,g,(a,b,(c,d,e))) - (g,(a,b,(c,d,e)),f),
  \\
  ((a,(b,c,d),e),f,g) = - (f,g,(a,(b,c,d),e)) - (g,(a,(b,c,d),e),f),
  \\
  (((a,b,c),d,e),f,g) = (f,g,(d,e,(a,b,c))) + (f,g,(e,(a,b,c),d))
  \\
  \hfill
  + (g,(d,e,(a,b,c)),f) + (g,(e,(a,b,c),d),f).
  \end{array}
  \right.
  \end{equation}
Using the ternary Jacobi identity again, we can further reduce multilinear monomials in the remaining 6 types
by means of the following equations:
  \begin{equation}
  \label{reduceperm}
  \left\{
  \begin{array}{l}
  (a,b,(c,d,(g,e,f))) = - (a,b,(c,d,(e,f,g))) - (a,b,(c,d,(f,g,e))),
  \\
  (a,b,(c,(f,d,e),g)) = - (a,b,(c,(d,e,f),g)) - (a,b,(c,(e,f,d),g)),
  \\
  (a,(d,b,c),(g,e,f)) = - (a,(b,c,d),(g,e,f)) - (a,(c,d,b),(g,e,f)),
  \\
  \hfill
  = - (a,(d,b,c),(e,f,g)) - (a,(d,b,c),(f,g,e)),
  \\
  (a,(b,c,(f,d,e)),g) = - (a,(b,c,(d,e,f)),g) - (a,(b,c,(e,f,d)),g),
  \\
  (a,(b,(e,c,d),f),g) = - (a,(b,(c,d,e),f),g) - (a,(b,(d,e,c),f),g),
  \\
  ((c,a,b),d,(g,e,f)) = - ((a,b,c),d,(g,e,f)) - ((b,c,a),d,(g,e,f)),
  \\
  \hfill
  = - ((c,a,b),d,(e,f,g)) - ((c,a,b),d,(f,g,e)).
  \end{array}
  \right.
  \end{equation}
The basic principle is that when all three arguments have degree 1,
the first argument should not lexicographically follow both the second and third arguments.
It follows that the total number of multilinear monomials in degree 7 equals
  \[
  \Big( \tfrac23 + \tfrac23 + (\tfrac23)^2 +  \tfrac23 + \tfrac23 + (\tfrac23)^2 \Big) \cdot 7!
  =
  17920.
  \]
In order to prove that these multilinear monomials are linearly independent, we first write the ternary Jacobi identity as follows:
  \[
  I(a,b,c) = (a,b,c) + (b,c,a) + (c,a,b).
  \]
We consider the following consequences of $I(a,b,c)$ in degree 5:
  \begin{equation}
  \label{lift3}
  \left\{
  \begin{array}{lll}
  I((a,d,e),b,c), &\qquad
  I(a,(b,d,e),c), &\qquad
  I(a,b,(c,d,e)),
  \\
  (I(a,b,c),d,e), &\qquad
  (d,I(a,b,c),e), &\qquad
  (d,e,I(a,b,c)).
  \end{array}
  \right.
  \end{equation}
Every consequence of $I(a,b,c)$ in degree 5 is a linear combination of permutations
of these 6 identities.
We write $J(a,b,c,d,e)$ for one of these identities.
We consider the following 8 consequences of $J(a,b,c,d,e)$ in degree 7:
  \begin{equation}
  \label{lift5}
  \left\{
  \begin{array}{lll}
  J((a,f,g),b,c,d,e),
  &\quad
  J(a,(b,f,g),c,d,e),
  &\quad
  J(a,b,(c,f,g),d,e),
  \\
  J(a,b,c,(d,f,g),e),
  &\quad
  J(a,b,c,d,(e,f,g)),
  &\quad
  (J(a,b,c,d,e),f,g),
  \\
  (f,J(a,b,c,d,e),g),
  &\quad
  (f,g,J(a,b,c,d,e)).
  \end{array}
  \right.
  \end{equation}
Every consequence of $I(a,b,c)$ in degree 7 is a linear combination of permutations
of the resulting 48 identities.
We now reduce each of these identities in degree 7 using equations
\eqref{reducetype} and \eqref{reduceperm}, and verify that in every case the
result collapses to 0.
This proves that the multilinear monomials are linearly independent,
and hence form a basis for the multilinear subspace of degree 7 in the free
ternary algebra in the variety defined by the ternary Jacobi identity.

We now write the ternary derivation identity in the form
  \[
  J(a,b,c,d,e) = ((a,b,c),d,e) - ((a,d,e),b,c) - (a,(b,d,e),c) - (a,b,(c,d,e)),
  \]
and consider its consequences in degree 7 using \eqref{lift5}.
Every consequence in degree 7 is a linear combination of permutations
of these 8 identities;
we reduce each of them using equations
\eqref{reducetype} and \eqref{reduceperm}.
We create a matrix of size $22960 \times 17920$ with an upper block of size $17920 \times 17920$ and
a lower block of size $5040 \times 17920$.
In order to control memory allocation, we use modular arithmetic with $p = 101$.
(Since the group algebra $\mathbb{F} S_n$ is semisimple for $p > n$, the structure constants
from characteristic 0 are well-defined for any $p > n$.  It follows that we can do these
computations using modular arithmetic with any $p > n$ and then use rational reconstruction to recover the
correct results for characteristic 0.)
For each of the 8 consequences of the ternary derivation identity,
we apply all 5040 permutations of the variables, store the coefficient vectors
of the resulting identities in the lower block, and compute the row canonical form.
At the end of this calculation, the matrix has rank 13372;
the row space of this matrix consists of the coefficient vectors
of all polynomial identities in degree 7 for the ternary commutator
which are consequences of the ternary derivation identity.

We construct another matrix of size $5040 \times 17920$;
in each column we put the coefficient vector of the expansion of the corresponding
ternary monomial into the free associative algebra using the ternary commutator.
The expansions for association types 1, 2, 4, 5, 6, 8 with the identity permutation are as follows:
  \allowdisplaybreaks
  \begin{align*}
  (a,b,(c,d,(e,f,g))) &=
     abcdefg
  -  bcdefga
  -  abdefgc
  +  bdefgca
  \\
  &\quad
  -  abcdfge
  +  bcdfgea
  +  abdfgec
  -  bdfgeca,
  \\
  (a,b,(c,(d,e,f),g)) &=
     abcdefg
  -  bcdefga
  -  abdefgc
  +  bdefgca
  \\
  &\quad
  -  abcefdg
  +  bcefdga
  +  abefdgc
  -  befdgca,
  \\
  (a,(b,c,d),(e,f,g)) &=
     abcdefg
  -  bcdefga
  -  abcdfge
  +  bcdfgea
  \\
  &\quad
  -  acdbefg
  +  cdbefga
  +  acdbfge
  -  cdbfgea,
  \\
  (a,(b,c,(d,e,f)),g) &=
     abcdefg
  -  bcdefga
  -  acdefbg
  +  cdefbga
  \\
  &\quad
  -  abcefdg
  +  bcefdga
  +  acefdbg
  -  cefdbga,
  \\
  (a,(b,(c,d,e),f),g) &=
     abcdefg
  -  bcdefga
  -  acdefbg
  +  cdefbga
  \\
  &\quad
  -  abdecfg
  +  bdecfga
  +  adecfbg
  -  decfbga,
  \\
  ((a,b,c),d,(e,f,g)) &=
     abcdefg
  -  defgabc
  -  abcdfge
  +  dfgeabc
  \\
  &\quad
  -  bcadefg
  +  defgbca
  +  bcadfge
  -  dfgebca.
  \end{align*}
Still using arithmetic modulo $p = 101$, we compute the row canonical form of this
matrix and extract the canonical basis of the nullspace.
The rank is 4128 and hence the dimension of the nullspace is 13792.
Comparing this result with that of the previous paragraph, we see that there is a
quotient space of dimension $13792 - 13372 = 420$
consisting of polynomial identities in degree 7 for the ternary commutator
which are not consequences of the identities of lower degree.
We sort these identities by increasing number of nonzero entries
in the coefficient vector.
Starting with the matrix of rank 13372 from the previous paragraph,
we process each identity in this sorted list by applying all 5040 permutations to the variables,
storing the results in the lower block, and reducing the matrix.
Only two identities increase the rank: an identity with 20 terms increases the rank to 13722,
and an identity with 45 terms increases the rank to 13792.
Further calculations show that the first identity is a consequence of the second.

The second identity is given in the following Theorem.
The fact that this identity is satisfied by the cyclic commutator can be verified
directly by expanding each term into the free associative algebra.
But to prove that this identity is not a consequence of the identities of lower degree
requires a computation such as that just described.

\begin{theorem} \label{degree7theorem}
Every multilinear polynomial identity of degree 7 satisfied by the cyclic commutator is
a consequence of the ternary Jacobi identity, the ternary derivation identity, and the
following identity with 45 terms and coefficients $\pm 1$:
  \allowdisplaybreaks
  \begin{align*}
  &
    ( a b ( c d ( e f g ) ) )
  - ( a b ( c f ( d e g ) ) )
  - ( a b ( c f ( e g d ) ) )
  - ( a b ( c e ( d g f ) ) )
  - ( a b ( c g ( f e d ) ) )
  \\
  &
  - ( a b ( d c ( f e g ) ) )
  + ( a b ( d f ( c g e ) ) )
  + ( a b ( d e ( c f g ) ) )
  + ( a b ( d e ( f g c ) ) )
  - ( a b ( f c ( d g e ) ) )
  \\
  &
  - ( a b ( f c ( e d g ) ) )
  - ( a b ( f d ( e g c ) ) )
  + ( a b ( f g ( d e c ) ) )
  + ( a b ( e c ( f g d ) ) )
  + ( a b ( e d ( c g f ) ) )
  \\
  &
  + ( a b ( e d ( f c g ) ) )
  - ( a b ( e g ( c f d ) ) )
  - ( a b ( e g ( d c f ) ) )
  + ( a b ( g f ( d c e ) ) )
  - ( a b ( g e ( c d f ) ) )
  \\
  &
  + ( a c ( f b ( d e g ) ) )
  - ( a c ( g b ( d f e ) ) )
  - ( a c ( g b ( f e d ) ) )
  - ( a d ( e b ( c f g ) ) )
  - ( a d ( g b ( f c e ) ) )
  \\
  &
  - ( a g ( d b ( c f e ) ) )
  - ( a g ( d b ( f e c ) ) )
  + ( a g ( f b ( c e d ) ) )
  - ( a ( b c d ) ( e f g ) )
  + ( a ( b c f ) ( d e g ) )
  \\
  &
  - ( a ( b c g ) ( d f e ) )
  - ( a ( b d e ) ( c f g ) )
  - ( a ( b d g ) ( f c e ) )
  + ( a ( b e g ) ( d c f ) )
  - ( a ( b g d ) ( c f e ) )
  \\
  &
  - ( a ( b g d ) ( f e c ) )
  + ( a ( b g f ) ( c e d ) )
  - ( a ( c b d ) ( f e g ) )
  + ( a ( c b e ) ( d f g ) )
  + ( a ( c b e ) ( f g d ) )
  \\
  &
  - ( a ( c d b ) ( e f g ) )
  + ( a ( c g b ) ( f e d ) )
  - ( a ( d b f ) ( c e g ) )
  - ( a ( d b f ) ( e g c ) )
  + ( a ( e g b ) ( d c f ) )
  \\
  &
  \equiv 0.
  \end{align*}
\end{theorem}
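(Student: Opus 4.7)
The plan is to prove the theorem by an explicit, finite computation in the $S_7$-module of multilinear identities of degree 7 for a trilinear operation. The two nontrivial inputs are already at hand: a workable basis for the free ``Jacobi ternary algebra'' in degree 7, and the expansion rule $(a,b,c) = abc - bca$ into the free associative algebra on seven generators, where all $7! = 5040$ multilinear monomials are linearly independent. First I would fix as ``normal-form'' multilinear monomials those obtained after applying the reductions \eqref{reducetype} and \eqref{reduceperm} from the ternary Jacobi identity, giving the count $17920$. Linear independence of these normal forms must be checked by lifting the Jacobi identity $I(a,b,c)$ to degree 7 via all chains $I \rightsquigarrow J(a,\dots,e) \rightsquigarrow $ (one of the eight liftings in \eqref{lift5}), permuting the seven variables in all $5040$ ways, reducing each resulting element to normal form, and verifying that it collapses to $0$.

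Second, I would construct the linear subspace $C_7 \subseteq \mathbb{F} S_7$-module of dimension $17920$ generated by the consequences in degree 7 of the two identities already known (ternary Jacobi and ternary derivation). Concretely: apply every substitution/multiplication pattern from \eqref{lift5} to $J(a,b,c,d,e) = ((a,b,c),d,e) - ((a,d,e),b,c) - (a,(b,d,e),c) - (a,b,(c,d,e))$, permute the variables in all $5040$ ways, reduce each resulting identity modulo the Jacobi normal form, and stack the coefficient vectors into a matrix. Row-reducing (over $\mathbb{F}_{101}$, which is harmless because $p > n$ keeps $\mathbb{F} S_n$ semisimple) yields $\dim C_7 = 13372$.

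Third, I would build the ``expansion'' matrix whose columns are the $17920$ normal-form monomials expanded into $\mathbb{F} S_7$ via the cyclic commutator, as illustrated by the six sample expansions preceding the theorem. Its nullspace, call it $N_7$, is the full space of multilinear identities in degree 7 satisfied by $(a,b,c) = abc - bca$. Computing its rank gives $\dim N_7 = 13792$, so the ``new'' identities form a quotient of dimension exactly $13792 - 13372 = 420$.

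It remains to exhibit generators of $N_7 / C_7$ as an $S_7$-module. I would sort a basis of $N_7$ modulo $C_7$ by increasing number of nonzero coefficients, and greedily add elements one at a time, each time permuting in all $5040$ ways and re-reducing the matrix, recording only those that strictly increase the rank. This produces the two ``small'' generators mentioned (one with 20 terms and one with 45 terms), and one further check verifies that the $45$-term identity alone generates the $S_7$-submodule containing the $20$-term one, so that together with Jacobi and derivation it gives all of $N_7$. The main obstacle is purely computational: keeping the $22960 \times 17920$ row-reduction feasible in memory and reliably identifying the minimal-support generators; the modular arithmetic with $p = 101$ combined with rational reconstruction is what makes the whole pipeline tractable. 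The remaining verification that the specific $45$-term polynomial displayed in the statement is indeed satisfied by $(a,b,c)$ is a direct expansion into $\mathbb{F} S_7$ and cancellation, independent of the search above.
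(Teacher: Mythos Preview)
Your proposal is correct and follows essentially the same computational strategy as the paper: establish the $17920$ normal-form monomials and their independence via Jacobi liftings, compute the rank $13372$ of the lifted derivation consequences over $\mathbb{F}_{101}$, compute the $13792$-dimensional nullspace of the $5040 \times 17920$ expansion matrix, and then greedily extract generators from the $420$-dimensional quotient sorted by support size, finding that the $45$-term identity suffices. The only cosmetic difference is that the paper organizes the derivation-consequence step as a single $22960 \times 17920$ block matrix (an upper $17920 \times 17920$ block plus a lower $5040 \times 17920$ block into which permuted identities are loaded and reduced), but this is just bookkeeping for the same linear-algebra computation you describe.
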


\begin{remark}
The following identity with 20 terms and coefficients $\pm 1$ is the simplest identity in degree 7
for the cyclic commutator which increased the rank in the computation described above:
  \allowdisplaybreaks
  \begin{align*}
  &
    ( a b ( c d ( e f g ) ) )
  + ( a b ( c d ( g e f ) ) )
  - ( a b ( e d ( g f c ) ) )
  - ( a b ( g d ( e c f ) ) )
  - ( a b ( f d ( c e g ) ) )
  \\
  &
  - ( a d ( c b ( e g f ) ) )
  - ( a d ( c b ( g f e ) ) )
  + ( a d ( e b ( g c f ) ) )
  + ( a d ( g b ( e f c ) ) )
  + ( a d ( f b ( c g e ) ) )
  \\
  &
  - ( a ( b d c ) ( e g f ) )
  - ( a ( b d c ) ( g f e ) )
  + ( a ( b d e ) ( g c f ) )
  + ( a ( b d g ) ( e f c ) )
  + ( a ( b d f ) ( c g e ) )
  \\
  &
  + ( a ( d b c ) ( e f g ) )
  + ( a ( d b c ) ( g e f ) )
  - ( a ( d b e ) ( g f c ) )
  - ( a ( d b g ) ( e c f ) )
  - ( a ( d b f ) ( c e g ) )
  \\
  &
  \equiv 0.
  \end{align*}
\end{remark}

\begin{theorem}
There are no new identities for the cyclic commutator in degree 9:
every multilinear polynomial identity of degree 9 satisfied by the cyclic commutator is
a consequence of the identities in degrees 3, 5 and 7.
\end{theorem}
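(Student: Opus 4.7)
The plan is to repeat, in degree 9, the computational pipeline that established Theorem \ref{degree7theorem}, taking as input the three known identities: the ternary Jacobi identity (degree 3), the ternary derivation identity (degree 5), and the 45-term identity of Theorem \ref{degree7theorem} (degree 7). First I would enumerate the $t(4) = 55$ ternary association types in degree 9. Using the ternary Jacobi identity I would reduce this list to a canonical subset of types (the analogue of the 6 retained types in degree 7), and then within each retained type I would apply the analogues of equations \eqref{reducetype} and \eqref{reduceperm} to collapse the leaf permutations down to the subset in which, at every inner triple whose three arguments are single variables, the first argument does not lexicographically follow both of the other two. This produces an explicit spanning set of multilinear monomials for degree 9 modulo the Jacobi identity; I would verify linear independence by lifting $I(a,b,c)$ to degree 9 in all ways, reducing, and checking the result collapses to $0$, exactly as done in the paper for degree 7.

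Next I would generate all consequences in degree 9 of the three known identities. An identity of degree $d$ admits $d+3$ elementary liftings to degree $d+2$: one Jacobi-type substitution $a_j \mapsto (x,y,z)$ into each of the $d$ argument slots, plus the three outer embeddings $I \mapsto (I,f,g), (f,I,g), (f,g,I)$. Iterating gives all liftings of the Jacobi identity to degree 9 (three applications), of the ternary derivation identity to degree 9 (two applications), and of the 45-term identity to degree 9 (one application, yielding 10 lifts). For each resulting identity I would apply all $9! = 362880$ permutations of the variables, reduce to the canonical basis using \eqref{reducetype} and \eqref{reduceperm}, and store the coefficient vectors as rows of a large sparse matrix $C$ whose columns are indexed by the basis monomials. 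Row-reducing $C$ yields the space $V_{\mathrm{con}}$ of all consequences in degree 9 of the lower-degree identities.

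In parallel I would build the expansion matrix $E$ whose columns are the images of the canonical basis monomials under $(a,b,c) \mapsto abc - bca$, expanded as elements of the multilinear component of degree $9$ in the free associative algebra (a space of dimension $9! = 362880$). The space $V_{\mathrm{id}}$ of all multilinear identities of degree 9 for the cyclic commutator is the left nullspace of $E$. The theorem is equivalent to the equality $\dim V_{\mathrm{con}} = \dim V_{\mathrm{id}}$, which I would verify by comparing ranks: one computes $\dim V_{\mathrm{con}}$ from the rank of $C$, and $\dim V_{\mathrm{id}}$ from $(\text{number of basis monomials}) - \mathrm{rank}(E)$. Equivalently, and more efficiently, one processes the basis of the nullspace of $E$ in order of increasing support and checks that every such identity already lies in the row space of $C$; this is exactly the "rank increment" test used in the degree 7 argument, and the claim is that no basis vector of $\ker E$ produces any rank increment.

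The principal obstacle is sheer scale. The column count is on the order of hundreds of thousands and the row count, after all liftings and permutations, runs into the tens of millions, which makes naive dense linear algebra infeasible. I would handle this the same way as in the degree 7 computation: work throughout modulo a prime $p$ with $p > 9$ (for example $p = 101$, which keeps $\mathbb{F}_p S_9$ semisimple and therefore preserves the dimensions of the spaces of identities), maintain only a running row-reduced form rather than the full raw matrix, and add consequences one at a time in order of increasing complexity, discarding any row that does not increase the rank. The final output is a certificate that every generator of $\ker E$ is a permuted consequence of the identities of degrees 3, 5 and 7, which is the claim of the theorem.
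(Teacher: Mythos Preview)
Your pipeline is correct in principle and would, if carried to completion, establish the theorem: you have the right liftings (the $d+3$ elementary liftings from degree $d$ to $d+2$, iterated to reach degree 9, yielding $6\cdot 8\cdot 10 + 8\cdot 10 + 10 = 570$ generators of the consequence module), the right expansion map, and the right rank comparison. One small slip: the space of identities is the (right) nullspace of your expansion matrix $E$, not the left nullspace, given that your columns are the images of monomials.

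However, your approach is not the one the paper uses, and the difference is substantive. You propose to scale up the degree-7 computation directly: reduce types via Jacobi, build one enormous matrix over $\mathbb{F}_p$, and row-reduce incrementally. The paper instead invokes the representation theory of $S_9$: for each of the 30 partitions $\lambda$ of 9, with $d_\lambda = \dim V_\lambda$, it uses Clifton's algorithm to compute the $d_\lambda \times d_\lambda$ representation matrices of each association type, assembles a $570\,d_\lambda \times 55\,d_\lambda$ block matrix of lifted identities and a $55\,d_\lambda \times 56\,d_\lambda$ expansion matrix, and compares ranks representation by representation. Notably, the paper keeps all 55 association types and does \emph{not} bother reducing via Jacobi in degree 9, because the isotypic decomposition already brings the matrices down to manageable size (the largest $d_\lambda$ is 216, so the worst matrix is roughly $123{,}000 \times 12{,}000$). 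Your direct approach, by contrast, faces column counts in the millions and on the order of $570 \times 9! \approx 2 \times 10^8$ raw rows before reduction; your ``tens of millions'' estimate is optimistic. This is not obviously infeasible with sparse incremental reduction over $\mathbb{F}_p$, but it is an order of magnitude harder than what the paper actually does, and the paper explicitly flags the size of the problem as the reason for switching to representation theory. If you want a computation that will certainly terminate in reasonable time, you should adopt the isotypic decomposition.
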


\begin{proof}
Owing to the large size of this problem, we use the representation theory of the symmetric group
to decompose the computation into smaller pieces corresponding to the irreducible representations.
A summary of the theory and algorithms underlying this method has been given
by Bremner and Peresi \cite{BremnerPeresi}.  We briefly explain this computation in
the present case; see Table \ref{degree9table}.  A partition will be denoted
  \[
  \lambda = (n_1,\dots,n_k),
  \qquad
  n \ge n_1 \ge \cdots \ge n_k \ge 1,
  \qquad
  n_1 + \cdots + n_k = 9.
  \]
These partitions label the irreducible representations of $S_9$; the dimension of the representation
corresponding to $\lambda$ will be denoted $d_\lambda$.  Given a partition $\lambda$ and a
permutation $\pi \in S_9$, the algorithm of Clifton \cite{Clifton} shows how to efficiently
compute a matrix $A_\lambda(\pi)$.  Furthermore, the formula
  \[
  R_\lambda(\pi) = A_\lambda(1)^{-1} A_\lambda(\pi),
  \]
where 1 is the identity permutation, gives the matrix representing $\pi$ in the representation
corresponding to $\lambda$.

We have already seen in \eqref{lift3} and \eqref{lift5} how to generate, for $n = 3$ and $n = 5$,
the consequences in degree $n+2$ of a ternary identity in degree $n$.
A similar process generates the consequences in degree 9 of a ternary identity in degree 7:
from $K(a,b,\dots,g)$ we perform
($i$) seven substitutions, replacing $x$ by $(x,h,i)$ for $x = a, b, \dots g$, and
($ii$) three multiplications, namely $(K,h,i)$, $(h,K,i)$ and $(h,i,K)$.
In this way we generate all consequences in degree 9 of the ternary Jacobi identity, the (right)
ternary derivation identity, and the 45-term identity of Theorem \ref{degree7theorem};
the total number of these identities is $6 \cdot 8 \cdot 10 + 8 \cdot 10 + 10 = 570$.
Every identity in degree 9, which is satisfied by the cyclic commutator and is a consequence
of the identities of lower degree, is a linear combination of permutations of these 570 identities.
For each representation $\lambda$ of dimension $d = d_\lambda$, we construct a matrix of size
$570 d \times 55 d$ consisting of $d \times d$ blocks.  In the $(i,j)$ block we put the
representation matrix, computed by Clifton's method, of the terms of identity $i$ with association
type $j$.  (Note that we are using all 55 ternary association types in degree 9.)
The rank of this matrix of ``lifted identities'' is denoted ``lifrank'' in Table
\ref{degree9table}.

For each representation $\lambda$ of dimension $d = d_\lambda$, we construct a second matrix of size
$55 d \times 56 d$ consisting of $d \times d$ blocks.  In the $(i,1)$ block we put the
representation matrix of the terms of the expansion in the free associative algebra of the ternary
monomial with association type $i$ and the identity permutation of the variables; in the $(i,i+1)$
block we put the identity matrix; the other blocks are zero.  The rank of this ``expansion matrix'' is always
$55 d$; this number is denoted ``exprank'' in Table \ref{degree9table}.  We compute the row canonical
form of this matrix and identify the rows whose leading 1s occur within the first column of
blocks; the number of these rows is denoted ``toprank''.  The number of remaining rows, whose leading
1s occur to the right of the first column of blocks, is denoted ``allrank''; these rows represent
all the identities satisfied by the cyclic commutator in this representation.

For every representation, we find that ``lifrank = allrank''; every identity in degree 9 satisfied
by the cyclic commutator is a consequence of identities of lower degree.  This completes the proof.
\end{proof}

\begin{definition} \label{definitionNCLTS}
A \textbf{noncommutative Lie triple system} is a vector space $T$ with a trilinear operation
$(-,-,-)\colon T \times T \times T \to T$ satisfying the ternary Jacobi identity, the
(right) ternary derivation identity, and the 45-term identity of Theorem \ref{degree7theorem}.
\end{definition}

An open problem is the existence of special identities for noncommutative Lie
triple systems: polynomial identities satisfied by the cyclic commutator in every associative
algebra, but which do not follow from the identities of Defintion \ref{definitionNCLTS}.

\begin{table}
\[
\begin{array}{rlrrrrr}
  \# & \text{partition} & \text{dimension}
  & \text{lifrank} & \text{exprank} & \text{toprank} & \text{allrank} \\
  \midrule
   1 & 9 &   1 &    55 &    55 &   0 &    55 \\
   2 & 81 &   8 &   435 &   440 &   5 &   435 \\
   3 & 72 &  27 &  1464 &  1485 &  21 &  1464 \\
   4 & 711 &  28 &  1519 &  1540 &  21 &  1519 \\
   5 & 63 &  48 &  2600 &  2640 &  40 &  2600 \\
   6 & 621 & 105 &  5686 &  5775 &  89 &  5686 \\
   7 & 6111 &  56 &  3034 &  3080 &  46 &  3034 \\
   8 & 54 &  42 &  2272 &  2310 &  38 &  2272 \\
   9 & 531 & 162 &  8768 &  8910 & 142 &  8768 \\
  10 & 522 & 120 &  6496 &  6600 & 104 &  6496 \\
  11 & 5211 & 189 & 10231 & 10395 & 164 & 10231 \\
  12 & 51111 &  70 &  3790 &  3850 &  60 &  3790 \\
  13 & 441 &  84 &  4546 &  4620 &  74 &  4546 \\
  14 & 432 & 168 &  9091 &  9240 & 149 &  9091 \\
  15 & 4311 & 216 & 11689 & 11880 & 191 & 11689 \\
  16 & 4221 & 216 & 11689 & 11880 & 191 & 11689 \\
  17 & 42111 & 189 & 10231 & 10395 & 164 & 10231 \\
  18 & 411111 &  56 &  3034 &  3080 &  46 &  3034 \\
  19 & 333 &  42 &  2274 &  2310 &  36 &  2274 \\
  20 & 3321 & 168 &  9091 &  9240 & 149 &  9091 \\
  21 & 33111 & 120 &  6496 &  6600 & 104 &  6496 \\
  22 & 3222 &  84 &  4546 &  4620 &  74 &  4546 \\
  23 & 32211 & 162 &  8768 &  8910 & 142 &  8768 \\
  24 & 321111 & 105 &  5686 &  5775 &  89 &  5686 \\
  25 & 3111111 &  28 &  1519 &  1540 &  21 &  1519 \\
  26 & 22221 &  42 &  2272 &  2310 &  38 &  2272 \\
  27 & 222111 &  48 &  2600 &  2640 &  40 &  2600 \\
  28 & 2211111 &  27 &  1464 &  1485 &  21 &  1464 \\
  29 & 21111111 &   8 &   435 &   440 &   5 &   435 \\
  30 & 111111111 &   1 &    55 &    55 &   0 &    55 \\
\bottomrule
\end{array}
\]
\caption{Ranks of identities in degree 9 for cyclic commutator}
\label{degree9table}
\end{table}

\subsection{Universal associative envelopes}

We can obtain more information about a nonassociative structure by studying its irreducible
finite dimensional representations.  For a structure defined by a multilinear operation, the
first step toward classifying the representations is to construct the universal associative
enveloping algebra.
This generalizes the familiar construction of the universal enveloping algebras of Lie and
Jordan algebras, where an important dichotomy arises: a finite dimensional simple Lie algebra has
an infinite dimensional universal envelope and infinitely many isomorphism classes of
irreducible finite dimensional representations, but a finite dimensional simple Jordan algebra has
a finite dimensional envelope and only finitely many irreducible representations.

The general definition of the universal associative envelope is as follows.
Suppose that $B$ is a subspace, of an associative algebra $A$ over
the field $\mathbb{F}$, closed under the $n$-ary multilinear operation
  \[
  (a_1,\dots,a_n) = \sum_{\sigma \in S_n} x_\sigma a_{\sigma(1)} \cdots a_{\sigma(n)}
  \quad
  (x_\sigma \in \mathbb{F}).
  \]
Write $d = \dim B$ and let $b_1, \dots, b_d$ be a basis of $B$ over
$\mathbb{F}$; we then have the structure constants for the resulting $n$-ary
algebra structure on $B$:
  \[
  (b_{i_1},\dots,b_{i_n}) = \sum_{j=1}^d c_{i_1 \cdots i_n}^j b_j
  \quad
  (1 \le i_1, \dots, i_n \le d).
  \]
Let $F\langle B \rangle$ be the free associative algebra generated by the symbols $b_1, \dots, b_d$
(this ambiguity should not cause confusion). Consider the ideal $I \subseteq F\langle B \rangle$ generated by
the $d^n$ elements
  \[
  \sum_{\sigma \in S_n} x_\sigma b_{i_{\sigma(1)}} \cdots b_{i_{\sigma(n)}}
  -
  \sum_{j=1}^d c_{i_1 \cdots i_n}^j b_j
  \quad
  (1 \le i_1, \dots, i_n \le d).
  \]
The quotient algebra $U(B) = F\langle B \rangle / I$ is the universal
associative enveloping algebra of the $n$-ary structure on $B$; by assumption,
the natural map $B \to U(B)$ will be injective, since the $n$-ary structure on
$B$ is defined in terms of the associative structure on $A$. This generalizes
the construction of the enveloping algebras of Lie algebras, where $I$ is
generated by the elements $b_i b_j - b_j b_i - [b_i,b_j]$, and of
Jordan algebras, where $I$ is generated by $b_i b_j + b_j b_i - b_i \circ b_j$.
If $B$ is a finite-dimensional Lie (resp.~Jordan) algebra, then $U(B)$
is infinite-dimensional (resp.~finite-dimensional).

More generally, the same construction applies to any $n$-ary algebra which
satisfies the same low-degree polynomial identities as the $n$-ary operation
$(a_1,\dots,a_n)$.  This gives rise to a universal associative enveloping
algebra; however, the natural map $B \to U(B)$ is no longer necessarily
injective: for example, the universal enveloping algebra of an exceptional
Jordan algebra.  Once a set of generators for the ideal $I$ is known, one can
compute a noncommutative Gr\"obner basis for this ideal, and then use this to
obtain a basis and structure constants for $U(B)$.

\subsection{An example}

We make the vector space of $n \times n$ matrices of trace 0 into a ternary
algebra $C_n$ with the cyclic commutator $\omega(x,y,z) = xyz - yzx$ as the trilinear operation.
In the simplest case, $n = 2$, we have this basis for $C_2$:
  \[
  a = \left[ \begin{array}{rr} 1 & 0 \\ 0 & -1 \end{array} \right],
  \qquad
  b = \left[ \begin{array}{rr} 0 & 1 \\ 0 &  0 \end{array} \right],
  \qquad
  c = \left[ \begin{array}{rr} 0 & 0 \\ 1 &  0 \end{array} \right].
  \]
The universal associative envelope $U(C_2)$ is the quotient of the free associative algebra
with three generators (also denoted $a, b, c$) modulo the ideal generated by the elements
$xyz - yzx - \omega(x,y,z)$ for $x, y, z \in \{ a, b, c \}$.  This gives the following set
of 24 ideal generators, in reverse degree lexicographical order:
  \begin{equation}
  \label{generators}
  \left\{
  \begin{array}{llll}
  c^2b - bc^2, &\quad
  c^2b - cbc + c, &\quad
  c^2a - ac^2, &\quad
  c^2a - cac, \\
  cbc - bc^2 - c, &\quad
  cb^2 - bcb + b, &\quad
  cb^2 - b^2c, &\quad
  cba - acb, \\
  cba - bac - a, &\quad
  cac - ac^2, &\quad
  cab - bca + a, &\quad
  cab - abc + a, \\
  ca^2 - aca -2 c, &\quad
  ca^2 - a^2c, &\quad
  bcb - b^2c - b, &\quad
  bca - abc, \\
  b^2a - ab^2, &\quad
  b^2a - bab, &\quad
  bac - acb + a, &\quad
  bab - ab^2, \\
  ba^2 - aba -2 b, &\quad
  ba^2 - a^2b, &\quad
  aca - a^2c +2 c, &\quad
  aba - a^2b +2 b.
  \end{array}
  \right.
  \end{equation}
We compute a Gr\"obner basis for this ideal following the ideas of Bergman \cite{Bergman}
and the exposition by de Graaf \cite{deGraaf}.
We self-reduce the set of generators \eqref{generators} by performing noncommutative division with remainder
in order to eliminate terms which contain leading monomials of other terms.  This leaves a
set of 16 ideal generators:
  \begin{equation}
  \label{reduced}
  \left\{
  \begin{array}{llll}
  c^2b - bc^2, &\quad
  c^2a - ac^2, &\quad
  cbc - bc^2 - c, &\quad
  cb^2 - b^2c, \\
  cba - acb, &\quad
  cac - ac^2, &\quad
  cab - abc + a, &\quad
  ca^2 - a^2c, \\
  bcb - b^2c - b, &\quad
  bca - abc, &\quad
  b^2a - ab^2, &\quad
  bac - acb + a, \\
  bab - ab^2, &\quad
  ba^2 - a^2b, &\quad
  aca - a^2c +2 c, &\quad
  aba - a^2b +2 b.
  \end{array}
  \right.
  \end{equation}
We find all compositions of these generators, obtaining 93 elements, and then
compute the normal forms of the compositions by reducing them modulo the ideal generators;
we obtain 18 elements which must be included as new ideal generators:
  \begin{equation}
  \label{compositions}
  \left\{
  \begin{array}{llll}
  \multicolumn{2}{l}{a^3cb - a^3bc +2 abc + a^3 -2 a,} &
  \multicolumn{2}{l}{a^3cb - a^3bc -2 acb - a^3 +2 a,}
  \\
  \multicolumn{2}{l}{a^2cb - a^2bc -2 cb + a^2,} &
  \multicolumn{2}{l}{a^2cb - a^2bc +2 bc - a^2,}
  \\
  c^3, &\quad
  bc^2, &
  b^2c, &\quad
  b^2c - a^2b + b,
  \\
  b^3, &\quad
  ac^2, &
  acb + abc - a, &\quad
  ab^2,
  \\
  a^2c - c, &\quad
  a^2b - b, &
  c^2, &\quad
  ca + ac,
  \\
  b^2, &\quad
  ba + ab.
  \end{array}
  \right.
  \end{equation}
We combine the generators \eqref{reduced} with the compositions \eqref{compositions},
and self-reduce the resulting set, obtaining a new set of 8 ideal generators:
  \begin{equation}
  \label{newgenerators}
  a^2c - c, \;\;
  a^2b - b, \;\;
  a^3 - a, \;\;
  c^2, \;\;
  cb + bc - a^2, \;\;
  ca + ac, \;\;
  b^2, \;\;
  ba + ab.
  \end{equation}
We repeat the same process once more: finding all compositions of the generators,
and computing the normal forms of the compositions modulo the generators.
Every composition reduces to 0, and hence \eqref{newgenerators} is a Gr\"obner basis
for the ideal.
From this we easily obtain a vector space basis for
the universal envelope $U(C_2)$: the cosets of the monomials which do not contain
the leading monomial of any element of the Gr\"obner basis.  Hence $U(C_2)$
is finite dimensional and has this basis:
  \begin{equation}
  \label{ubasis}
  1, \qquad a, \qquad b, \qquad c, \qquad a^2, \qquad ab, \qquad ac, \qquad bc, \qquad abc.
  \end{equation}
The multiplication for this monomial basis is given in Table \ref{UC2table}, where we
write monomials but mean cosets.  If the product of two basis monomials is not a basis
monomial, then we must compute its normal form modulo the Gr\"obner basis.

  \begin{table}
  \[
  \begin{array}{c|ccccccccc}
  \cdot &
  1 &
  a &
  b &
  c &
  a^2 &
  ab &
  ac &
  bc &
  abc \\ \midrule
  1 &
  1 &
  a &
  b &
  c &
  a^2 &
  ab &
  ac &
  bc &
  abc \\
  a &
  a &
  a^2 &
  ab &
  ac &
  a &
  b &
  c &
  abc &
  bc \\
  b &
  b &
  - ab &
  0 &
  bc &
  b &
  0 &
  - abc &
  0 &
  0 \\
  c &
  c &
  - ac &
  a^2 - bc &
  0 &
  c &
  - a  + abc &
  0 &
  c &
  - ac \\
  a^2 &
  a^2 &
  a &
  b &
  c &
  a^2 &
  ab &
  ac &
  bc &
  abc \\
  ab &
  ab &
  - b &
  0 &
  abc &
  ab &
  0 &
  - bc &
  0 &
  0 \\
  ac &
  ac &
  - c &
  a - abc &
  0 &
  ac &
  - a^2  + bc &
  0 &
  ac &
  - c \\
  bc &
  bc &
  abc &
  b &
  0 &
  bc &
  ab &
  0 &
  bc &
  abc \\
  abc &
  abc &
  bc &
  ab &
  0 &
  abc &
  b &
  0 &
  abc &
  bc
  \end{array}
  \]
  \smallskip
  \caption{Multiplication table for monomial basis of $U(C_2)$}
  \label{UC2table}
  \end{table}

We now compute the Wedderburn decomposition of $U(C_2)$ using the algorithms
described in the author's survey paper \cite{BremnerW}.
The radical of $U(C_2)$ consists of the elements whose coefficient vectors with respect to
the ordered basis \eqref{ubasis} belong to the nullspace of the Dickson matrix (Table \ref{dickson}),
but this matrix has full rank.
It follows that $U(C_2)$ is semisimple, and hence a direct sum of full matrix algebras.

  \begin{table}
  \[
  \left[
  \begin{array}{rrrrrrrrr}
  9 & 0 & 0 & 0 & 8 &  0 &  0 & 4 & 0 \\
  0 & 8 & 0 & 0 & 0 &  0 &  0 & 0 & 4 \\
  0 & 0 & 0 & 4 & 0 &  0 &  0 & 0 & 0 \\
  0 & 0 & 4 & 0 & 0 &  0 &  0 & 0 & 0 \\
  8 & 0 & 0 & 0 & 8 &  0 &  0 & 4 & 0 \\
  0 & 0 & 0 & 0 & 0 &  0 & -4 & 0 & 0 \\
  0 & 0 & 0 & 0 & 0 & -4 &  0 & 0 & 0 \\
  4 & 0 & 0 & 0 & 4 &  0 &  0 & 4 & 0 \\
  0 & 4 & 0 & 0 & 0 &  0 &  0 & 0 & 4
  \end{array}
  \right]
  \]
  \smallskip
  \caption{Dickson matrix for $U(C_2)$}
  \label{dickson}
  \end{table}

  \begin{table}
  \begin{align*}
  M
  =
  \frac12
  &\left[
  \begin{array}{rrrrrrrrr}
   2 &  0 & 0 &  0 &  0 &  0 &  0 &  0 & 0 \\
   0 &  1 & 0 &  0 &  0 & -1 &  0 &  0 & 0 \\
   0 &  0 & 0 &  1 &  0 &  0 &  0 & -1 & 0 \\
   0 &  0 & 1 &  0 &  0 &  0 & -1 &  0 & 0 \\
  -2 &  1 & 0 &  0 &  0 &  1 &  0 &  0 & 0 \\
   0 &  0 & 0 & -1 &  0 &  0 &  0 & -1 & 0 \\
   0 &  0 & 1 &  0 &  0 &  0 &  1 &  0 & 0 \\
   0 & -1 & 0 &  0 &  1 & -1 &  0 &  0 & 1 \\
   0 & -1 & 0 &  0 & -1 &  1 &  0 &  0 & 1
  \end{array}
  \right]
  \end{align*}
  \caption{The change of basis matrix for $U(C_2)$}
  \label{Mtable}
  \end{table}

A basis for the center of $U(C_2)$ is easily found, and consists of these three elements:
  \[
  1, \qquad
  a-2abc, \qquad
  a^2.
  \]
From this we obtain a basis of orthogonal primitive idempotents for the center:
  \[
  1-a^2, \qquad
  \tfrac12 a + \tfrac12 a^2 - abc, \qquad
  -\tfrac12 a + \tfrac12 a^2 + abc.
  \]
The first idempotent generates a 1-dimensional ideal, and the second and third
each generate a 4-dimensional ideal.
From this it follows that
  \begin{equation}
  \label{wedderburn}
  U(C_2) \approx \mathbb{F} \oplus M_2(\mathbb{F}) \oplus M_2(\mathbb{F}).
  \end{equation}
Hence $C_2$ has exactly three distinct irreducible finite dimensional representations:
the 1-dimensional trivial representation, the 2-dimensional natural representation, and
another 2-dimensional representation which is in fact the negative of the natural representation.
Moreover, $U(C_2)$ satisfies the standard identity for $2 \times 2$ matrices.
From this point of view $C_2$ is more like a Jordan structure than a Lie structure.

Further calculations give the matrix units in the 4-dimensional ideals, and so we obtain
another basis for $U(C_2)$:
  \[
  E_{11}^{(1)}, \quad
  E_{11}^{(2)}, \quad E_{12}^{(2)}, \quad E_{21}^{(2)}, \quad E_{22}^{(2)}, \quad
  E_{11}^{(3)}, \quad E_{12}^{(3)}, \quad E_{21}^{(3)}, \quad E_{22}^{(3)}.
  \]
The columns of the matrix $M$ in Table \ref{Mtable} give the coefficients of these basis elements in terms
of the original basis elements \eqref{ubasis}.  The inverse matrix $M^{-1}$ gives the coefficients of the original basis
in terms of the matrix units, and from the columns of the inverse we extract the representation matrices.

\begin{remark}
The theory of noncommutative Gr\"obner bases has been extended recently to associative dialgebras
by Bokut et al.~\cite{BCL}.  An interesting open problem is to use these results to construct the
universal associative enveloping dialgebras of certain finite dimensional nonassociative dialgebras.
In the case that the enveloping dialgebra is finite dimensional, then it would be useful to have a
generalization to dialgebras of the classical Wedderburn structure theory for associative algebras.
A first step in this direction has been taken recently by M\'artin-Gonz\'alez \cite{Martin}.
\end{remark}

\subsection{Dialgebra analogues of the cyclic commutator}

Applying the KP algorithm to the ternary Jacobi identity gives three identities,
each of which is equivalent to the following identity relating the three new operations:
  \[
  (a,b,c)_3 + (b,c,a)_2 + (c,a,b)_1 \equiv 0.
  \]
Hence the third new operation can be eliminated using the equation
  \[
  (a,b,c)_3 \equiv {} - (c,a,b)_1 - (b,c,a)_2.
  \]
Applying the KP algorithm to the ternary Jacobi identity gives five identities:
  \allowdisplaybreaks
  \begin{align*}
  (a,b,(c,d,e)_1)_1 &\equiv ((a,b,c)_1,d,e)_1 + (c,(a,b,d)_1,e)_2 + (c,d,(a,b,e)_1)_3, \\
  (a,b,(c,d,e)_1)_2 &\equiv ((a,b,c)_2,d,e)_1 + (c,(a,b,d)_2,e)_2 + (c,d,(a,b,e)_2)_3, \\
  (a,b,(c,d,e)_1)_3 &\equiv ((a,b,c)_3,d,e)_1 + (c,(a,b,d)_1,e)_1 + (c,d,(a,b,e)_1)_1, \\
  (a,b,(c,d,e)_2)_3 &\equiv ((a,b,c)_3,d,e)_2 + (c,(a,b,d)_3,e)_2 + (c,d,(a,b,e)_1)_2, \\
  (a,b,(c,d,e)_3)_3 &\equiv ((a,b,c)_3,d,e)_3 + (c,(a,b,d)_3,e)_3 + (c,d,(a,b,e)_3)_3.
  \end{align*}
Eliminating the third operation gives five identities relating the first two operations:
  \allowdisplaybreaks
  \begin{align*}
  &
  (a,b,(c,d,e)_1)_1 - ((a,b,c)_1,d,e)_1 - (c,(a,b,d)_1,e)_2 + ((a,b,e)_1,c,d)_1
  \\
  &\quad
  + (d,(a,b,e)_1,c)_2
  \equiv 0,
  \\
  &
  (a,b,(c,d,e)_1)_2 - ((a,b,c)_2,d,e)_1 - (c,(a,b,d)_2,e)_2 + ((a,b,e)_2,c,d)_1
  \\
  &\quad
  + (d,(a,b,e)_2,c)_2
  \equiv 0,
  \\
  &
  ((c,d,e)_1,a,b)_1 + (b,(c,d,e)_1,a)_2 - ((c,a,b)_1,d,e)_1 - ((b,c,a)_2,d,e)_1
  \\
  &\quad
  + (c,(a,b,d)_1,e)_1 + (c,d,(a,b,e)_1)_1
  \equiv 0,
  \\
  &
  ((c,d,e)_2,a,b)_1 + (b,(c,d,e)_2,a)_2 - ((c,a,b)_1,d,e)_2 - ((b,c,a)_2,d,e)_2
  \\
  &\quad
  - (c,(d,a,b)_1,e)_2 - (c,(b,d,a)_2,e)_2 + (c,d,(a,b,e)_1)_2
  \equiv 0,
  \\
  &
  ((e,c,d)_1,a,b)_1 + ((d,e,c)_2,a,b)_1 + (b,(e,c,d)_1,a)_2 + (b,(d,e,c)_2,a)_2
  \\
  &\quad
  - (e,(c,a,b)_1,d)_1 - (e,(b,c,a)_2,d)_1 - (d,e,(c,a,b)_1)_2 - (d,e,(b,c,a)_2)_2
  \\
  &\quad
  - (e,c,(d,a,b)_1)_1 - (e,c,(b,d,a)_2)_1 - ((d,a,b)_1,e,c)_2 - ((b,d,a)_2,e,c)_2
  \\
  &\quad
  - ((e,a,b)_1,c,d)_1 - ((b,e,a)_2,c,d)_1 - (d,(e,a,b)_1,c)_2 - (d,(b,e,a)_2,c)_2
  \equiv 0.
  \end{align*}
Applying the KP algorithm to the 45-term identity of Theorem \ref{degree7theorem}
will produce seven identities from which we can eliminate the third operation.
All these identities together will define the dialgebra analogue of noncommutative
Lie triple systems.

If we apply the BSO algorithm to the cyclic commutator, $\omega(a,b,c) = abc - bca$,
then we obtain these three dialgebra operations:
  \[
  \widehat{\omega}_1(a,b,c) = \widehat{a}bc - bc\widehat{a},
  \qquad
  \widehat{\omega}_2(a,b,c) = a\widehat{b}c - \widehat{b}ca,
  \qquad
  \widehat{\omega}_3(a,b,c) = ab\widehat{c} - b\widehat{c}a.
  \]
We have the linear dependence relation,
  \[
  \widehat{\omega}_1(a,b,c) + \widehat{\omega}_2(c,a,b) + \widehat{\omega}_3(b,c,a) = 0,
  \]
so we only retain
$\widehat{\omega}_1(a,b,c)$ and $\widehat{\omega}_2(a,b,c)$.  It is an open problem to
determine the polynomial identities of degrees 3, 5 and 7 satisfied by these operations
in every associative dialgebra, and to check whether these identities are equivalent to
those produced by the KP algorithm.


\section{Conjecture relating the KP and BSO algorithms}

In this section we state a conjecture first formulated by Bremner, Felipe, and S\'anchez-Ortega \cite{BFSO}.
Let $\mathbb{F}$ be a field, and let $\omega$ be a multilinear $n$-ary operation over $\mathbb{F}$.
Fix a degree $d$ and consider the multilinear polynomial identities of degree $e \le d$ satisfied by $\omega$.
Precisely, let $A_e$ be the multilinear subspace of degree $e$ in the free nonassociative $n$-ary algebra
on $e$ generators.
Let $I_e \subseteq A_e$ be the subspace of polynomials which vanish identically when the $n$-ary operation
is replaced by $\omega$.
The multilinear identities of degree $\le d$ satisfied by $\omega$ are then
  \[
  I_d(\omega) = \bigoplus_{1 \le e \le d} I_e.
  \]
Applying the KP algorithm to the identities in $I_d(\omega)$ produces
multilinear identities for $n$ new $n$-ary operations.
Precisely, let $B_e$ be the multilinear subspace of degree $e$ in
the free nonassociative algebra with $n$ operations of arity $n$.
Let KP$(I_e) \subseteq B_e$ be the subspace obtained by applying the KP algorithm to $I_e$,
and define
  \[
  \mathrm{KP}_d(\omega) = \bigoplus_{1 \le e \le d} \mathrm{KP}(I_e).
  \]
We now consider a different path to the same goal.
Applying the BSO algorithm to $\omega$ produces $n$ multilinear $n$-ary operations
$\widehat{\omega}_1, \dots, \widehat{\omega}_n$.
Consider the multilinear polynomial identities of degree $e \le d$ satisfied by
$\widehat{\omega}_1, \dots, \widehat{\omega}_n$.
Precisely, let $J_e \subseteq B_e$ be the subspace of polynomials which vanish identically
when the $n$ operations are replaced by $\widehat{\omega}_1, \dots, \widehat{\omega}_n$ and define
  \[
  J_d(\widehat{\omega}_1,\dots,\widehat{\omega}_n) = \bigoplus_{1 \le e \le d} J_e.
  \]

\begin{conjecture} \label{mainconjecture}
If $\mathbb{F}$ has characteristic 0 or $p > d$ then
  \[
  \mathrm{KP}_d(\omega) = J_d(\widehat{\omega}_1,\dots,\widehat{\omega}_n).
  \]
\end{conjecture}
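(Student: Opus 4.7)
The plan is to prove both inclusions by analyzing the BSO evaluation map $\epsilon\colon B_e \to F_e$, where $F_e$ denotes the multilinear subspace of degree $e$ in the free associative dialgebra on $e$ generators. The conjecture in degree $e$ is equivalent to the statement $\ker(\epsilon) = \mathrm{KP}(I_e)$. The key observation is that every normal-form monomial in $F_e$ has a unique center (Lemma \ref{lemmanormalform}), and $\widehat{\omega}_j(c_1,\dots,c_n)$, viewed as an element of the dialgebra, inherits its center from its $j$-th argument $c_j$. Consequently, in any composite monomial $M$ in the new operations $\widehat{\omega}_1,\dots,\widehat{\omega}_n$, the subscripts on the operations trace a unique \emph{central path} from the root of $M$ down to a single leaf, and $\epsilon(M)$ is supported on normal-form dialgebra monomials all sharing that leaf as their center.

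The crux is a structural lemma: \emph{modulo the subspace $P_2 \subseteq B_e$ generated by the KP Part 2 identities and their multilinear substitution consequences in degree $e$, the quotient $B_e/P_2$ decomposes as a direct sum $\bigoplus_{i=1}^{e} A_e^{(i)}$, where each $A_e^{(i)}$ is a copy of $A_e$ labelled by the distinguished central leaf $a_i$.} The intuition is that Part 2 captures precisely the interchangeability of subscripts at positions off the central path, reflecting the fact that the centers of non-central subexpressions are absorbed into the outer dialgebra normal form. A confluent rewriting procedure --- descending the tree level by level and applying Part 2 at each off-central branching to normalize its subscript --- puts every element of $B_e$ into a canonical form in which only the subscripts along its central path carry information. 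Independence of the summands follows since normal-form dialgebra monomials with different centers are linearly independent in $F_e$.

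Granted the structural lemma, $\epsilon$ factors through $B_e/P_2$, and on each summand $A_e^{(i)}$ it factors further as the composition of the expansion map $\pi\colon A_e \to \mathbb{F} S_e$ (sending an $\omega$-monomial to its expansion in the free associative algebra) and the \emph{hatting} embedding $\eta_i\colon \mathbb{F} S_e \hookrightarrow F_e$ sending the associative monomial $a_{\sigma(1)} \cdots a_{\sigma(e)}$ to the normal-form dialgebra monomial with the hat on $a_i$. For fixed $i$, the map $\eta_i$ is a linear bijection onto the subspace of $F_e$ spanned by normal-form monomials with center $a_i$, and in particular is injective. Hence $\ker(\epsilon|_{A_e^{(i)}}) = I_e^{(i)}$, the copy of $I_e$ sitting in $A_e^{(i)}$, and therefore
\[
\ker(\epsilon) \;=\; P_2 \,+\, \sum_{i=1}^{e} I_e^{(i)} \;=\; \mathrm{KP}(I_e),
\]
the right equality being a direct restatement of the KP algorithm: the Part 2 subspace together with the Part 1 transforms of the elements of $I_e$, one transform per choice of central variable. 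Since $J_e = \ker(\epsilon)$ by definition, summing over $e \le d$ proves the conjecture.

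The main obstacle is the structural lemma of paragraph two. Although it is clear in each of the low-degree worked examples of the paper, a uniform proof for arbitrary arity $n$ and arbitrary degree $e \ge 2n-1$ requires controlling how the substitution consequences of the length-$(2n{-}1)$ Part 2 identities propagate through arbitrarily nested $\omega$-trees, and establishing confluence of the induced rewriting system --- in the spirit of Bergman's Diamond Lemma, as already invoked in the paper for PBW-type results. A secondary but necessary clarification concerns the precise reading of $\mathrm{KP}(I_e)$ in the statement: for the soundness inclusion $\mathrm{KP}_d(\omega) \subseteq J_d$ to hold in all degrees $e$, $\mathrm{KP}(I_e)$ must be interpreted to include all multilinear substitution consequences of the Part 2 identities that land in $B_e$, not merely the direct Part 1 images of elements of $I_e$.
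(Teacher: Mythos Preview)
The statement you are attempting to prove is presented in the paper as a \emph{conjecture}, not a theorem; the paper offers no proof, and the only remark following it notes that ``significant progress toward a proof of this conjecture has recently been announced by Kolesnikov and Voronin.'' So there is no paper proof against which to compare your proposal.

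On its own merits, your outline identifies the right architecture: the center-tracking property of the BSO operations, the decomposition of $B_e$ modulo the Part~2 consequences into copies of $A_e$ indexed by the central leaf, and the factorization of $\epsilon$ on each summand through the associative expansion followed by the injective hatting map. This is the natural strategy, and your identification of the equality $\ker(\epsilon) = P_2 + \sum_i I_e^{(i)}$ as the target is exactly right.

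However, as you yourself acknowledge, the proposal is not a proof: the structural lemma asserting $B_e/P_2 \cong \bigoplus_i A_e^{(i)}$ is stated with intuition but not established. The confluence argument you sketch (normalizing subscripts off the central path via Part~2 rewrites) is plausible, but you have not verified the overlaps, nor shown that the substitution-closure of the Part~2 identities in degree $e$ suffices to identify \emph{all} monomials in $B_e$ sharing the same central path and the same underlying $A_e$-shape. Your secondary point about the interpretation of $\mathrm{KP}(I_e)$ is well taken and necessary: without closing Part~2 under substitution into higher degrees, the inclusion $\mathrm{KP}_d(\omega) \subseteq J_d$ is not even well posed degree by degree. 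In short, you have a credible proof \emph{plan} with the key lemma left open --- which is consistent with the paper's own assessment that the result was, at the time of writing, still conjectural.
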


The conjecture states that these two processes give the same results when
the group algebra $\mathbb{F} S_d$ is semisimple:
  \begin{itemize}
  \item
  Find the multilinear polynomial identities satisfied by $\omega$, and apply the KP algorithm.
  \item
  Apply the BSO algorithm, and find the multilinear polynomial identities satisfied by
  $\widehat{\omega}_1,\dots,\widehat{\omega}_n$.
  \end{itemize}
The conjecture is equivalent to the commutativity of this diagram:
\[
\begin{CD}
\omega @>\text{BSO}>> \widehat{\omega}_1,\dots,\widehat{\omega}_n
\\
@VVV @VVV
\\
I_d(\omega) @>\text{KP}>>
\begin{array}{c}
J_d(\widehat{\omega}_1,\dots,\widehat{\omega}_n)
\\
\stackrel{?}{=} \; \mathrm{KP}_d(\omega)
\\
\end{array}
\end{CD}
\]
The vertical arrows indicate the process of determining the multilinear polynomial identities satisfied
by the given operations.

\begin{remark}
Significant progress toward a proof of this conjecture has recently been announced by Kolesnikov and Voronin
\cite{KolesnikovVoronin}.
\end{remark}


\section{Open Problems}

In this final section we list some open problems related to generalizing
well-known varieties of algebras to the setting of dialgebras.

The next step beyond Lie and Malcev algebras leads to the notion of Bol algebras.
Just as Lie algebras (respectively Malcev algebras) can be defined by the polynomial
identities satisfied by the commutator in every associative algebra (respectively
alternative algebra), so also Bol algebras can be defined by the identities
satisfied by the commutator and associator in every right alternative algebra;
see P\'erez-Izquierdo \cite{PIBol}, Hentzel and Peresi \cite{HP}.  One can find
the defining identities for right alternative dialgebras by an application of the
KP algorithm, and then use computer algebra to determine the identities satisfied
by the dicommutator and the left, right, and inner associators in every right
alternative dialgebra.  On the other hand, one can apply the KP algorithm to the
defining identities for Bol algebras.  Are these two sets of identities equivalent?

Beyond Bol algebras, one obtains structures with binary, ternary and quaternary
operations, which are closely related to the tangent algebras of monoassociative loops;
see Bremner and Madariaga \cite{BM}.  These structures can be defined by the
identities satisfied by the commutator, associator and quaternator
$\langle a,b,c,d \rangle = (ab,c,d) - (a,c,d)b - a(b,c,d)$
in every power associative algebra.  What is the dialgebra analogue of these structures?

The tangent algebras of analytic loops have binary and ternary operations, which
correspond to the commutator and associator in a free nonassociative algebra;
these operations are related by the Akivis identity:
  \allowdisplaybreaks
  \begin{align*}
  &
  [[a,b],c] + [[b,c],a] + [[c,a],b] \equiv
  \\
  &
  (a,b,c) - (a,c,b) - (b,a,c) + (b,c,a) + (c,a,b) + (c,b,a).
  \end{align*}
To obtain the correct generalization of Lie's third theorem to an arbitrary
analytic loop, one must consider the infinite family of multilinear operations
whose polynomial identities define the variety of Sabinin algebras.  The basic
references on Akivis and Sabinin algebras are P\'erez-Izquierdo \cite{PISabinin},
Shestakov and Umirbaev \cite{SU}.
What can one say about Akivis dialgebras and Sabinin dialgebras?

In a different direction, a generalization of dialgebras to structures with three associative operations
has been considered by Loday and Ronco \cite{LodayRonco}; see also Casas \cite{Casas}.
It would be interesting to generalize the KP algorithm to the setting of trialgebras:
that is, for any variety of nonassociative structures, give a functorial definition of
the corresponding variety of trialgebras.  For the case of binary operations, see recent work of 
Gubarev and Kolesnikov \cite{GubarevKolesnikov2}.
One can also consider the application of the KP algorithm
to the variety of associative dialgebras: this would produce a variety of structures
with four associative operations satisfying various identities.
This procedure can clearly be iterated
$n$ times to produce a variety of structures with $2^n$ associative operations related
by certain natural identities.


\end{document}